\title{\sc Generalized Electro-Magneto Statics in Nonsmooth Exterior Domains}
\def\shorttitle{Generalized Electro-Magneto Statics}
\def\pauthor{Dirk Pauly}
\def\mylabelonoff{off}
\def\allowdisbrk{no}
\author{{\sf\pauthor}}
\markboth{\pauthor}{\shorttitle}
\numberwithin{equation}{section}
\newenvironment{acknow}{{\vspace*{1cm}\noindent\bf Acknowledgements }}{}
\newcommand{\bewboxw}{\mbox{}\hfill $\square$ \\}
\newenvironment{proof}{{\noindent\bf Proof }}{\bewboxw}
\newcommand{\keywords}[1]{{\noindent\bf Key Words }#1}
\newcommand{\amsclass}[1]{{\noindent\bf AMS MSC-Classifications }#1}
\newcommand{\mylabel}[1]{\label{#1}\fbox{{\rm #1}}}}{\newcommand{\mylabel}[1]{\label{#1}\makebox[0mm][]{}}}
\newcommand{\paper}[7]{\bibitem{#1} #2, `#3', {\it #4}, #5, (#6), #7.}
\newcommand{\dissavail}[6]{\bibitem{#1} #2, `#3', {\sf Dissertation}, #4, (#5), available from {\tt #6}.}
\newcommand{\schluss}{\ifodd\value{page}\newpage\thispagestyle{empty}\makebox[0mm][]{}\color{sehrhell}.\fi

\newcommand{\xgn}{\gamma_n}
\newcommand{\xxrq}{\cR^q}

\newcommand{\paulydissregtheoaussen}{\cite[Satz 3.6]{paulydiss}}
\newcommand{\paulydissregtheoaussenzwei}{\cite[Satz 3.7]{paulydiss}}
\newcommand{\paulydisstopiso}{\cite[Satz 6.10]{paulydiss}}
\newcommand{\paulydissregcoraussen}{\cite[Korollar 3.8]{paulydiss}}
\newcommand{\paulydissabschnittsechsfuenf}{\cite[Abschnitt 6.5]{paulydiss}}

\newcommand{\paulytimeharmpartint}{\cite[(2.5)]{paulytimeharm}}
\newcommand{\paulytimeharmdefeps}{\cite[Definition 2.1 and 2.2]{paulytimeharm}}
\newcommand{\paulytimeharmcomrotdiv}{\cite[(2.10)]{paulytimeharm}}
\newcommand{\paulytimeharmmlcpdef}{\cite[Definition 2.4]{paulytimeharm}}
\newcommand{\paulytimeharmetadef}{\cite[(2.1), (2.2), (2.3)]{paulytimeharm}}
\newcommand{\paulytimeharmepsdef}{\cite[(2.6)]{paulytimeharm}}
\newcommand{\paulytimeharmlzzerl}{\cite[(2.7)]{paulytimeharm}}
\newcommand{\paulytimeharmsecboundary}{\cite[section 6]{paulytimeharm}}

\begin{document}

\date{2006}
\maketitle{}

\begin{abstract}
We develop a solution theory for a generalized electro-magneto static Maxwell system in
an exterior domain $\Omega\subset\rN$ with anisotropic coefficients converging at infinity with a rate
$r^{-\tau}$\,, $\tau>0$\,, towards the identity. Our main goal is to treat right hand side data from some
polynomially weighted Sobolev spaces and obtain solutions which are up to a finite sum of special generalized spherical harmonics
in another appropriately weighted Sobolev space.
As a byproduct we prove a generalized spherical harmonics expansion suited for Maxwell equations.
In particular, our solution theory will allow us to give meaning to higher powers of a special static solution operator.
Finally we show, how this weighted static solution theory can be extended to handle inhomogeneous boundary data as well.
This paper is the second one in a series of three papers, which will completely reveal the low frequency behavior
of solutions of the time-harmonic Maxwell equations.\\
\keywords{exterior boundary value problems, Maxwell's equations,
variable coefficients, electro-magnetic theory, electro-magneto statics,
spherical harmonics expansion, harmonic Dirichlet fields, inhomogeneous boundary data}\\
\amsclass{35Q60, 78A25, 78A30}
\end{abstract}

\tableofcontents

\section{Introduction}

In the bounded domain case it is just an easy exercise to show that the solution operator for the time-harmonic
Maxwell equations $\loesom$ is approximated by Neumann's series of the corresponding electro-magneto
static solution operator $\loes$ for small frequencies $\omega$\,, i.e.
$$\loesom=\omega^\me\Pi+\sum_{j=0}^\infty\omega^j\loes^{j+1}\Pi_{\reg}\qquad,$$
where $\Pi$ and $\Pi_{\reg}$ are projections onto irrotational and solenoidal fields.
In the case of an exterior domain (a domain with compact complement) this low
frequency asymptotic holds no longer true. We run into problems even if we formally want to
define higher powers of a static solution operator since the well known electro-magneto static solution theory developed
e.g. by Picard in \cite{potential,boundaryelectro,decomposition} treats data from a
polynomially weighted Sobolev space and yields solutions in a less weighted Sobolev space. In particular
in \cite{potential,boundaryelectro} we get from $\lz$-data $\Lz{-1}$-solutions
by decomposing $\lz$ into subspaces consisting of irrotational resp. solenoidal fields.
\big(Here for $s\in\rz$ we denote by $\Lzs$ the Hilbert space of all measurable fields $E$\,,
for which $\rho^sE$ is square integrable, where $\rho:=(1+r^2)^{1/2}$ and $r$ is the Euclidean norm in $\rN$\,.
Many of our notations have been previously used in \cite{paulytimeharm}.
For more details and the exact definitions we refer to this paper.\big)
In \cite{decomposition} we obtain from $\Lz{1}$-data $\lz$-solutions by a second order approach and elliptization of the
Maxwell system using Lax-Milgram's theorem. (The latter paper considers a more general non-linear case using a theorem
suited for monotone operators, but in the linear case it is just the Lax-Milgram theorem.)
Based on these known results we can only consider first and second powers of the solution operator of the static Maxwell system.
To overcome these limitations we have to develop an electro-magneto static solution theory,
which deals with arbitrarily weighted data and
describes the solutions in terms of their integrability properties, such that we are able to iterate this
static solution operator depending on the integrability of the data and therefore define a generalized Neumann
sum of the static solution operator.

In the case of Helmholtz' equation and the equations of linear elasticity theory in
an exterior domain $\Omega\subset\rN$\,, where comparable integrability problems for the static problem occur,
Weck and Witsch, \cite{complete} and \cite{linelae,linelaz}, respectively, have shown
that the time-harmonic solution operator is still approximated by a (generalized) Neumann-type expansion in terms of
the corresponding generalized static solution operator for low frequencies except for
some additional degenerate correction operators.
In \cite{complete} they discussed the case $N=3$ and
in \cite{linelae,linelaz} the case of odd space dimensions $N$\,.
For even dimensions $N$ some technical complications arise due to the appearance of logarithmic terms in
the Hankel function of integer order. So in even dimensions the results still hold true but the complexity of notations
and calculations increases considerably. For $N=2$ (the most complicated case!) Peter showed in \cite{peter} how to
do this for Helmholtz' equation.

So we may expect and will show in a forthcoming paper that a similar low frequency asymptotic holds true for
Maxwell equations in an exterior domain, i.e. for small frequencies $\omega$ and $J\in\nzn$ we will prove
$$\loesom-\omega^\me\Pi-\sum_{j=0}^{J-1}\omega^j\loes^{j+1}\Pi_{\reg}
-\sum_{j=0}^{J-N}\omega^{j+N-1}\Gamma_j=\calO\big(|\omega|^J\big)$$
with projections $\Pi$ and $\Pi_{\reg}$ onto irrotational and solenoidal fields as well as
some degenerate correction operators $\Gamma_j$ in the operator topology of weighted Sobolev spaces.

Motivated by these considerations and following Hermann Weyl \cite{weyl} we want to discuss in this paper
the generalized electro-magneto static Maxwell system
\begin{equation}
\rot E=G\qqtext{,}\pdiv\eps E=f\qqtext{,}\iota^* E=0\mylabel{staticMaxsystem}
\end{equation}
in an exterior domain $\Omega\subset\rN$ using alternating differential forms. Here the `electric field' $E$
is a differential form of rank $q$ ($q$-form) and the data $G$ and $f$ are $(q+1)$- resp. $(q-1)$-forms.
To invoke suggestively the applicational background of the electro-magneto statics, it has become customary to denote
the exterior derivative $\pd$\, by $\rot$ and the co-differential $\delta$ by $\pdiv$\,. Thus we have on $q$-forms
$$\pdiv=(-1)^{(q-1)N}*\rot*\qquad.$$
Here $*$ is the Hodge star-operator. Furthermore, $\eps$ is a linear transformation acting on $q$-forms,
$\iota:\p\Omega\hookrightarrow\ol{\Omega}$ the natural embedding and $\iota^*$ the pull-back map of $\iota$\,.
So $\iota^* E$ can be considered as the restriction of the form $E$ to the $(N-1)$-dimensional Riemannian submanifold $\p\om$\,, the boundary.

In classical terms, i.e. $N=3$\,, $q=1$\,, identifying $1$- and $2$-forms with vector fields
(via the Riesz representation theorem and the star-operator) and $0$- and $3$-forms with scalar functions,
the system \eqref{staticMaxsystem} reads
\begin{equation}
\curl E=G\qqtext{,}\pdiv\eps E=f\qqtext{,}\restr{\nu\times E}{\p\Omega}=0\qquad,\mylabel{Estatic}
\end{equation}
where $\curl=\nabla\,\times$ resp. $\pdiv=\nabla\,\cdot$ is the classical rotation resp. divergence
and $\nu$ the outward unit normal at the boundary $\p\Omega$\,. Here we denote by $\nabla$ the classical gradient
and by $\times$ the vector-product in $\rd$\,. So in this case we get the classical electro static system
for the electric field with prescribed tangential component at the boundary.
By the vanishing tangential component of $E$ at the boundary we model total reflection of the electric field at the oundary,
i.e. the complement $\rd\ohne\Omega$ is a perfect conductor. Now physically $f$ is the charge density,
$\eps$ the dielectricity of the medium $\Omega$\,, $\eps E$ the displacement current and $G=0$\,.

Setting $N=3$ and $q=2$ another classical case appears. In this case $H:=\eps E$ is the magnetic field,
$\mu:=\eps^\me$ the permeability of our medium, $\mu H$ the magnetic induction, $f$ the current and $G=0$\,.
Now the system \eqref{staticMaxsystem}, i.e.
$$\rot\mu H=G\qqtext{,}\pdiv H=f\qqtext{,}\iota^*\mu H=0\qquad,$$
turns into (using the classical language)
\begin{equation}
\pdiv\mu H=G\qqtext{,}-\curl H=f\qqtext{,}\restr{\nu\cdot\mu H}{\p\Omega}=0\qquad.\mylabel{Hstatic}
\end{equation}
Thus we obtain the classical magneto static system of a perfect conductor corresponding to \eqref{Estatic}.

We will show in this paper that the static Maxwell problem \eqref{staticMaxsystem} has a solution for data
taken from a closed subspace of $\qLzom{q-1}{s}\times\Lzqpeom{s}$ with weights $s>1-N/2$\,.
Here for $s\in\rz$ we denote by $\Lzqsom$ the Hilbert space of all measurable $q$-forms $E$\,,
for which $\rho^s E$ is square integrable over $\om$\,, i.e.
$$\skp{E}{E}_{\Lzqsom}:=\intom\rho^{2s}E\wedge*\bar{E}<\infty\qquad.$$
(Here $\wedge$ is the exterior product and $\,\bar{\cdot}\,$ denotes complex conjugation.)

Because of the existence of a non-trivial $\lz$-kernel of \eqref{staticMaxsystem},
the harmonic Dirichlet forms $\dhqepsom{}$\,, our solution is unique, if we impose some adequate
orthogonality constraints on it. We receive solutions, which lie in the naturally expected weighted
Sobolev space $\Lzqom{s-1}$ except of a finite sum of special generalized spherical harmonics.
To obtain our results we consider linear, bounded, symmetric and uniformly positive definite transformations $\eps$\,,
such that the perturbations $\epsd:=\eps-\id$ are $\pc{1}$
in the outside of an arbitrary compact set and decay with order $\tau>0$\,, i.e.
$$\p^\alpha\epsd=\calO(r^{-|\alpha|-\tau})\qqtext{as}r\to\infty\qqtext{for all}|\alpha|\leq1\qquad.$$
Depending on the weight $s$ we have to adjust the order of decay monotone increasing.

A solution theory for the static system \eqref{staticMaxsystem} has been given
by Kress \cite{kress} and Picard \cite{potential} for homogeneous, isotropic media, i.e. $\eps=\id$\,, and by Picard
\cite{decomposition} for inhomogeneous, anisotropic media. (Here $\eps$ is even allowed to be a
non-linear transformation as mentioned above.) Moreover, in the classical cases of electro- and magneto-statics
Picard \cite{boundaryelectro} and Milani and Picard \cite{milani} developed a solution theory
for inhomogeneous, anisotropic media. In these papers the data are taken from some
closed subspaces of $\lz$ or $\Lz{1}$\,. This means in our notation that until now solution theories
for the special cases of weights $s=0$ and $s=1$ are known.

Keeping in mind that we eventually want to be able to define a generalized Neumann-type expansion,
our immediate goal is to construct a special static solution operator $\loes$
associated with the electro-magneto static Maxwell system
\begin{align*}
\rot E&=G&&,&\pdiv\eps E&=0&&,&\iota^* E=0&&,\\
\pdiv H&=F&&,&\rot\mu H&=0&&,&\iota^*\mu H=0&&,
\end{align*}
which maps data $\FG$ from the closed subspace
$$\big(\dqsnom\cap\dhqom^\bot\big)\times\big(\ronqpesnom\cap\dhqpeom^\bot\big)$$
of $\Lzqsom\times\Lzqpesom$ to the forms $(\eps E,\mu H)$ from
$$\big(\eps\ronqtom\times\mu\dqpetom\big)
\cap\Big(\big(\dqtnom\cap\dhqom^\bot\big)\times\big(\ronqpetnom\cap\dhqpeom^\bot\big)\Big)$$
with some $t\leq s-1$ and $t<N/2$\,. The main tool for this is the construction of
`towers' of special homogeneous (iterated) static solutions in the whole space.
Here for $t\in\rz$ we introduce the weighted Sobolev spaces suited for Maxwell equations
\begin{align*}
\rqtom&:=\setb{E\in\Lzqtom}{\rot E\in\Lzqpeom{t+1}}\qquad,\\
\dqpetom&:=\setb{H\in\Lzqpetom}{\pdiv H\in\Lzqom{t+1}}
\end{align*}
equipped with their canonical norms. Furthermore, we generalize the homogeneous boundary condition in $\ronqtom$\,,
the closure of $\cqun(\Omega)$ in the $\rqtom$-norm. The symbol $\bot$ stands for orthogonality
with respect to the $\lz$-scalar product.
A subscript $0$ at the lower left corner
indicates vanishing rotation resp. divergence. If $\Omega=\rN$ we omit the dependence on the domain.
All these spaces are Hilbert spaces.

In a final section we deal with inhomogeneous boundary conditions. Using a new result from Weck \cite{wecklip},
which allows to define traces of $q$-forms on domains with Lipschitz-boundaries, we discuss the static problem
$$\rot E=G\qqtext{,}\pdiv\eps E=f\qqtext{,}\iota^*E=\lambda\qquad.$$
It turns out that the solution theory for this problem is an easy consequence of the results for homogeneous boundary conditions
and the existence of an adequate extension operator for our traces.

In this paper we follow closely the ideas of \cite{complete} and \cite{linelaz}.
We note that dual results can easily be obtained utilizing the Hodge star-operator, but for sake of brevity
we shall refrain from stating those results explicitly. Moreover, to decrease the complexity of this paper we only deal
with \ul{odd} space dimensions $N\geq3$ to avoid logarithmic terms as described above. We mention that many results hold for even
dimensions $N$ as well. Essentially we need the assumption $N$ odd only to construct our towers and these again are used to iterate our
solution operators. So the static solution theory still remains valid, if $N$ is even.

Throughout this paper we use the notations from \cite{paulytimeharm} resp. \cite{paulydiss} as mentioned above.
(Especially in this paper the index `loc' assigned to spaces is always to be understood
in the sense of $\omq$\,. Moreover, the index `vox' denotes compact supports.)

Essentially the present paper is the second part of the author's doctoral thesis, \cite{paulydiss}.
For sake of brevity, however, some proofs are merely sketched or completely omitted.
For more details and some additional results the interested reader is referred to \cite{paulydiss}.

This paper is the second one in a series of three papers having the aim to determine the low
frequency asymptotic of the time-harmonic Maxwell equations completely.
In the first paper \cite{paulytimeharm} we discussed the time-harmonic solution operator and showed its convergence
to a static solution operator as the frequency tends to zero.
With the present paper we provide the means to define higher powers of the static solution operators in suitably
weighted Sobolev-type spaces. In the forthcoming final paper of the series we shall then develop
a generalized Neumann-type expansion to fully analyze the low-frequency behavior of time-harmonic solutions
of Maxwell's equations.

\section{Towers of static solutions in the whole space}

In this section we consider the homogeneous and isotropic whole space case, i.e.
$$\Omega=\rN\qqtext{,}\eps=\id\qquad,$$
and assume $3\leq N\in\nz$ to be \ul{odd}.\mylabel{sectiontower}
Our aim is to provide a generalized spherical harmonics expansion for differential forms.
We use the spherical calculus (and its notations) developed by Weck and Witsch in \cite{sphharm}
(a technique to use polar-coordinates for $q$-forms) to construct towers of homogeneous differential forms
$$\dqksmpm\qqtext{and}\rqksmpm\qqtext{in}\cqu\big(\rN\ohne\{0\}\big)$$
solving the following system for $k\in\nzn$:
\begin{align}
\rot\dqnsmpm&=0&&,&\pdiv\rqpensmpm&=0\mylabel{Turmeins}\\
\pdiv\dqksmpm&=0&&,&\rot\rqpeksmpm&=0\mylabel{Turmzwei}\\
\rot\dqksmpm&=\rqpekmesmpm&&,&\pdiv\rqpeksmpm&=\dqkmesmpm\mylabel{Turmdrei}
\end{align}
These towers coincide in some sense with the eigenforms
$S^q_{\sigma,m}$\,, $T^q_{\sigma,m}$ of the Laplace-Beltrami operator on the unit sphere $\SN\subset\rN$\,,
which establish a complete orthonormal system in $\lzq(\SN)$ and solve the Maxwell eigenvalue system
$$\Rot T^q_{\sigma,m}=\ie\omega^q_\sigma\cdot S^{q+1}_{\sigma,m}\qqtext{,}\Div S^{q+1}_{\sigma,m}=\ie\omega^q_\sigma\cdot T^q_{\sigma,m}\qquad,$$
where $\omega^{q-1}_\sigma:=(q+\sigma)^\peh\cdot(q'+\sigma)^\peh$ with $q':=N-q$\,.
Here $\Rot$ and $\Div$ denote the exterior derivative and co-derivative on the unit sphere.

For the construction of these towers we use the operators $\rho$\,, $\tau$ and their right inverses
$\rhoh$\,, $\tauh$ introduced in \cite{sphharm}, intensively.
These towers have already been defined and discussed in \cite[p. 1503]{linelaz}.
For our purposes we have to study them more thoroughly. For $k,\sigma\in\nzn$ we let
$$\homg{k}{\sigma}{\pm}:=\begin{cases}k+\sigma&,\quad\pm=+\\k-\sigma-N&,\quad\pm=-\end{cases}$$
and with $\mu^q_\sigma:=\mu^{q,N}_\sigma$ from \cite[p. 1029, Theorem 1 (iii)]{sphharm} we introduce

\begin{defini}\mylabel{turmdef}
Let $q\in\{0,\dots,N\}$\,, $k,\sigma\in\nzn$ and $m\in\{1,\dots,\mu^q_\sigma\}$\,. Then we define `tower-forms' by
\begin{align*}
\turmd{q}{2k}{\sigma}{m}{\pm}&:={}^\pm\alpha^{q,k}_\sigma\cdot r^{\homg{2k}{\sigma}{\pm}}\cdot\big(-\ie\omega^{q-1}_{\sigma}\rhoh T^{q-1}_{\sigma,m}+(q'+\homg{2k}{\sigma}{\pm})\,\tauh S^q_{\sigma,m}\big)\qquad,\\
\turmd{q-1}{2k+1}{\sigma}{m}{\pm}&:={}^\pm\alpha^{q,k}_\sigma\cdot r^{\homg{2k+1}{\sigma}{\pm}}\cdot\tauh T^{q-1}_{\sigma,m}\qquad,\\
\turmr{q}{2k}{\sigma}{m}{\pm}&:={}^\pm\alpha^{q,k}_\sigma\cdot r^{\homg{2k}{\sigma}{\pm}}\cdot\big((q+\homg{2k}{\sigma}{\pm})\,\rhoh T^{q-1}_{\sigma,m}+\ie\omega^{q-1}_{\sigma}\tauh S^q_{\sigma,m}\big)\qquad,\\
\turmr{q+1}{2k+1}{\sigma}{m}{\pm}&:={}^\pm\alpha^{q,k}_\sigma\cdot r^{\homg{2k+1}{\sigma}{\pm}}\cdot\rhoh S^q_{\sigma,m}\qquad.
\end{align*}
The coefficients satisfy the recursion
$${}^\pm\alpha^{q,k}_\sigma:=\frac{{}^\pm\alpha^{q,k-1}_\sigma}{2k\cdot(2k\pm2\sigma\pm N)}\qtext{,}{}^-\alpha^{q,0}_\sigma:=1\qtext{,}{}^+\alpha^{q,0}_\sigma:=\frac{(-1)^{1+\delta_{q,0}+\delta_{q,N}}}{2\sigma+N}\quad.$$
Moreover, we collect all these indices in an index $I:=(\sgn,k,\sigma,m)$ taken from the set
$\{\pm\}\times\nzn\times\nzn\times\nz$ and define the notation
$$D^q_I:=\turmd{q}{k}{\sigma}{m}{\sgn}\qqtext{and}R^q_I:=\turmr{q}{k}{\sigma}{m}{\sgn}\qquad.$$
Here we call $\s(I):=\sgn=\pm$ the `sign', $\he(I):=k\in\nzn$ the `height', $\ei(I):=\sigma\in\nzn$ the `eigenvalue index'
and $\ci(I):=m\in\nz$ the `counting index' of a tower-q-form $D^q_I$ or $R^q_I$\,.
Furthermore, we define the `homogeneity degree' of a tower-form by
$$\homd(D^q_I):=\homd(R^q_I):=\homg{}{I}{}:=\homg{\he(I)}{\ei(I)}{\s(I)}\qquad.$$
Finally we define the upper bound of the counting index
$$\mu^{q,k}_{\sigma}:=\begin{cases}\mu^q_\sigma&,\,k\text{ even}\\\mu^{q+1}_\sigma&,\,k\text{ odd}\end{cases}$$
and the two index sets
\begin{align*}
\cIq&:=\setb{I}{\s(I)\in\{+,-\}\,\wedge\,\he(I),\ei(I)\in\nzn\,\wedge\,1\leq\ci(I)\leq\mu^{q,\he(I)}_{\ei(I)}}\quad,\\
\cJq&:=\setb{J}{\s(J)\in\{+,-\}\,\wedge\,\he(J),\ei(J)\in\nzn\,\wedge\,1\leq\ci(J)\leq\mu^{q-1,\he(J)+1}_{\ei(J)}}\quad.
\end{align*}
\end{defini}

\begin{rem}\mylabel{TurmBemNull}
\begin{itemize}
\item[\rm\bf (i)] The recursion of the coefficients is well defined because $N$ is odd. Thus our tower-forms
are well defined. For even dimensions the recursion is also well defined for tower-forms with
positive sign and for tower-forms with negative sign as long as $k<N/2$\,. (In the last case we would
have to work with logarithmic terms of the radius $r$ for higher $k$\,.)
Therefore for even dimensions $N\geq4$ all tower-forms with negative sign up to heights three are well defined.
\item[\rm\bf (ii)] The tower-forms $D^q_I$ and $R^q_I$ are elements of $\cqu\big(\rN\ohne\{0\}\big)$\,,
homogeneous of degree $\homg{}{I}{}$ and solutions of the system \eqref{Turmeins}-\eqref{Turmdrei} in $\rN\ohne\{0\}$\,.
\item[\rm\bf (iii)] An index $I$ resp. $J$ of a tower-form $D^q_I$ resp. $R^q_J$ belongs to the index set $\cIq$ resp. $\cJq$\,.
\item[\rm\bf (iv)] The elements of the countable set of tower-forms
\begin{align*}
&\setb{D^q_I,R^q_J}{I\in\cIq,J\in\cJq\,\wedge\,\he(J)\geq1}\\
=\,&\setb{D^q_I,R^q_J}{I\in\cIq,J\in\cJq\,\wedge\,\he(I)\geq1}
\end{align*}
are linear independent.
\item[\rm\bf (v)] From the defining recursion of the coefficients we get the following explicit formulas:
\begin{align*}
{}^+\alpha^{q,k}_\sigma&=\frac{\Gamma(1+N/2+\sigma)}{4^k\cdot k!\cdot\Gamma(k+1+N/2+\sigma)}\cdot\frac{(-1)^{1+\delta_{q,0}+\delta_{q,N}}}{2\sigma+N}\\
{}^-\alpha^{q,k}_\sigma&=\frac{\Gamma(1-N/2-\sigma)}{4^k\cdot k!\cdot\Gamma(k+1-N/2-\sigma)}
\end{align*}
Here $\Gamma$ denotes the gamma-function.
The coefficients ${}^\pm\alpha^{q,k}_\sigma$ converge rapidly to zero as $k\to\infty$\,.
Thus for $0<a\leq b<\infty$ the tower-forms $D^q_I$ and $R^q_I$\,, $I=(\sgn,k,\sigma,m)$\,, together with all their
derivatives and even after multiplication with arbitrary powers of $r$ are uniformly bounded
with respect to $a\leq|x|\leq b$ and $k,\sigma,m\in\nzn$\,.
\item[\rm\bf (vi)] The definitions of the tower-forms in Definition \ref{turmdef} have to be understood
in the sense that all not defined terms are defined to be zero.
Thus only for the ranks $q\in\{1,\dots,N-1\}$ no problems
occur and we get `regular' tower-forms. In the extreme cases $q\in\{0,N\}$\,, where we have only
the index $(\sigma,m)=(0,1)$\,, the only tower-forms are
$\turmd{0}{2k}{0}{1}{\pm}$\,, $\turmr{N}{2k}{0}{1}{\pm}$\,, $\turmd{N-1}{2k+1}{0}{1}{\pm}$ and
$\turmr{1}{2k+1}{0}{1}{\pm}$\,. We note in this cases
\begin{align*}
\turmd{0}{0}{0}{1}{-}&=0&&,&\turmr{N}{0}{0}{1}{-}&=0&&,\\
\turmd{0}{0}{0}{1}{+}&\in\Lin\{\Eins\}&&,&\turmr{N}{0}{0}{1}{+}&\in\Lin\{*\Eins\}&&.
\end{align*}
\end{itemize}
\end{rem}

\begin{rem}\mylabel{TurmBemZwei}
Because of $\Delta=\rot\pdiv+\pdiv\rot$ (Here the Laplacian $\Delta$ acts on each Euclidean component of the differential form.)
all tower-forms $D^q_I,R^q_J$ of heights less or equal to one satisfy
$$\Delta D^q_I=\Delta R^q_J=0\qquad.$$
Therefore comparing these tower-forms with the potential forms discussed in \cite{sphharm} we obtain for $q\in\{1,\dots,N-1\}$
\begin{align*}
\turmd{q}{0}{\sigma}{m}{-}&=-(q+\sigma)^\peh(2\sigma+N)^\peh\cdot Q^{q,3}_{\sigma+2,m}&&,&\turmd{q-1}{1}{\sigma}{m}{-}&=Q^{q-1,2}_{\sigma+1,m}&&,\\
\turmr{q}{0}{\sigma}{m}{-}&=\ie(q'+\sigma)^\peh(2\sigma+N)^\peh\cdot Q^{q,3}_{\sigma+2,m}&&,&\turmr{q+1}{1}{\sigma}{m}{-}&=Q^{q+1,1}_{\sigma+1,m}&&,\\
\turmd{q}{0}{\sigma}{m}{+}&=\ie(q'+\sigma)^\peh(2\sigma+N)^\meh\cdot P^{q,4}_{\sigma,m}&&,&\turmd{q-1}{1}{\sigma}{m}{+}&=\frac{-1}{2\sigma+N}\cdot P^{q-1,2}_{\sigma+1,m}&&,\\
\turmr{q}{0}{\sigma}{m}{+}&=-(q+\sigma)^\peh(2\sigma+N)^\meh\cdot P^{q,4}_{\sigma,m}&&,&\turmr{q+1}{1}{\sigma}{m}{+}&=\frac{-1}{2\sigma+N}\cdot P^{q+1,1}_{\sigma+1,m}&&.
\end{align*}
In particular the tower-forms $\turmd{q}{0}{\sigma}{m}{-}$ and $\turmr{q}{0}{\sigma}{m}{-}$
resp. $\turmd{q}{0}{\sigma}{m}{+}$ and $\turmr{q}{0}{\sigma}{m}{+}$ are linear dependent, which we will
indicate by the symbol $\cong$\,. In detail we have
$$\turmd{q}{0}{\sigma}{m}{-}=\vartheta^q_\sigma\cdot\turmr{q}{0}{\sigma}{m}{-}\qqtext{,}
\turmr{q}{0}{\sigma}{m}{+}=\vartheta^q_\sigma\cdot\turmd{q}{0}{\sigma}{m}{+}\qquad,$$
where $\vartheta^q_\sigma:=\ie(q+\sigma)^{\frac{1}{2}}(q'+\sigma)^\meh$\,.
Furthermore, the potential forms $Q^{q,4}_{\sigma,m}$ resp. $P^{q,3}_{\sigma+2,m}$
are linear combinations of the tower-forms $\turmd{q}{2}{\sigma}{m}{-}$ and $\turmr{q}{2}{\sigma}{m}{-}$
resp. $\turmd{q}{2}{\sigma}{m}{+}$ and $\turmr{q}{2}{\sigma}{m}{+}$\,, i.e.
\begin{align*}
\frac{(2\sigma+N)^\peh}{2-2\sigma-N}Q^{q,4}_{\sigma,m}&=(q+\sigma)^\peh\turmr{q}{2}{\sigma}{m}{-}+\ie(q'+\sigma)^\peh\turmd{q}{2}{\sigma}{m}{-}&&,\\
\ie\frac{(2\sigma+N)^\meh}{2+2\sigma+N}P^{q,3}_{\sigma+2,m}&=(q'+\sigma)^\peh\turmr{q}{2}{\sigma}{m}{+}-\ie(q+\sigma)^\peh\turmd{q}{2}{\sigma}{m}{+}&&.
\end{align*}
For $q\in\{0,N\}$ we see
\begin{align*}
\turmd{0}{2}{0}{1}{-}&=\frac{-\ie}{2-N}\cdot Q^{0,4}_{0,1}&&,&\turmd{N-1}{1}{0}{1}{-}&=Q^{N-1,2}_{1,1}&&,\\
\turmr{N}{2}{0}{1}{-}&=\frac{1}{2-N}\cdot Q^{N,4}_{0,1}&&,&\turmr{1}{1}{0}{1}{-}&=Q^{1,1}_{1,1}&&,\\
\turmd{0}{0}{0}{1}{+}&=-\ie\cdot P^{0,4}_{0,1}&&,&\turmd{N-1}{1}{0}{1}{+}&=\frac{1}{N}\cdot P^{N-1,2}_{1,1}&&,\\
\turmr{N}{0}{0}{1}{+}&=P^{N,4}_{0,1}&&,&\turmr{1}{1}{0}{1}{+}&=\frac{1}{N}\cdot P^{1,1}_{1,1}&&.
\end{align*}
\end{rem}

The following picture explains the denotation `tower':
$$\begin{array}{|l||ccccccc|}
\dots&&&\dots&\Big|&\dots&&\\
&&\pdiv\swarrow&&\Big|&&\searrow\rot&\\
\text{\rm 3. floor}&{}^{\pm}D^{q-1,3}_{\sigma,m}&&&\Big|&&&{}^{\pm}R^{q+1,3}_{\sigma,m}\\
&&\rot\searrow&&\Big|&&\swarrow\pdiv&\\
\text{\rm 2. floor}&&&{}^{\pm}R^{q,2}_{\sigma,m}&\Big|&{}^{\pm}D^{q,2}_{\sigma,m}&&\\
&&\pdiv\swarrow&&\Big|&&\searrow\rot&\\
\text{\rm 1. floor}&{}^{\pm}D^{q-1,1}_{\sigma,m}&&&\Big|&&&{}^{\pm}R^{q+1,1}_{\sigma,m}\\
&&\rot\searrow&&\Big|&&\swarrow\pdiv&\\
\text{\rm ground}&&&{}^{\pm}R^{q,0}_{\sigma,m}&\cong&{}^{\pm}D^{q,0}_{\sigma,m}&&\\
&&&&\big|&&&\\
\hline\hline
&\multicolumn{3}{c}{\text{\sf rotation-tower}}&\Big|&\multicolumn{3}{c|}{\text{\sf divergence-tower}}\\
\hline
\end{array}$$

\begin{rem}\mylabel{TurmBemFuenf}
For an index $I\in\cIq$ resp. $J\in\cJq$ with \ul{odd} height we get $\rho D^q_I=0$ resp. $\tau R^q_J=0$
since $\rho\tauh=0$ resp. $\tau\rhoh=0$\,. Thus $TD^q_I=0$ resp. $RR^q_J=0$
with the operators $R=r\pd r\wedge\,=x_n\pd x^n\wedge\,$ and $T=\pm*R*$ from \cite{sphharm}.
Because of \eqref{Turmzwei} and with the commutator formula $C_{\pdiv,\varphi(r)}=\varphi'(r)r^\me T$ resp. $C_{\rot,\varphi(r)}=\varphi'(r)r^\me R$
\big(see e.g. \cite{sphharm} or {\paulytimeharmcomrotdiv}\big) we obtain
$$\pdiv\big(\varphi(r)D^q_I\big)=0\qqtext{resp.}\rot\big(\varphi(r)R^q_J\big)=0$$
for any $\varphi\in\pc{1}(\rz)$\,.
\end{rem}

To shorten the formulas we write for $\cI\subset\cIq$ resp. $\cJ\subset\cJq$
$$\calD^q(\cI):=\Lin\set{D^q_I}{I\in\cI}\qqtext{resp.}\calR^q(\cJ):=\Lin\set{R^q_J}{J\in\cJ}$$
\big(with the convention $\calD^q(\emptyset):=\{0\}$ resp. $\calR^q(\emptyset):=\{0\}$\big). For $s\in\rz$ let
$$\cI_s:=\setb{I\in\cI}{D^q_I\notin\Lzqs(A_1)}\qtext{,}
\cJ_s:=\setb{J\in\cJ}{R^q_J\notin\Lzqs(A_1)}\quad.$$
Furthermore, for $s\in\rz$ and $k,K\in\nzn$ we present the index sets
\begin{align*}
\pcIq{k}{}&:=\setb{I\in\cIq}{\he(I)=k}&&,&\pcIq{\leq K}{}&:=\bigcup_{k=0}^K\pcIq{k}{}&&,\\
\pcIq{k}{s}&:=\big(\pcIq{k}{}\big)_s&&,&\pcIq{\leq K}{s}&:=\bigcup_{k=0}^K\pcIq{k}{s}&&,\\
\pcIbq{k}{}&:=\setb{I\in\pcIq{k}{}}{\s(I)=-}&&,&\pcIbq{\leq K}{}&:=\bigcup_{k=0}^K\pcIbq{k}{}&&,\\
\pcIbq{k}{s}&:=\big(\pcIbq{k}{}\big)_s&&,&\pcIbq{\leq K}{s}&:=\bigcup_{k=0}^K\pcIbq{k}{s}
\end{align*}
and replacing $\cI$ by $\cJ$ similar index sets for $\cJ$\,.
Moreover, we introduce for indices
$$I:=(\sgn,k,\sigma,m)\in\cIq\qtext{resp.}J:=(\sgn,k,\sigma,m)\in\cJqpe$$
the negative indices
$$-I:=(-\sgn,k,\sigma,m)\in\cIq\qtext{resp.}-J:=(-\sgn,k,\sigma,m)\in\cJqpe$$
and with $j\in\zz$ for the shifted indices the notation
\begin{align*}
{}_jI&:=(\sgn,k+j,\sigma,m)\in\begin{cases}\cJqpe&\,,\,j\text{ odd}\\\cIq&\,,\,j\text{ even}\end{cases}
\intertext{resp.}
{}_jJ&:=(\sgn,k+j,\sigma,m)\in\begin{cases}\cIq&\,,\,j\text{ odd}\\\cJqpe&\,,\,j\text{ even}\end{cases}\qquad.
\intertext{For subsets $\cI$ resp. $\cJ$ of $\cIq$ resp. $\cJqpe$ we set}
\cIj&:=\set{{}_jI}{I\in\cI}\subset\begin{cases}\cJqpe&\,,\,j\text{ odd}\\\cIq&\,,\,j\text{ even}\end{cases}
\intertext{resp.}
\cJj&:=\set{{}_jJ}{J\in\cJ}\subset\begin{cases}\cIq&\,,\,j\text{ odd}\\\cJqpe&\,,\,j\text{ even}\end{cases}\qquad.
\end{align*}
With these definitions we then have
\begin{align*}
\rot D^q_{{}_1J}&=R^{q+1}_J&&,&\pdiv R^{q+1}_{{}_1I}&=D^q_I&&,\\
\rot D^q_I&=R^{q+1}_{{}_{-1}I}&&,&\pdiv R^{q+1}_J&=D^q_{{}_{-1}J}&&.
\end{align*}

\begin{rem}\mylabel{intbarkeitTuerme}
Let $m\in\nzn$ and $I\in\cIq$\,. Since our tower-forms are smooth and homogeneous we have for $s\in\rz$
\begin{align*}
&&D^q_I&\in\Lzqs(A_1)&&\equi&D^q_I&\in\hqms(A_1)\\
&\equi&\homg{}{I}{}&<-s-N/2&&\equi&s&<-\s(I)\big(\ei(I)+N/2\big)-\he(I)\qquad.
\end{align*}
If in particular $I\in\pcIbq{k}{}$\,, then $D^q_I$ is an element of $\hqms(A_1)$\,, if and only if
$$\ei(I)>s+k-N/2\qquad.$$
Thus for $k\in\nzn$ we can characterize our special index sets by
\begin{align*}
\pcIbq{k}{s}&=\setb{I\in\pcIbq{k}{}}{\ei(I)\leq s+k-N/2}\qquad,\\
\pcIbq{\leq k}{s}&=\setb{I\in\pcIbq{\leq k}{}}{\ei(I)\leq s+\he(I)-N/2}\qquad.
\end{align*}
We note that $\calD^q(\pcIbq{k}{s})=\calD^q(\pcIbq{\leq k}{s})=\{0\}$\,, if and only if $s<N/2-k$\,.
Thus for $s\geq N/2-k$ the spaces $\calD^q(\pcIbq{k}{s})$ and $\calD^q(\pcIbq{\leq k}{s})$ are subspaces of
$\qh{m}{q}{<\Nh-k}{}(A_1)$ but by definition even not of $\Lzqs(A_1)$\,.
Clearly all these assertions also hold true for tower-forms $R^q_J$ with $J\in\cJq$\,.
\end{rem}

Let us introduce the `matrix'-differential operator
\begin{equation}M:=\zmat{0}{\pdiv}{\rot}{0}\mylabel{Mdef}\end{equation}
acting on pairs of $(q,q+1)$-forms $\EH$ by
$$M\EH:=(\pdiv H,\rot E)\qquad.$$
Now we are able to prove the main result of this section, a generalized spherical harmonics expansion
suited for Maxwell equations. To this end we have to define for $K\in\nz$ and $s\in\rz$ some `exceptional' forms:
\begin{align}
\hat{D}^{q,K}&:=
\begin{cases}
\vspace{1mm}
\turmd{0}{K}{0}{1}{-}&\,,\,q=0\,\wedge\,K\text{ even}\\
\vspace{1mm}
\turmr{1}{1}{0}{1}{-}&\,,\,q=1\\
\vspace{1mm}
\turmd{N-1}{K}{0}{1}{-}&\,,\,q=N-1\,\wedge\,K\text{ odd}\\
0&\,,\,\text{otherwise}
\end{cases}\mylabel{ausnahmeformeins}\\
&\non\\
\hat{R}^{q+1,K}&:=
\begin{cases}
\vspace{1mm}
\turmr{1}{K}{0}{1}{-}&\,,\,q=0\,\wedge\,K\text{ odd}\\
\vspace{1mm}
\turmd{N-1}{1}{0}{1}{-}&\,,\,q=N-2\\
\vspace{1mm}
\turmr{N}{K}{0}{1}{-}&\,,\,q=N-1\,\wedge\,K\text{ even}\\
0&\,,\,\text{otherwise}
\end{cases}\mylabel{ausnahmeformzwei}\\
&\non\\
\hat{D}^{q,K}_s&:=
\begin{cases}
\vspace{1mm}
\turmd{0}{K}{0}{1}{-}&\,,\,q=0\,\wedge\,K\text{ even }\wedge\,s<N/2-K\\
\vspace{1mm}
\turmr{1}{1}{0}{1}{-}&\,,\,q=1\,\wedge\,s<N/2-1\\
\vspace{1mm}
\turmd{N-1}{K}{0}{1}{-}&\,,\,q=N-1\,\wedge\,K\text{ odd }\wedge\,s<N/2-K\\
0&\,,\,\text{otherwise}
\end{cases}\mylabel{ausnahmeformdrei}\\
&\non\\
\hat{R}^{q+1,K}_s&:=
\begin{cases}
\vspace{1mm}
\turmr{1}{K}{0}{1}{-}&\,,\,q=0\,\wedge\,K\text{ odd }\wedge\,s<N/2-K\\
\vspace{1mm}
\turmd{N-1}{1}{0}{1}{-}&\,,\,q=N-2\,\wedge\,s<N/2-1\\
\vspace{1mm}
\turmr{N}{K}{0}{1}{-}&\,,\,q=N-1\,\wedge\,K\text{ even }\wedge\,s<N/2-K\\
0&\,,\,\text{otherwise}
\end{cases}\mylabel{ausnahmeformvier}
\end{align}

\begin{theo}\mylabel{entwicklungssatzganzraum}
With $K\in\nz$ and $0\leq\tilde{r}<\bar{r}\leq\infty$ let $\EH$ denote a solution of the `iterated' Maxwell system
$$M^K\EH=(0,0)\qqtext{and}\pdiv E=0\qqtext{,}\rot H=0$$
in $Z_{\tilde{r},\bar{r}}$\,. Then $\EH\in\cqu(Z_{\tilde{r},\bar{r}})\times\cqpeu(Z_{\tilde{r},\bar{r}})$
and in $Z_{\tilde{r},\bar{r}}$ the representations
\begin{align}
\hspace{1cm}E&=\sum_{\substack{I\in\pcIq{\leq K-1}{}}}&e^{q,K}_I&\cdot D^q_I&&+&\hat{e}^{q,K}&\cdot\hat{D}^{q,K}\qquad,\hspace{1cm}\mylabel{entwe}\\
H&=\sum_{\substack{J\in\pcJqpe{\leq K-1}{}}}&h^{q+1,K}_J&\cdot R^{q+1}_J&&+&\hat{h}^{q+1,K}&\cdot\hat{R}^{q+1,K}\mylabel{entwz}
\end{align}
hold with unique constants $e^{q,K}_{\,\cdot\,}$\,, $h^{q+1,K}_{\,\cdot\,}\in\cz$ and $\hat{e}^{q,K}$\,, $\hat{h}^{q+1,K}\in\cz$\,,
provided that the exceptional forms do not vanish.

These series converge in $\cu(Z_{\tilde{r},\bar{r}})$\,,
i.e. uniformly together with all their derivatives in compact subsets of $Z_{\tilde{r},\bar{r}}$\,.

In the case $0<\tilde{r}<\bar{r}=\infty$ we have with some $s\in\rz$
$$\EH\in\hqms(A_{\tilde{r}})\times\hqpems(A_{\tilde{r}})\qqtext{for all}m\in\nzn\qquad,$$
if and only if all coefficients $e^{q,K}_I$ and $h^{q+1,K}_J$ with $\homg{}{I}{},\homg{}{J}{}\geq-s-N/2$ vanish.
This holds true, if and only if $\he(I)+\ei(I),\he(J)+\ei(J)\geq-s-N/2$ for indices $I,J$ with positive sign and
$\he(I)-\ei(I),\he(J)-\ei(J)\geq-s+N/2$ for indices $I,J$ with negative sign.
Then the series converge for all $\hat{r}>\tilde{r}$ uniformly together with all derivatives
even after multiplication with arbitrary powers of $r$ in $A_{\hat{r}}$\,.
Thus in particular they converge in $\hms(A_{\hat{r}})$\,.

Especially for $s\geq-N/2$ there appear only tower-forms with negative sign. In this case \eqref{entwe} and \eqref{entwz} turn to
\begin{align*}
\hspace{1cm}E&=\sum_{\substack{I\in\pcIbq{\leq K-1}{}\ohne\pcIbq{\leq K-1}{s}}}&e^{q,K}_I&\cdot D^q_I&&+&\hat{e}^{q,K}&\cdot\hat{D}^{q,K}_s&&,\\
H&=\sum_{\substack{J\in\pcJbqpe{\leq K-1}{}\ohne\pcJbqpe{\leq K-1}{s}}}&h^{q+1,K}_J&\cdot R^{q+1}_J&&+&\hat{h}^{q+1,K}&\cdot\hat{R}^{q+1,K}_s&&.\hspace{1cm}
\end{align*}
\end{theo}

\begin{proof}
The smoothness of $\EH$ follows by the regularity result {\paulydissregtheoaussen}.
Remark \ref{intbarkeitTuerme} yields the integrability properties of each single term in the stated expansion.
Concerning the mode of convergence we refer to \cite[p. 1033]{sphharm} and \cite[p. 1508, Theorem 1]{linelaz},
where similar expansions have been discussed. In particular for $\bar{r}=\infty$ the series converge in $\Lzs(A_{\hat{r}})$\,,
if and only if all terms in the expansion belong to $\Lzs(A_{\hat{r}})$\,.
Thus we only have to show the representation formulas \eqref{entwe} and \eqref{entwz}.

Let us look at $E$ in the case $K=1$\,. We have $\rot E=0$ and $\pdiv E=0$\,. Thus $E$ is a potential form, i.e. $\Delta E=0$\,,
and we obtain from \cite[p. 1033]{sphharm} the representation
\beq E=\sum_{k,\sigma,m}\alpha^q_{k,\sigma,m}\cdot P^{q,k}_{\sigma,m}+\sum_{k,\sigma,m}\beta^q_{k,\sigma,m}\cdot Q^{q,k}_{\sigma,m}\qtext{,}\alpha^q_{k,\sigma,m},\beta^q_{k,\sigma,m}\in\cz\quad.\mylabel{EentwicklungEins}\eeq
By testing the equation $\rot E=0$ with $\varphi(r)\rhoh T^q_{\sigma-1,m}$
for any $\varphi\in\cun\big((\tilde{r},\bar{r})\big)$\,, i.e. computing
$$0=\skpb{\rot E}{\varphi(r)\rhoh T^q_{\sigma-1,m}}_{\Lzqpe{}}$$
with partial integration and \eqref{EentwicklungEins}, we see
$\alpha^q_{2,\sigma,m}=\beta^q_{2,\sigma,m}=0$ except of $\beta^{N-1}_{2,1,1}$\,.
Testing with $\varphi(r)\rhoh S^q_{\sigma-2,m}$ yields $\alpha^q_{3,\sigma,m}=\beta^q_{4,\sigma-2,m}=0$\,.
Analogously we obtain from the equation $\pdiv E=0$ by testing with $\varphi(r)\rhoh T^{q-2}_{\sigma-1,m}$\, that
$\alpha^q_{1,\sigma,m}=\beta^q_{1,\sigma,m}$ must vanish except of $\beta^{1}_{1,1,1}$\,.
Finally only
$$E=\sum_{\sigma,m}\alpha^q_{4,\sigma,m}\cdot P^{q,4}_{\sigma,m}+\sum_{\sigma,m}\beta^q_{3,\sigma,m}\cdot Q^{q,3}_{\sigma,m}
+\begin{cases}
\vspace{1mm}
\beta^{1}_{1,1,1}\cdot Q^{1,1}_{1,1}&\,,\,q=1\\
\vspace{1mm}
\beta^{N-1}_{2,1,1}\cdot Q^{N-1,2}_{1,1}&\,,\,q=N-1\\
0&\,,\,\text{otherwise}
\end{cases}$$
remains from \eqref{EentwicklungEins}.
With Remark \ref{TurmBemZwei} we can replace these potential forms by our tower-forms and we receive the asserted representation.
Because $H$ solves the same system as $E$ (replacing $q$ by $q+1$) we obtain the representation for $H$ in the case $K=1$ as well.

Assuming now that our representations hold for some $K\geq1$\,,
we consider $E$ solving the system $M^{K+1}(E,0)=(0,0)$ and $\pdiv E=0$\,. 
Then the form $H:=\rot E$ satisfies
$M^K(0,H)=(0,0)$ and $\rot H=0$\,. Our assumptions for $K$ yield
$$H=\sum_{\substack{J\in\pcJqpe{\leq K-1}{}}}h^{q+1,K}_J\cdot R^{q+1}_J+\hat{h}^{q+1,K}\cdot\hat{R}^{q+1,K}$$
and with the ansatz
$$\tilde{E}:=\sum_{\substack{J\in\pcJqpe{\leq K-1}{}}}h^{q+1,K}_J\cdot D^q_{{}_1J}
+\hat{h}^{q+1,K}\cdot
\begin{cases}
\vspace{1mm}
\turmd{0}{K+1}{0}{1}{-}&\,,\,q=0\,\wedge\,K+1\text{ even}\\
\vspace{1mm}
\turmd{N-1}{K+1}{0}{1}{-}&\,,\,q=N-1\,\wedge\,K+1\text{ odd}\\
0&\,,\,\text{otherwise}
\end{cases}$$
we see that $e:=E-\tilde{E}$ solves the system
$$\pdiv e=0\qqtext{,}\rot e=\hat{h}^{q+1,K}\cdot\begin{cases}\turmd{N-1}{1}{0}{1}{-}&\,,\,q=N-2\\0&\,,\,\text{otherwise}\end{cases}\qquad.$$
If $q\neq N-2$ the conclusion for $K=1$ gives
$$e=\sum_{\substack{I\in\pcIq{0}{}}}e^{q,1}_I\cdot D^q_I+\hat{e}^{q,1}\cdot\hat{D}^{q,1}$$
and thus
\begin{align*}
E=e+\tilde{E}&=\sum_{\substack{I\in\cIq\,,\\1\leq \he(I)\leq K}}h^{q+1,K}_{{}_{-1}I}\cdot D^q_I
+\sum_{\substack{I\in\pcIq{0}{}}}e^{q,1}_I\cdot D^q_I\\
&\\
&\qquad\qquad+
\begin{cases}
\vspace{1mm}
\hat{h}^{q+1,K}\cdot\turmd{0}{K+1}{0}{1}{-}&\,,\,q=0\,\wedge\,K+1\text{ even}\\
\vspace{1mm}
\hat{e}^{q,1}\cdot\turmr{1}{1}{0}{1}{-}&\,,\,q=1\\
\vspace{1mm}
\hat{e}^{q,1}\cdot\turmd{N-1}{1}{0}{1}{-}&\,,\,q=N-1\\
\vspace{1mm}
\hat{h}^{q+1,K}\cdot\turmd{N-1}{K+1}{0}{1}{-}&\,,\,q=N-1\,\wedge\,K+1\text{ odd}\\
0&\,,\,\text{otherwise}
\end{cases}\quad,
\end{align*}
which had to be shown. If $q=N-2$ we only have
$$\pdiv e=0\qqtext{and}\Delta e=0\qquad.$$
Following the arguments used in the case $K=1$ we obtain for $e$ the expansion \eqref{EentwicklungEins} and reduce this representation
similarly with the additional information $\pdiv e=0$ to
$$e=\sum_{\substack{k=2,4\,,\\\sigma,m}}\alpha^{N-2}_{k,\sigma,m}\cdot P^{N-2,k}_{\sigma,m}+\sum_{\substack{k=2,3\,,\\\sigma,m}}\beta^{N-2}_{k,\sigma,m}\cdot Q^{N-2,k}_{\sigma,m}
+\begin{cases}
\vspace{1mm}
\beta^{1}_{1,1,1}\cdot Q^{1,1}_{1,1}&\,,\,q=1\\
0&\,,\,\text{otherwise}
\end{cases}\quad.$$
Remark \ref{TurmBemZwei} yields with new constants
$$e=\sum_{\substack{I\in\pcIq{\leq1}{}}}e^{N-2,2}_I\cdot D^{N-2}_I
+\hat{e}^{1,2}\cdot\begin{cases}
\vspace{1mm}
\turmr{1}{1}{0}{1}{-}&\,,\,q=1\\
0&\,,\,\text{otherwise}
\end{cases}\qquad.$$
Analogously we show the representation for $H$\,. Our proof is complete.
\end{proof}

\section{Electro-magneto static operators}

From now on we want to discuss the inhomogeneous, anisotropic (generalized) static Maxwell equations in an exterior domain
$$\Omega\subset\rN\qqtext{,}3\leq N\in\nz\qquad,$$
and fix a radius $r_0>0$ and some radii $r_n:=2^nr_0$\,, $n\in\nz$\,, such that
$$\rN\ohne\Omega\subset U_{r_0}\qquad.$$
We assume $\Omega$ to possess the `Maxwell local compactness property' {\sf MLCP} \big(see {\paulytimeharmmlcpdef}\big), i.e. the inclusions
$$\Ronqom{}\cap\Dqom{}\hookrightarrow\Lzqloc(\omq)$$
are compact for all $q$\,. Furthermore, we remind of the cut-off functions
$\mbox{\boldmath$\eta$}$\,, $\hat{\eta}$ and $\eta$ from {\paulytimeharmetadef}.
Let $\eps=\id+\epsd$ and $\mu=\id+\mud$ be two $\tau$-$\pc{1}$-admissible \big(see \paulytimeharmdefeps\big)
transformations on $q$- and $(q+1)$-forms with some $\tau\geq0$\,, which will vary throughout this paper.
The greek letter $\tau$ always stands for the order of decay of the perturbations $\epsd$ and $\mud$\,.

Moreover, we introduce the Dirichlet forms defined in {\paulytimeharmepsdef}
$$\dhqepsom{t}=\ronqnom{t}\cap\eps^\me\dqnom{t}\qqtext{,}t\in\rz$$
and obtain our first result on the integrability of Dirichlet forms:\mylabel{statopweight}

\begin{lem}\mylabel{intdirichletid}
$\dH{q}{-\Nh}{}(\Omega)=\dhqom=\dH{q}{<\Nh-1}{}(\Omega)$ and even $\dhqom=\dH{q}{<\Nh}{}(\Omega)$ holds,
if $q\notin\{1,N-1\}$\,.
\end{lem}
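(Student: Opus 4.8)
The plan is to split the statement into the \emph{trivial} (reverse) inclusions, coming from monotonicity of the weighted spaces, and the \emph{nontrivial} (forward) inclusions, which I obtain from the whole-space expansion of Theorem \ref{entwicklungssatzganzraum} applied near infinity. Since $\rho\geq1$, a larger weight exponent defines a smaller space, and because $N\geq3$ gives $-\Nh<0<\Nh-1<\Nh$, monotonicity yields for free the chain
$$\dH{q}{<\Nh}{}(\Omega)\subseteq\dH{q}{<\Nh-1}{}(\Omega)\subseteq\dhqom\subseteq\dH{q}{-\Nh}{}(\Omega)\quad.$$
Hence everything reduces to proving $\dH{q}{-\Nh}{}(\Omega)\subseteq\dH{q}{<\Nh-1}{}(\Omega)$ for arbitrary $q$, and the sharper $\dH{q}{-\Nh}{}(\Omega)\subseteq\dH{q}{<\Nh}{}(\Omega)$ when $q\notin\{1,N-1\}$. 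In words: a Dirichlet field whose only a priori integrability is the weak weight $-\Nh$ is automatically much better behaved at infinity.

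So fix $E\in\dH{q}{-\Nh}{}(\Omega)$, i.e. $E\in\Lzqom{-\Nh}$ with $\rot E=0$, $\pdiv E=0$ (recall $\eps=\id$) and the electric boundary condition. As $\rN\ohne\Omega\subset U_{r_0}$, the regularity result \paulydissregtheoaussen shows that $E$ is smooth on $A_{\tilde r}$ for every $\tilde r>r_0$ and solves $\rot E=0$, $\pdiv E=0$ there. Equivalently the pair $(E,0)$ satisfies $M(E,0)=(0,0)$ together with $\pdiv E=0$ and $\rot 0=0$, so Theorem \ref{entwicklungssatzganzraum} applies with $K=1$ and gives, on $A_{\tilde r}$,
$$E=\sum_{I\in\pcIq{0}{}}e^{q,1}_I\cdot D^q_I+\hat e^{q,1}\cdot\hat D^{q,1}\quad.$$

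Next I exploit the weight to prune this expansion. The ground-floor forms ($\he(I)=0$) have homogeneity degree $\homg{0}{\ei(I)}{+}=\ei(I)\geq0$ for positive sign and $\homg{0}{\ei(I)}{-}=-\ei(I)-N<0$ for negative sign. By Remark \ref{intbarkeitTuerme} a single tower-form of homogeneity degree $d$ is square integrable with weight $t$ near infinity exactly when $d<-t-\Nh$; since the expansion of $E$ is a \emph{finite} sum of linearly independent such forms, membership of the restriction of $E$ to $A_{\tilde r}$ in weight $-\Nh$ forces every coefficient of degree $\geq0$ to vanish. Thus only negative-sign ground-forms and the negative-sign exceptional form survive. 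Reading off their decay from the same criterion: a surviving negative-sign ground-form has degree $-\ei(I)-N$ and is therefore admissible for all $t<\ei(I)+\Nh$, in particular for all $t<\Nh$ (borderline $\ei(I)=0$). The exceptional form $\hat D^{q,1}$ is nonzero only for $q\in\{1,N-1\}$, where it equals $\turmr{1}{1}{0}{1}{-}$ respectively $\turmd{N-1}{1}{0}{1}{-}$, both of degree $1-N$, hence admissible only for $t<\Nh-1$.

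Consequently, on $A_{\tilde r}$ the field $E$ is square integrable with weight $t$ for all $t<\Nh-1$ without restriction, and for all $t<\Nh$ as soon as $q\notin\{1,N-1\}$. Since on the bounded set $\Omega\cap U_{\tilde r}$ all weights $\rho^{t}$ are mutually comparable and $E$ is square integrable there, the same ranges of integrability pass to all of $\Omega$; taking $t=0$ in particular shows $E\in\dhqom$. Together with the trivial chain this gives all asserted equalities. The one delicate point — and exactly the reason the improvement from $<\Nh-1$ to $<\Nh$ is confined to $q\notin\{1,N-1\}$ — is the bookkeeping of the exceptional forms $\hat D^{q,1}$, whose slower decay $r^{1-N}$ (instead of $r^{-N}$) obstructs the sharper weight precisely in the ranks $q\in\{1,N-1\}$.
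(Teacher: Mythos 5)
Your proposal follows the same route as the paper's proof: apply Theorem \ref{entwicklungssatzganzraum} with $K=1$ to a Dirichlet form, prune the expansion using the weight $-\Nh$, and read off the decay of the surviving terms, with the exceptional forms $\hat{D}^{q,1}$ (nonzero exactly for $q\in\{1,N-1\}$, of homogeneity degree $1-N$) accounting for the loss from $<\Nh$ to $<\Nh-1$. The monotonicity chain, the regularity step, the homogeneity bookkeeping and the passage from the annulus $A_{\tilde r}$ to all of $\Omega$ are all correct.

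One step, however, is justified incorrectly: you claim that ``the expansion of $E$ is a \emph{finite} sum of linearly independent such forms'' and deduce from this that every coefficient of degree $\geq0$ must vanish. The height-zero expansion is an \emph{infinite} series: $\pcIq{0}{}$ ranges over all eigenvalue indices $\ei(I)=\sigma\in\nzn$ and all counting indices $1\leq\ci(I)\leq\mu^q_\sigma$, so linear independence of the terms alone does not allow any conclusion about the coefficients from $\Lzq{-\Nh}$-membership of the sum. The correct justification is supplied by Theorem \ref{entwicklungssatzganzraum} itself: in the case $\bar r=\infty$ the series converges in $\Lzs(A_{\hat{r}})$ if and only if all coefficients of terms with $\homg{}{I}{}\geq-s-N/2$ vanish (this rests on the orthogonality of the spherical harmonics underlying the expansion, not on finiteness). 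Equivalently, invoke the theorem's final specialization for $s\geq-N/2$ directly with $s=-N/2$, which is precisely what the paper does: since $\pcIbq{0}{-\Nh}=\emptyset$, it yields at once the representation of $E$ on $A_{r_0}$ as a sum over $\pcIbq{0}{}$ plus $\hat{e}^{q,1}\cdot\hat{D}^{q,1}_{-\Nh}$, after which your decay estimates finish the argument unchanged.
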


\begin{rem}\mylabel{intdirichletidbem}
In particular if $s>1-N/2$ then $\dhqom$ is a closed subspace of $\Lzqom{-s}$\,, the dual space of $\Lzqsom$\,.
If $q\notin\{1,N-1\}$ this remains valid for $s>-N/2$\,.
\end{rem}

\begin{proof}
Applying Theorem \ref{entwicklungssatzganzraum} with $s=-N/2$ we have for $E\in\dH{q}{-\Nh}{}(\Omega)$
$$\restr{E}{A_{r_0}}=\sum_{I\in\pcIbq{0}{}}e^{q,1}_I\cdot D^q_I+\hat{e}^{q,1}\cdot\hat{D}^{q,1}_{-\Nh}$$
because of $\pcIbq{0}{s}=\emptyset$\,. By Remark \ref{intbarkeitTuerme} the sum is an element of $\Lzqom{<\Nh}$ and
the second term, which vanishes for $q\notin\{1,N-1\}$\,, lies in $\Lzqom{<\Nh-1}$\,.
\end{proof}

One easily concludes with \paulytimeharmpartint:

\begin{cor}\mylabel{dirichletsenkrecht}
Let $s>1-N/2$\,. Then with closures taken in $\Lzqsom$
$$\ol{\rot\pr{q-1}{s-1}{}{\circ}(\Omega)}\cup\ol{\rot\pR{q-1}{s}{}{\circ}(\Omega)}\cup\ol{\pdiv\pdi{q+1}{s-1}{}{}(\Omega)}\cup\ol{\pdiv\pDi{q+1}{s}{}{}(\Omega)}\subset\dhqom^\bot$$
holds true. Here we denote by $\bot$ the orthogonality in $\Lzqom{}$\,, i.e. in the sense of the
$\Lzqsom$-$\Lzqom{-s}$ duality.
\end{cor}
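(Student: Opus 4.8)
The plan is to reduce everything to a single partial integration identity tested against harmonic Dirichlet forms. First I would observe that the hypothesis $s>1-N/2$ is precisely what makes the duality work: it is equivalent to $-s<N/2-1$, so by Lemma~\ref{intdirichletid} (which gives $\dhqom=\dH{q}{<\Nh-1}{}(\Omega)$) every $h\in\dhqom$ lies in $\Lzqom{-s}$, as already recorded in Remark~\ref{intdirichletidbem}. Hence the $\Lzqsom$-$\Lzqom{-s}$ pairing $\skp{\Phi}{h}$ is finite for all $\Phi\in\Lzqsom$ and $h\in\dhqom$ by Cauchy-Schwarz. Since $\dhqom^\bot$ is an annihilator, hence a closed subspace of $\Lzqsom$, and since the assertion is that a union of four closed sets is contained in this subspace, it suffices to show that each of the four \emph{un}closed generating sets lies in $\dhqom^\bot$; the closures are then automatic.

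Now fix $h\in\dhqom$ and recall that $h$ satisfies $\rot h=0$, $\pdiv h=0$ and the homogeneous boundary condition $\iota^*h=0$. For the two rotation terms take a $(q-1)$-form $E\in\pr{q-1}{s-1}{}{\circ}(\Omega)$ or $E\in\pR{q-1}{s}{}{\circ}(\Omega)$; in either case $\rot E\in\Lzqsom$ (this is forced by the statement itself) and $E$ carries the homogeneous boundary condition encoded by the superscript $\circ$. Applying the partial integration formula \paulytimeharmpartint with $E$ as the factor carrying the boundary condition and the dual weights $s-1$ and $-s$ yields $\skp{\rot E}{h}=\skp{E}{\pdiv h}=0$, the boundary term at $\p\Omega$ dropping out because $E$ lies in the closure of $\cqun(\Omega)$. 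This settles the first two terms.

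For the two divergence terms take a $(q+1)$-form $H\in\pdi{q+1}{s-1}{}{}(\Omega)$ or $H\in\pDi{q+1}{s}{}{}(\Omega)$, so that $\pdiv H\in\Lzqsom$ but now $H$ carries no boundary condition. The trick is to run \paulytimeharmpartint in the opposite direction, using $h$ itself as the factor carrying the homogeneous boundary condition: since $h\in\ronqnom{}$ and $h\in\Lzqom{-s}$, the formula gives $\skp{\rot h}{H}=\skp{h}{\pdiv H}$, so $\skp{h}{\pdiv H}=0$ because $\rot h=0$, and therefore $\skp{\pdiv H}{h}=0$. Thus all four generating sets lie in $\dhqom^\bot$.

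I expect the only genuine point of care to be the weight bookkeeping that simultaneously legitimizes the duality pairing and suppresses the contribution from infinity. The decay $h\in\Lzqom{-s}$ secured by $s>1-N/2$ is exactly what allows \paulytimeharmpartint to be applied with the matched weights $s-1$ and $-s$; once the formula applies, the vanishing of each pairing follows instantly from $\rot h=0$, $\pdiv h=0$ and the homogeneous boundary condition carried by whichever of the two factors possesses it. No deeper obstacle arises, which is why the corollary follows by a direct application of \paulytimeharmpartint.
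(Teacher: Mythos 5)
Your proposal is correct and coincides with the paper's intended argument: the paper proves this corollary with the single remark that it follows from the partial integration formula {\paulytimeharmpartint}, the pairing being legitimate because Lemma \ref{intdirichletid} and Remark \ref{intdirichletidbem} place $\dhqom$ in $\Lzqom{-s}$. Your two-directional application (the test form carries the boundary condition for the rotation terms, the Dirichlet form itself for the divergence terms) is exactly the intended reading; note only that when $s<0$ it is the full strength of Lemma \ref{intdirichletid} — equality of the Dirichlet spaces across weights, hence the boundary condition in the weight $-s$ sense, not merely integrability — that licenses the second application.
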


Let us now closely follow the constructions in \cite[p. 1631]{complete} or \cite[p. 1511]{linelaz}.
For some $t\in\rz$ we consider vector spaces of the form
$$U^q_t(\Omega):=V^q_t(\Omega)+\eta\calD^q(\cI)\qqtext{,}\cI\subset\cIq$$
with some Hilbert spaces $V^q_t(\Omega)\subset\Lzqtom$\,, e.g. $V^q_t(\Omega)=\ronqtom\cap\eps^\me\dqtom$\,.
Because of the smoothness and integrability properties of our tower-forms we have
$$U^q_t(\Omega)=V^q_t(\Omega)\dotplus\eta\calD^q(\cI_t)\qquad\qqtext{($\dotplus$\,: direct sum)}\qquad.$$
We define an inner product in $U^q_t(\Omega)$\,, such that
\begin{itemize}
\item\quad in $V^q_t(\Omega)$ the original scalar product is kept;
\item\quad $\eta\calD^q(\cI_t)=\set{\eta D^q_I}{I\in\cI_t}$ is an orthonormal system;
\item\quad the sum $V^q_t(\Omega)\dotplus\eta\calD^q(\cI_t)=V^q_t(\Omega)\boxplus\eta\calD^q(\cI_t)$ is orthogonal.
\end{itemize}
Clearly we extend these definitions replacing $\eta\calD^q(\cI)$ by $\eta\calR^q(\cJ)$\,.
$U^q_t(\Omega)$ is a Hilbert space, if and only if $\cI_t$ is finite, and
independent of the cut-off function $\eta$ in the following sense:
If $\xi$ is another cut-off function with the same properties, then the two Hilbert spaces
$V^q_t(\Omega)\dotplus\xi\calD^q(\cI_t)$ and $V^q_t(\Omega)\dotplus\eta\calD^q(\cI_t)$
have different scalar products but coincide as sets and the identity mapping is a topological isomorphism between them
with norm depending on $\eta$ and $\xi$\,.
\big(We note $E+\eta T=E+(\eta-\xi)T+\xi T$ and $\supp(\eta-\xi)$ is a compact subset of $\om$\,.\big)

We introduce the special forms
$$\check{D}^{q,K}_s\qqtext{and}\check{R}^{q+1,K}_s$$
replacing $<$ by $\geq$ in the definitions of $\hat{D}^{q,K}_s$ and $\hat{R}^{q+1,K}_s$
in \eqref{ausnahmeformdrei} and \eqref{ausnahmeformvier}. In particular we have for $K=1$
\begin{align*}
\check{D}^{q,1}_s&=\check{R}^{q,1}_s=
\begin{cases}
\vspace{1mm}
\turmr{1}{1}{0}{1}{-}&\,,\,q=1\,\wedge\,s\geq N/2-1\\
\vspace{1mm}
\turmd{N-1}{1}{0}{1}{-}&\,,\,q=N-1\,\wedge\,s\geq N/2-1\\
0&\,,\,\text{otherwise}
\end{cases}
\intertext{and for $K=2$}
\check{D}^{q,2}_s&=
\begin{cases}
\vspace{1mm}
\turmd{0}{2}{0}{1}{-}&\,,\,q=0\,\wedge\,s\geq N/2-2\\
\vspace{1mm}
\turmr{1}{1}{0}{1}{-}&\,,\,q=1\,\wedge\,s\geq N/2-1\\
0&\,,\,\text{otherwise}
\end{cases}\qquad,\\
&\\
\check{R}^{q+1,2}_s&=
\begin{cases}
\vspace{1mm}
\turmd{N-1}{1}{0}{1}{-}&\,,\,q=N-2\,\wedge\,s\geq N/2-1\\
\vspace{1mm}
\turmr{N}{2}{0}{1}{-}&\,,\,q=N-1\,\wedge\,s\geq N/2-2\\
0&\,,\,\text{otherwise}
\end{cases}\qquad.
\end{align*}
Moreover, we set $\check{\calD}^{q,K}_s:=\Lin\check{D}^{q,K}_s$ and $\check{\calR}^{q+1,K}_s:=\Lin\check{R}^{q+1,K}_s$\,.

We need one more technical lemma:

\begin{lem}\mylabel{deltaNullzerleg}
Let $\hat{r}\geq r_0$\,, $s\geq-N/2$ and $E\in\Lzqom{-\Nh}$ be a solution of $\Delta E=0$ in $A_{\hat{r}}$\,.
Then $E$ is represented in $A_{\hat{r}}$ by
$$E-\sum_{\substack{k,\sigma,m\,,\\\sigma>2+s-N/2}}\beta_{k,\sigma,m}\cdot Q^{q,k}_{\sigma,m}$$
$$=\begin{cases}
\quad\bds\sum_{J\in\pcJbq{\leq1}{s}}\eds h^{q,2}_J\cdot R^q_J+\check{h}^{q,2}\cdot\check{R}^{q,2}_s&\text{, if }\rot E\in\Lzqpe{s+1}(A_{\hat{r}})\\
\quad\bds\sum_{I\in\pcIbq{\leq1}{s}}\eds e^{q,2}_I\cdot D^q_I+\check{e}^{q,2}\cdot\check{D}^{q,2}_s&\text{, if }\pdiv E\in\qLz{q-1}{s+1}(A_{\hat{r}})\\
\quad\bds\sum_{I\in\pcIbq{0}{s}}\eds e^{q,1}_I\cdot D^q_I+\check{e}^{q,1}\cdot\check{D}^{q,1}_s&\text{, if }\rot E\in\Lzqpe{s+1}(A_{\hat{r}})\\
&\quad\text{and }\pdiv E\in\qLz{q-1}{s+1}(A_{\hat{r}})\\
\bds\sum_{\substack{k,\sigma,m\,,\\\sigma\leq2+s-N/2}}\eds \beta_{k,\sigma,m}\cdot Q^{q,k}_{\sigma,m}&\text{, otherwise}
\end{cases}$$
with unique constants $\beta_{k,\sigma,m},e^{q,1}_I,e^{q,2}_I,h^{q,2}_J\in\cz$ and
$\check{e}^{q,1},\check{e}^{q,2},\check{h}^{q,2}\in\cz$\,, provided that the exceptional forms do not vanish.
Thus
$$E\in\Lzqsom\boxplus\begin{cases}
\quad\eta\calR^q(\pcJbq{\leq1}{s})\boxplus\eta\check{\calR}^{q,2}_s&\text{, if }\rot E\in\Lzqpe{s+1}(A_{\hat{r}})\\
\quad\eta\calD^q(\pcIbq{\leq1}{s})\boxplus\eta\check{\calD}^{q,2}_s&\text{, if }\pdiv E\in\qLz{q-1}{s+1}(A_{\hat{r}})\\
\quad\eta\calD^q(\pcIbq{0}{s})\boxplus\eta\check{\calD}^{q,1}_s&\text{, if }\rot E\in\Lzqpe{s+1}(A_{\hat{r}})\\
&\quad\text{and }\pdiv E\in\qLz{q-1}{s+1}(A_{\hat{r}})
\end{cases}$$
holds. We note $\eta\calD^q(\pcIbq{0}{s})\boxplus\eta\check{\calD}^{q,1}_s=\eta\calR^q(\pcJbq{0}{s})\boxplus\eta\check{\calR}^{q,1}_s$\,.
\end{lem}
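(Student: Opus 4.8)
The plan is to reduce everything to the harmonic expansion of $E$ on the exterior region and then peel off the integrable part using the tower structure. First I would invoke the regularity result \paulydissregtheoaussen{} to get that $E$ is smooth in $A_{\hat{r}}$, so that the potential-form expansion from \cite{sphharm} applies to the harmonic form $E$:
$$E=\sum_{k,\sigma,m}\alpha^q_{k,\sigma,m}\cdot P^{q,k}_{\sigma,m}+\sum_{k,\sigma,m}\beta^q_{k,\sigma,m}\cdot Q^{q,k}_{\sigma,m}\qquad\text{ in }A_{\hat{r}}.$$
As recalled in the proof of Theorem \ref{entwicklungssatzganzraum}, for $\bar r=\infty$ such a series lies in a weighted space $\Lzqs(A_{\hat{r}})$ if and only if every single term does. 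Reading off the tower--potential dictionary of Remark \ref{TurmBemZwei}, all $P$-forms are positive-sign (growing) with homogeneity degree $\geq0$, whereas all $Q$-forms are negative-sign (decaying) with degree $2-\sigma-N$. Hence the hypothesis $E\in\Lzqom{-\Nh}$ already forces $\alpha^q_{k,\sigma,m}=0$ for all indices, leaving $E=\sum\beta^q_{k,\sigma,m}Q^{q,k}_{\sigma,m}$; this is exactly the regime $s\geq-N/2$ of Theorem \ref{entwicklungssatzganzraum} in which only negative-sign tower-forms occur.

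Next I would split this sum at the integrability threshold. By Remark \ref{intbarkeitTuerme}, $Q^{q,k}_{\sigma,m}\in\Lzqsom$ exactly when $\sigma>2+s-N/2$, independently of $k$. The high-$\sigma$ block therefore converges in $\Lzqsom$ and may be moved to the left-hand side, while the remaining finite low-$\sigma$ block is the genuine non-integrable remainder. With no extra hypothesis on $\rot E$ or $\pdiv E$ this already settles the ``otherwise'' case, the residual being $\sum_{\sigma\leq2+s-N/2}\beta^q_{k,\sigma,m}\cdot Q^{q,k}_{\sigma,m}$.

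For the three regularity cases I would analyse this finite block through the tower. Via Remark \ref{TurmBemZwei} the four types correspond to: $Q^{q,3}$ a ground-floor form (height $0$), $Q^{q,2}$ a height-one $D$-form, $Q^{q,1}$ a height-one $R$-form, and $Q^{q,4}$ a height-two form. By Remark \ref{TurmBemFuenf} the odd-height $R$-forms are $\rot$-free and the odd-height $D$-forms are $\pdiv$-free, while the ground floor is both; the height-two forms have nonvanishing $\rot$ and $\pdiv$. A degree count shows that the nonzero image $\rot Q^{q,k}$ (degree $1-\sigma-N$) fails to lie in $\Lzqpe{s+1}$ precisely for $\sigma\leq2+s-N/2$, and symmetrically for $\pdiv$. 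Since the resulting images are distinct tower-forms, hence linearly independent by Remark \ref{TurmBemNull}, there is no cancellation: the hypothesis $\rot E\in\Lzqpe{s+1}(A_{\hat{r}})$ kills the $k=2,4$ modes, $\pdiv E\in\qLz{q-1}{s+1}(A_{\hat{r}})$ kills the $k=1,4$ modes, and both together leave only the ground-floor ($k=3$) modes. Re-expressing the survivors through Remark \ref{TurmBemZwei} gives, respectively, $\calR^q(\pcJbq{\leq1}{s})$, $\calD^q(\pcIbq{\leq1}{s})$, and $\calD^q(\pcIbq{0}{s})=\calR^q(\pcJbq{0}{s})$ (the last identity being the ground-floor relation $D\cong R$), each augmented by the appropriate exceptional form $\check{R}^{q,2}_s$, $\check{D}^{q,2}_s$, $\check{D}^{q,1}_s$. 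The stated $\boxplus$-decompositions then follow upon multiplying the surviving tower-forms by the cut-off $\eta$ (equal to one near infinity), the difference from $E$ being compactly supported and thus in $\Lzqsom$.

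I expect the main obstacle to be the bookkeeping in this last step: matching the potential-form indices $(k,\sigma,m)$ to the height, sign and eigenvalue indices of the tower-forms, verifying the degree inequalities that pin down the threshold $\sigma\leq2+s-N/2$ for both $E$ and its derivatives, and --- most delicately --- tracking the extreme ranks $q\in\{0,1,N-2,N-1\}$, where the degenerate index $(\sigma,m)=(0,1)$ produces the exceptional forms $\check{D}^{q,K}_s$ and $\check{R}^{q+1,K}_s$ that must be carried separately throughout.
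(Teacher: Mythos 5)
Your proposal is correct and follows essentially the same route as the paper's own proof: expand the harmonic form $E$ in potential forms, discard all $P$-forms using $E\in\Lzqom{-\Nh}$, split the $Q$-series at the threshold $\sigma\leq 2+s-N/2$, use the weighted integrability of $\rot E$ resp. $\pdiv E$ (together with the independence of the image forms, so no cancellation) to kill the $k=2,4$ resp. $k=1,4$ modes up to the degenerate exceptions, and finally rewrite the surviving low-$\sigma$ block as tower-forms multiplied by $\eta$, the remainder landing in $\Lzqsom$. The only differences are presentational: the paper first reduces to the case $s\geq N/2-2$ and explicitly carries the exceptional forms $Q^{0,4}_{0,1}$, $Q^{1,1}_{1,1}$, $Q^{N-1,2}_{1,1}$, $Q^{N,4}_{0,1}$ through the computation, whereas you flag this bookkeeping (correctly identifying the ranks $q\in\{0,1,N-2,N-1\}$ and the index $(\sigma,m)=(0,1)$) without executing it.
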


\begin{proof}
Similarly to \eqref{EentwicklungEins} we have
$$\restr{E}{A_{\hat{r}}}=\sum_{k,\sigma,m}\beta_{k,\sigma,m}\cdot Q^{q,k}_{\sigma,m}\in\Lzq{<\Nh-2}(A_{\hat{r}})$$
because $E\in\Lzqom{-\Nh}$ and therefore no potential forms $P^{q,k}_{\sigma,m}$ occur.
Thus only the case $s\geq N/2-2$ is interesting and may be assumed from now on. With the properties
of $Q^{q,k}_{\sigma,m}$ we obtain \big(except of the forms $Q^{0,4}_{0,1}$\,, $Q^{1,1}_{1,1}$\,, $Q^{N-1,2}_{1,1}$ and $Q^{N,4}_{0,1}$\big)
\begin{align*}
\restr{\rot E}{A_{\hat{r}}}&=\sum_{\sigma,m}\tilde{\beta}_{2,\sigma,m}\cdot Q^{q+1,3}_{\sigma+1,m}+\sum_{\sigma,m}\tilde{\beta}_{4,\sigma,m}\cdot Q^{q+1,1}_{\sigma+1,m}\qquad,\\
\restr{\pdiv E}{A_{\hat{r}}}&=\sum_{\sigma,m}\tilde{\tilde{\beta}}_{1,\sigma,m}\cdot Q^{q-1,3}_{\sigma+1,m}+\sum_{\sigma,m}\tilde{\tilde{\beta}}_{4,\sigma,m}\cdot Q^{q-1,2}_{\sigma+1,m}
\end{align*}
with new constants satisfying $\tilde{\beta}_{k,\sigma,m}=0\equi\beta_{k,\sigma,m}=0\equi\tilde{\tilde{\beta}}_{k,\sigma,m}=0$\,.
By Remarks \ref{TurmBemZwei} and \ref{intbarkeitTuerme} and
$$\rot E\in\Lzqpe{s+1}(A_{\hat{r}})\qqtext{resp.}\pdiv E\in\qLz{q-1}{s+1}(A_{\hat{r}})$$
we see that all coefficients $\beta_{k,\sigma,m}$ with $\sigma\leq2+s-N/2$ and $k=2,4$ resp. $k=1,4$ vanish
except of $\beta_{4,0,1}$ (for $q=0$), $\beta_{1,1,1}$ (for $q=1$), $\beta_{2,1,1}$ (for $q=N-1$) and $\beta_{4,0,1}$ (for $q=N$).

Let us discuss the case $\pdiv E\in\qLz{q-1}{s+1}(A_{\hat{r}})$\,.
Then we get by Remark \ref{TurmBemZwei} in $A_{\hat{r}}$
\begin{align*}
&\qquad\qquad E-\sum_{\substack{k,\sigma,m\,,\\\sigma>2+s-N/2}}\beta_{k,\sigma,m}\cdot Q^{q,k}_{\sigma,m}\\
&=\sum_{\substack{k,\sigma,m\,,\\k=2,3\,,\\\sigma\leq2+s-N/2}}\beta_{k,\sigma,m}\cdot Q^{q,k}_{\sigma,m}
+\begin{cases}
\beta_{4,0,1}\cdot Q^{0,4}_{0,1}&\,,\,q=0\\
\beta_{1,1,1}\cdot Q^{1,1}_{1,1}&\,,\,q=1\,\wedge s\geq N/2-1\,\\
0&\,,\,\text{otherwise}
\end{cases}\\
&=\quad\sum_{I\in\pcIbq{\leq1}{s}} e^{q,2}_I\cdot D^q_I+\check{e}^{q,2}\cdot\check{D}^{q,2}_s\qquad.
\end{align*}
Because $\bds\sum_{\substack{k,\sigma,m\,,\\\sigma>2+s-N/2}}\beta_{k,\sigma,m}\cdot\eta Q^{q,k}_{\sigma,m}\in\Lzqsom \eds$
we obtain
$$\eta E-\sum_{I\in\pcIbq{\leq1}{s}} e^{q,2}_I\cdot\eta D^q_I-\check{e}^{q,2}\cdot\eta\check{D}^{q,2}_s\in\Lzqsom$$
and thus $E\in\Lzqsom\boxplus\eta\calD^q(\pcIbq{\leq1}{s})\boxplus\eta\check{\calD}^{q,2}_s$\,.

The other two cases may be shown in the same way.
\end{proof}

We recall from \cite[p. 1034]{sphharm} the set of exceptional weights
\beq\pI:=\setb{n+N/2}{n\in\nzn}\cup\setb{1-n-N/2}{n\in\nzn}\qquad.\mylabel{pIdef}\eeq
(There this set is denoted by $J$ and here we only need it in the Hilbert space case $p=2$\,.)
Moreover, we define for $s>1-N/2$
\beq\bDqsom:=\dqsnom\cap\dhqom^\bot\qqtext{,}\bRqsom:=\ronqsnom\cap\dhqom^\bot\mylabel{bDbRdef}\eeq
and put as usual $\bDom{q}{}{0}:=\bDom{q}{0}{0}$ and $\bRom{q}{}{0}:=\bRom{q}{0}{0}$\,.
Now we are ready to establish our electro-magneto static solution theory and
formulate a first result for homogeneous, isotropic media:

\begin{lem}\mylabel{gewstatikdiv}
Let $1-N/2<s\notin\pI$\,. Then
$$\Abb{\DIV^{q+1}_{s-1}}{D(\DIV^{q+1}_{s-1})}{\bDqsom}{H}{\pdiv H}$$
is a continuous and surjective Fredholm operator on its domain of definition
\begin{align*}
D(\DIV^{q+1}_{s-1})&:=\Big(\big(\ronqpeom{s-1}\cap\dqpeom{s-1}\big)\boxplus\eta\calR^{q+1}(\pcJbqpe{0}{s-1})\boxplus\eta\check{\calR}^{q+1,1}_{s-1}\Big)\cap\ronqpelocnom\\
&\,\subset\,\ronqpenom{>-\Nh}\cap\dqpeom{>-\Nh}
\end{align*}
with kernel $\dhqpeom$\,.
\end{lem}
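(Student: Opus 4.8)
The plan is to establish the four assertions in turn --- that $\DIV^{q+1}_{s-1}$ maps boundedly into $\bDqsom$, that its kernel is $\dhqpeom$, that it is onto, and finally that it is Fredholm --- the last being essentially a corollary of the first three. \textbf{Boundedness and mapping property.} For every $H$ in the domain we have $\pdiv\pdiv H=0$, so $\pdiv H$ is divergence free, and on the Hilbert part $\ronqpeom{s-1}\cap\dqpeom{s-1}$ the membership $\pdiv H\in\Lzqom{s}=\Lzqsom$ is built into the norm of $\dqpeom{s-1}$. On the finitely many tower summands $\eta\calR^{q+1}(\pcJbqpe{0}{s-1})$ and $\eta\check{\calR}^{q+1,1}_{s-1}$ the underlying forms are divergence free by \eqref{Turmeins} and \eqref{Turmzwei}, so that only the compactly supported commutator terms $C_{\pdiv,\eta}$ survive, and these lie trivially in $\Lzqsom$. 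Hence $\pdiv H\in\dqsnom$, while $\pdiv H\in\dhqom^\bot$ is exactly Corollary \ref{dirichletsenkrecht}; boundedness relative to the graph norm of the domain is then immediate.

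\textbf{Kernel.} Let $H\in D(\DIV^{q+1}_{s-1})$ with $\pdiv H=0$. Intersecting the domain with $\ronqpelocnom$ forces $\rot H=0$ locally and carries the electric boundary condition, so $H$ is a rotation free, divergence free form satisfying the boundary condition, i.e.\ a harmonic Dirichlet form. Since the domain sits inside $\ronqpenom{>-\Nh}\cap\dqpeom{>-\Nh}$, Lemma \ref{intdirichletid}, which identifies $\dhqpeom$ with $\dH{q+1}{-\Nh}{}(\Omega)$, yields $H\in\dhqpeom$. Conversely every Dirichlet form lies in $\dH{q+1}{<\Nh-1}{}(\Omega)$ by the same lemma, and its ground floor far field is captured precisely by the summands $\eta\calR^{q+1}(\pcJbqpe{0}{s-1})\boxplus\eta\check{\calR}^{q+1,1}_{s-1}$, whence $\dhqpeom\subset D(\DIV^{q+1}_{s-1})$. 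Thus the kernel is exactly $\dhqpeom$.

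\textbf{Surjectivity --- the main obstacle.} Given $f\in\bDqsom$, I would build a preimage by a far field / near field decomposition in the spirit of \cite{complete} and \cite{linelaz}. Near infinity $f$ is a smooth, divergence free $q$-form of weight $s$; expanding it into spherical modes and inverting $\pdiv$ radially by the whole space tower operator, which raises the height through $\pdiv R^{q+1}_{{}_1I}=D^q_I$, produces an explicit $H_\infty$ with $\pdiv H_\infty=f$ there and with far field in the admitted tower spaces. After cutting $H_\infty$ off with $\eta$ away from the boundary, the residual $f-\pdiv(\eta H_\infty)$ is compactly supported, and I would solve the corresponding exterior problem with homogeneous boundary data by the classical $\lz$ static theory of \cite{potential,boundaryelectro}, using the {\sf MLCP} together with Riesz--Fredholm theory to absorb the compact lower order errors. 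The compatibility $f\in\dhqom^\bot$ removes the only possible cokernel obstruction, while $s\notin\pI$, see \eqref{pIdef}, guarantees that the weight avoids the resonances of the weighted spaces, so that the far field expansion of the assembled $H$, read off from Lemma \ref{deltaNullzerleg} in its case $\rot H=0$, $\pdiv H=f\in\Lzqsom$, indeed places $H$ in $D(\DIV^{q+1}_{s-1})$.

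\textbf{Fredholm property.} Surjectivity gives closed range and trivial cokernel, and the kernel $\dhqpeom$ is finite dimensional by the {\sf MLCP}; hence $\DIV^{q+1}_{s-1}$ is a Fredholm operator of index $\dim\dhqpeom$. I expect the genuinely hard step to be the surjectivity construction, where the whole space tower operator, the expansions of Theorem \ref{entwicklungssatzganzraum} and Lemma \ref{deltaNullzerleg}, the orthogonality to $\dhqom$, and the compactness furnished by the {\sf MLCP} all have to be brought together.
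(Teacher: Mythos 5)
Your first two steps and the closing Fredholm remark essentially reproduce the paper's argument: continuity holds because the finitely many tower summands are mapped to compactly supported forms, orthogonality to $\dhqom$ is Corollary \ref{dirichletsenkrecht}, and the inclusion $N(\DIV^{q+1}_{s-1})\subset\dhqpeom$ follows from Lemma \ref{intdirichletid}. Note, however, that the converse inclusion $\dhqpeom\subset D(\DIV^{q+1}_{s-1})$ is not just a matter of the "ground floor far field": for $s\geq N/2$ a Dirichlet form need not lie in $\Lzqpeom{s-1}$ at all, and one needs Lemma \ref{deltaNullzerleg} to produce exactly the decomposition into an $\Lzqpeom{s-1}$ part plus the admitted tower summands; this is how the paper argues. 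The genuine problems are in your surjectivity construction, which is the core of the proof.

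Your far-field step fails at the start: a form $F\in\bDqsom=\dqsnom\cap\dhqom^\bot$ is \emph{not} smooth near infinity, since $\pdiv$ is not elliptic and $\pdiv F=0$ gives no interior regularity (for instance $F=\pdiv H$ is divergence free for arbitrarily rough $H$). Consequently there is no "radial inversion of $\pdiv$ by the whole space tower operator" on such data: the relations $\pdiv R^{q+1}_{{}_1I}=D^q_I$ invert $\pdiv$ only on the finite dimensional spans of the homogeneous tower-forms, not on a general divergence free field, so your $H_\infty$ does not exist and the residual cannot be made compactly supported. The paper proceeds differently: it extends $F$ by zero to $\hat{F}$ on $\rN$, and here meets an obstacle your sketch ignores entirely — since $\dqsnom$ carries no boundary condition, $\hat{F}$ is in general \emph{not} divergence free in $\rN$ (this is precisely why Lemma \ref{gewstatikrot} is simpler: there the electric boundary condition keeps the zero extension in $\rqpesn$). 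Hence the whole-space Helmholtz-type decomposition \eqref{Fdach}, $\hat{F}=F_\textrm{D}+F_\textrm{R}+F_\calS$, from \cite[p. 1037, Theorem 4]{sphharm} is indispensable. Moreover, solving $\pdiv h=f$ in weighted spaces on $\rN$ via \cite[p. 1039, Theorem 7]{sphharm} is subject to range constraints even for non-exceptional $s\notin\pI$: one needs $f\in\dqsn\cap\big(\calP^{q,2}_{<s-\Nh}\big)^\bot$ (and orthogonality to $\Eins$ when $q=0$, $s>N/2$). These obstructions are removed by the correction terms $\skp{F_\textrm{D}}{D^q_{-I}}_{\Lzq{}}\cdot C_{\Delta,\eta}D^q_I$, $I\in\pcIbq{1}{s-2}$, in \eqref{kleinfdef}; your assertion that orthogonality to $\dhqom$ "removes the only possible cokernel obstruction" is therefore incorrect — that orthogonality is needed separately, namely so that the classical exterior $\lz$ theory of \cite{potential} applies to the near-field problem. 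Finally, even in the paper's construction the residual $F-\pdiv(\eta h)$ is not compactly supported: it contains $F_\textrm{R}$, and one must first prove $F_\textrm{R}\in\Lzqom{<\Nh-1}\subset\Lzqom{}$ via Theorem \ref{entwicklungssatzganzraum} before the classical theory can be invoked. The very last step you describe, placing the assembled solution in the weighted domain by Lemma \ref{deltaNullzerleg}, is correct and is indeed how the paper concludes.
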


\begin{proof}
Let us abbreviate $\DIV:=\DIV^{q+1}_{s-1}$\,.
$\pdiv$ and $\rot$ map tower-forms from
$$\eta\calR^{q+1}(\pcJbqpe{0}{s-1})\boxplus\eta\check{\calR}^{q+1,1}_{s-1}$$
to compactly supported forms. Thus with Corollary \ref{dirichletsenkrecht} and Remark \ref{intbarkeitTuerme}
$\DIV$ is well defined, linear and continuous because $\eta\calR^{q+1}(\pcJbqpe{0}{s-1})\boxplus\eta\check{\calR}^{q+1,1}_{s-1}$
is finite dimensional. Lemma \ref{intdirichletid} yields
$$N(\DIV)\subset\dH{q+1}{>-\Nh}{}(\Omega)=\dhqpeom\qquad.$$
On the other hand let $H\in\dhqpeom$\,. Then by Lemma \ref{deltaNullzerleg} we obtain
$$H\in\Lzqpeom{s-1}\boxplus\eta\calR^{q+1}(\pcJbqpe{0}{s-1})\boxplus\eta\check{\calR}^{q+1,1}_{s-1}$$
and therefore
$$H\in\big(\ronqpeom{s-1}\cap\dqpeom{s-1}\big)\boxplus\eta\calR^{q+1}(\pcJbqpe{0}{s-1})\boxplus\eta\check{\calR}^{q+1,1}_{s-1}\qquad,$$
which implies $H\in N(\DIV)$\,, i.e. $N(\DIV)=\dhqpeom$\,.

So only the surjectivity of $\DIV$ demands a proof. For this let $F\in\bDqsom$ and $\hat{F}$ be its
extension by zero in $\rN$\,. Applying \cite[p. 1037, Theorem 4]{sphharm} we decompose
\beq\hat{F}=:F_\textrm{D}+F_\textrm{R}+F_\calS\in\dqsn\dotplus\rqsn\dotplus\calS^q_s\mylabel{Fdach}\eeq
\big(Here the set $\calS^q_s=C_{\Delta,\eta}\Lin\set{Q^{q,4}_{\sigma,m}}{\sigma<s-N/2}$ is a finite dimensional
subspace of $\cqun\big(\rN\ohne\{0\}\big)$ and $C_{\Delta,\eta}$ denotes the commutator of the Laplacian $\Delta$
and the multiplication by $\eta$\,.\big) and set
\beq f:=F_\textrm{D}-\sum_{\substack{I\in\pcIbq{1}{s-2}}}\skp{F_\textrm{D}}{D^q_{-I}}_{\Lzq{}}\cdot C_{\Delta,\eta}D^q_I\qquad.\mylabel{kleinfdef}\eeq
The duality products are well defined because by Remark \ref{intbarkeitTuerme}
$$I\in\pcIbq{1}{s-2}\quad\equi\quad\ei(I)<s-1-N/2\quad\equi\quad D^q_{-I}=\turmd{q}{1}{\ei(I)}{\ci(I)}{+}\in\Lzq{-s}\quad.$$
Using Remark \ref{TurmBemFuenf} we get for $I\in\pcIbq{1}{}$
$$C_{\Delta,\eta}D^q_I=\pdiv\rot(\eta D^q_I)+\rot\ub{\pdiv(\eta D^q_I)}_{=0}\in\dqvoxn$$
and hence $f\in\dqsn$\,. Moreover, we compute for all $I,\tilde{I}\in\pcIbq{1}{}$ with Remark \ref{TurmBemZwei} and \cite[p. 1035, (73)]{sphharm}
\begin{align*}
\skpb{C_{\Delta,\eta}D^q_I}{D^q_{-\tilde{I}}}_{\Lzq{}}
&=\skpb{C_{\Delta,\eta}\turmd{q}{1}{\ei(I)}{\ci(I)}{-}}{\turmd{q}{1}{\ei(\tilde{I})}{\ci(\tilde{I})}{+}}_{\Lzq{}}\\
&=\frac{-1}{2\ei(\tilde{I})+N}\cdot\skpb{C_{\Delta,\eta}Q^{q,2}_{\ei(I)+1,\ci(I)}}{P^{q,2}_{\ei(\tilde{I})+1,\ci(\tilde{I})}}_{\Lzq{}}\\
&=\ub{\frac{-\alpha\big(N,\ei(I)+1\big)}{2\ei(\tilde{I})+N}}_{=1}\,\cdot\,\delta_{\ei(I),\ei(\tilde{I})}\cdot\delta_{\ci(I),\ci(\tilde{I})}\qquad.
\end{align*}
Thus for all $I\in\pcIbq{1}{s-2}$ we have
$$\skp{f}{D^q_{-I}}_{\Lzq{}}=0$$
and finally by Remark \ref{TurmBemZwei}
$$f\in\dqsn\cap\big(\calP^{q,2}_{<s-\Nh}\big)^\bot\qquad.$$
In particular $f$ belongs to the range of the operator $B$ from \cite[p. 1039, Theorem 7]{sphharm},
if $1\leq q\leq N-1$ or $s\leq N/2$\,. To get this likewise in the case $q=0$ and $s>N/2$\,,
additionally $f$ has to be perpendicular to $\Eins\cong\turmd{0}{0}{0}{1}{+}$\,. This can be achieved
replacing $f$ in \eqref{kleinfdef} by
$$\tilde{f}:=f-\skp{F_\textrm{D}}{\turmd{0}{0}{0}{1}{+}}_{\qLz{0}{}}\cdot C_{\Delta,\eta}\turmd{0}{2}{0}{1}{-}\qquad.$$
Utilizing \cite[p. 1039, Theorem 7]{sphharm} we then obtain some
$$h\in\dqpe{s-1}\cap\rqpen{s-1}\qqtext{solving}\pdiv h=f\qquad.$$
By regularity {\paulydissregtheoaussenzwei} we even have $h\in\qh{1}{q+1}{s-1}{}$\,.
Thus the ansatz
\beq H:=\eta\cdot h+\Phi\mylabel{Hansatz}\eeq
transforms the system under consideration
$$\pdiv H=F\qqtext{,}\rot H=0$$
with our assumptions and Corollary \ref{dirichletsenkrecht} into the system
\begin{align}\begin{split}
\rot\Phi&=-\rot(\eta h)=-C_{\rot,\eta}h\in\bRom{q+2}{\vox}{0}\qquad,\\
\pdiv\Phi&=F-\pdiv(\eta h)\in\bDqsom\subset\bDom{q}{>1-\Nh}{0}\qquad.
\end{split}\mylabel{transsys}\end{align}
So to solve this system using the classical solution theory developed in \cite{potential} we only have to show $F-\pdiv(\eta h)\in\Lzqom{}$\,.
For $1\leq q\leq N-1$ we have in $\Omega$
$$F-\pdiv(\eta h)=\pdiv\big((1-\eta)h\big)+F_\textrm{R}+F_\calS
+\sum_{\substack{I\in\pcIbq{1}{s-2}}}\skp{F_\textrm{D}}{D^q_{-I}}_{\Lzq{}}\cdot C_{\Delta,\eta}D^q_I$$
and therefore $F-\pdiv(\eta h)-F_\textrm{R}\in\Lzqvoxom$\,.
This remains true even in the case $q=0$\,, $s>N/2$\,.
Furthermore, \eqref{Fdach} and the vanishing divergence of $F$ yield
$$F_\textrm{R}\in\rqsn\qqtext{and}\pdiv F_\textrm{R}=0\qquad\text{in }\Omega\ohne\supp F_\calS\qquad.$$
By Theorem \ref{entwicklungssatzganzraum} and Remark \ref{intbarkeitTuerme} or as in the proof of
Lemma \ref{intdirichletid} we obtain
$$F_\textrm{R}\in\Lzqom{<\Nh-1}\subset\Lzqom{}\qquad.$$
Now we are able to apply \cite[Satz 2]{potential} or
{\paulydisstopiso} and get some
$$\Phi\in\ronqpeom{-1}\cap\dqpeom{-1}$$
solving the system \eqref{transsys}. Moreover, in $A_{r_2}$
$$\rot\Phi=0\qqtext{,}\rot\pdiv\Phi=\rot F_\textrm{R}$$
hold and thus
$$\restr{\Delta\Phi}{A_{r_2}}=0\qqtext{,}(\pdiv\Phi,\rot\Phi)\in\Lzqsom\times\qLzom{q+2}{\vox}\qquad.$$
Lemma \ref{deltaNullzerleg} yields $\Phi\in\Lzqpeom{s-1}\boxplus\eta\calR^{q+1}(\pcJbqpe{0}{s-1})\boxplus\eta\check{\calR}^{q+1,1}_{s-1}$ and hence
$$\Phi\in\big(\ronqpeom{s-1}\cap\dqpeom{s-1}\big)\boxplus\eta\calR^{q+1}(\pcJbqpe{0}{s-1})\boxplus\eta\check{\calR}^{q+1,1}_{s-1}\qquad.$$
This shows $H\in D(\DIV)$ and $\pdiv H=F$\,, which completes the proof.
\end{proof}

\begin{lem}\mylabel{gewstatikrot}
Let $1-N/2<s\notin\pI$\,. Then
$$\Abb{\ROT^q_{s-1}}{D(\ROT^q_{s-1})}{\bRqpesom}{E}{\rot E}$$
is a continuous and surjective Fredholm operator on its domain of definition
\begin{align*}
D(\ROT^q_{s-1})&:=\Big(\big(\ronqom{s-1}\cap\dqom{s-1}\big)\boxplus\eta\calD^q(\pcIbq{0}{s-1})\boxplus\eta\check{\calD}^{q,1}_{s-1}\Big)\cap\dqlocnom\\
&\,\subset\,\ronqom{>-\Nh}\cap\dqnom{>-\Nh}
\end{align*}
with kernel $\dhqom$\,.
\end{lem}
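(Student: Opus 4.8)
The plan is to mirror the structure of the proof of Lemma \ref{gewstatikdiv} exactly, exploiting the formal duality between the operators $\pdiv$ and $\rot$ under the Hodge star-operator, or equivalently by repeating the argument with the roles of $D$-towers and $R$-towers interchanged and all ranks shifted appropriately. First I would abbreviate $\ROT:=\ROT^q_{s-1}$ and verify that $\ROT$ is well defined, linear and continuous: since $\rot$ and $\pdiv$ map the finitely many tower-forms in $\eta\calD^q(\pcIbq{0}{s-1})\boxplus\eta\check{\calD}^{q,1}_{s-1}$ to compactly supported forms (by Remark \ref{TurmBemFuenf} and the relations $\rot D^q_I=R^{q+1}_{{}_{-1}I}$), Corollary \ref{dirichletsenkrecht} and Remark \ref{intbarkeitTuerme} guarantee that $\rot E$ lands in $\bRqpesom=\ronqpesnom\cap\dhqpeom^\bot$ for $E\in D(\ROT)$, with continuity following from finite-dimensionality of the tower-form part.

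Next I would identify the kernel. Since every $E\in D(\ROT)$ lies in $\ronqom{>-\Nh}$ by construction, the condition $\rot E=0$ forces $E\in\ronqnom{>-\Nh}\cap\dqom{>-\Nh}$; one inclusion follows from Lemma \ref{intdirichletid}, giving $N(\ROT)\subset\dH{q}{>-\Nh}{}(\Omega)=\dhqom$. For the reverse inclusion, take $E\in\dhqom$; then $\Delta E=0$ near infinity and $\rot E=0$, so Lemma \ref{deltaNullzerleg} (in the case $\rot E\in\Lzqpe{s+1}$, using the $\calD^q$-decomposition) places $E\in\Lzqsom\boxplus\eta\calD^q(\pcIbq{0}{s})\boxplus\eta\check{\calD}^{q,1}_s$, hence in $D(\ROT)$ with $\rot E=0$. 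This yields $N(\ROT)=\dhqom$.

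The bulk of the work, and the main obstacle, is surjectivity, which I expect to require the dual version of the whole-space argument from Lemma \ref{gewstatikdiv}. Given $G\in\bRqpesom$, I would extend it by zero to $\hat{G}$ on $\rN$, apply the Weck--Witsch decomposition \cite[p. 1037, Theorem 4]{sphharm} to write $\hat{G}=G_\mathrm{D}+G_\mathrm{R}+G_\calS$, and then correct $G_\mathrm{R}$ by subtracting suitable commutator terms $C_{\Delta,\eta}R^{q+1}_J$ (built from the tower-forms $R^{q+1}_J$ with $J\in\pcJbqpe{1}{s-2}$, the analogue of \eqref{kleinfdef}) so as to achieve orthogonality against the relevant exceptional potential forms. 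The delicate point is the orthogonality bookkeeping: one must check, exactly as in the computation with $\skpb{C_{\Delta,\eta}D^q_I}{D^q_{-\tilde I}}$, that the corrected field lands in the range of the whole-space operator from \cite[p. 1039, Theorem 7]{sphharm}, with the extra orthogonality condition against $*\Eins\cong\turmr{N}{0}{0}{1}{+}$ needed in the borderline rank $q+1=N$, $s>N/2$.

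Having solved the whole-space problem I would obtain $e\in\rqom{s-1}\cap\dqn{s-1}$, upgrade its regularity to $\qh{1}{q}{s-1}{}$ via \paulydissregtheoaussenzwei, and make the ansatz $E:=\eta\cdot e+\Psi$ with a correction $\Psi$ absorbing the commutator $C_{\rot,\eta}e$ and reducing the problem to a classical inhomogeneous static system solvable by \cite[Satz 2]{potential} resp. \paulydisstopiso. Finally, on $A_{r_2}$ the corrected field satisfies $\Delta\Psi=0$ with $(\rot\Psi,\pdiv\Psi)\in\Lzqpesom\times\qLzom{q-1}{\vox}$, so a last application of Lemma \ref{deltaNullzerleg} (now in the $\calD^q$-case) places $\Psi$, and hence $E$, in the asserted domain $D(\ROT)$ with $\rot E=G$, completing the proof.
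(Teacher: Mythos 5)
Your proposal is correct and follows the paper's strategy in outline: the paper proves this lemma precisely by dualizing the proof of Lemma \ref{gewstatikdiv}, with $R^{q+1}_J$\,, $J\in\pcJbqpe{1}{s-2}$\,, playing the role of $D^q_I$\,, and with the extra orthogonality to $*\Eins\cong\turmr{N}{0}{0}{1}{+}$ (enforced via $\turmr{N}{2}{0}{1}{-}$) in the case $q=N-1$\,, $s>N/2$ --- all of which you identified. The genuine difference is that you import the Helmholtz-type decomposition \eqref{Fdach} wholesale, whereas the paper's point is that exactly this step can be dropped: since $G\in\bRqpesom\subset\ronqpesnom$ carries the homogeneous boundary condition, its extension $\hat G$ by zero already lies in $\rqpesn$\,, so one applies the commutator corrections to $\hat G$ directly. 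Your version is not wrong --- by directness of the decomposition in \cite[p. 1037, Theorem 4]{sphharm}, applying it to $\hat G\in\rqpesn$ simply returns $G_{\mathrm D}=0$\,, $G_\calS=0$\,, $G_{\mathrm R}=\hat G$\,, so all the bookkeeping you set up for the leftover parts (the analogue of showing $F_{\mathrm R}\in\Lzqom{}$) collapses to triviality --- but you miss the observation that makes the rot case strictly simpler than the div case. Two small slips to correct: the whole-space solver needed here is \cite[p. 1037, Theorem 5]{sphharm} (the rotation analogue), not the operator $B$ of \cite[p. 1039, Theorem 7]{sphharm}, which solves the divergence equation; and in your kernel argument Lemma \ref{deltaNullzerleg} must be invoked in its third case (both $\rot E$ and $\pdiv E$ admissible near infinity, which a Dirichlet form satisfies), since that is the case yielding the ground-level decomposition $\eta\calD^q(\pcIbq{0}{s})\boxplus\eta\check{\calD}^{q,1}_s$ you quote.
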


\begin{proof}
The proof is analogous to the last one but more simple because the extension by zero into $\rN$ of $G\in\bRqpesom$
is still an element of $\rqpesn$\,, such that we do not need a Helmholtz decomposition like in \eqref{Fdach}.
The roles of $D^q_I$ are now played by $R^{q+1}_J$\,, $J\in\pcJbqpe{1}{s-2}$\,, and we use
\cite[p. 1037, Theorem 5]{sphharm} instead of \cite[p. 1039, Theorem 7]{sphharm}.
In the special case $q=N-1$ \big(formerly $q=0$\big), $s>N/2$ we have to guarantee the orthogonality
to $*\Eins\cong\turmr{N}{0}{0}{1}{+}$ with the help of $\turmr{N}{2}{0}{1}{-}$\,.
\end{proof}

We can generalize Lemma \ref{deltaNullzerleg} to

\begin{lem}\mylabel{rotdivintbarzerleg}
Let $\hat{r}\geq r_0$ and $E\in\Lzqom{-\Nh}$\,. If
$$\pdiv E\in\qLz{q-1}{s+1}(A_{\hat{r}})\qqtext{and}\rot E\in\Lzqpe{s+1}(A_{\hat{r}})$$
hold with some $-N/2<s\notin\pI$\,, then
$$E\in\Lzqsom\boxplus\eta\calD^q(\pcIbq{0}{s})\boxplus\eta\check{\calD}^{q,1}_s
=\Lzqsom\boxplus\eta\calR^q(\pcJbq{0}{s})\boxplus\eta\check{\calR}^{q,1}_s\qquad.$$
\end{lem}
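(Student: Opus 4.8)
The plan is to reduce Lemma \ref{rotdivintbarzerleg} to Lemma \ref{deltaNullzerleg} by first upgrading the hypothesis on $E$ to the harmonicity $\Delta E=0$ in an exterior region, and then invoking the appropriate (fourth) case of Lemma \ref{deltaNullzerleg}. The starting observation is that $E\in\Lzqom{-\Nh}$ with $\pdiv E\in\qLz{q-1}{s+1}(A_{\hat r})$ and $\rot E\in\Lzqpe{s+1}(A_{\hat r})$ gives $\Delta E=\rot\pdiv E+\pdiv\rot E$, and by increasing $\hat r$ if necessary one controls the integrability of $\Delta E$; but more directly, in the exterior region the data $\rot E$ and $\pdiv E$ live in the more strongly weighted spaces $\Lz{\cdot}{s+1}$, so one can apply the generalized spherical harmonics expansion of Theorem \ref{entwicklungssatzganzraum} componentwise. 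First I would restrict to $A_{\hat r}$ and expand $E$ as in \eqref{EentwicklungEins} using only the $Q^{q,k}_{\sigma,m}$ potential forms, since $E\in\Lzqom{-\Nh}$ already rules out the $P$-type potential forms (exactly as in the proof of Lemma \ref{deltaNullzerleg}).

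Next I would feed both the rotation condition $\rot E\in\Lzqpe{s+1}(A_{\hat r})$ and the divergence condition $\pdiv E\in\qLz{q-1}{s+1}(A_{\hat r})$ into the coefficient analysis. In the proof of Lemma \ref{deltaNullzerleg} each single condition was shown, via the explicit action of $\rot$ and $\pdiv$ on the $Q^{q,k}_{\sigma,m}$ (Remark \ref{TurmBemZwei}) together with the integrability characterization of Remark \ref{intbarkeitTuerme}, to force vanishing of all coefficients $\beta_{k,\sigma,m}$ with $\sigma\leq 2+s-N/2$ and the respective heights $k$, up to the four distinguished exceptional forms $Q^{0,4}_{0,1}$, $Q^{1,1}_{1,1}$, $Q^{N-1,2}_{1,1}$, $Q^{N,4}_{0,1}$. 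Imposing \emph{both} conditions simultaneously is precisely the third case of Lemma \ref{deltaNullzerleg}, which already yields the representation
$$
E-\sum_{\substack{k,\sigma,m\,,\\\sigma>2+s-N/2}}\beta_{k,\sigma,m}\cdot Q^{q,k}_{\sigma,m}
=\sum_{I\in\pcIbq{0}{s}}e^{q,1}_I\cdot D^q_I+\check{e}^{q,1}\cdot\check{D}^{q,1}_s
$$
in $A_{\hat r}$, and hence the membership $E\in\Lzqsom\boxplus\eta\calD^q(\pcIbq{0}{s})\boxplus\eta\check{\calD}^{q,1}_s$. The only genuine gap is that Lemma \ref{deltaNullzerleg} assumes $\Delta E=0$ in $A_{\hat r}$ and $s\geq-N/2$, whereas here $E$ is merely assumed to have the stated weighted rotation and divergence; so the essential point is to observe that the two weighted conditions imply $\Delta E\in\Lz{q}{s+2}(A_{\hat r})$, and that the harmonic-part argument still goes through because the expansion \eqref{EentwicklungEins} only requires $E$ itself to be a potential form modulo the lower-order forcing, which the exterior-domain regularity and Theorem \ref{entwicklungssatzganzraum} supply.

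The final identity $\eta\calD^q(\pcIbq{0}{s})\boxplus\eta\check{\calD}^{q,1}_s=\eta\calR^q(\pcJbq{0}{s})\boxplus\eta\check{\calR}^{q,1}_s$ I would quote directly from the concluding remark of Lemma \ref{deltaNullzerleg}, where it was already recorded; it reflects the ground-floor linear dependence $D^{q,0}_{\sigma,m}\cong R^{q,0}_{\sigma,m}$ from Remark \ref{TurmBemZwei} together with the coincidence $\check{\calD}^{q,1}_s=\check{\calR}^{q,1}_s$ noted in the $K=1$ formulas. The main obstacle I anticipate is not the algebra of coefficients—that is entirely inherited from Lemma \ref{deltaNullzerleg}—but rather the careful verification that the slightly relaxed hypotheses ($-N/2<s\notin\pI$, and $E$ given only through its image under $\rot$ and $\pdiv$ rather than as an a priori harmonic form) still place us in the exact situation covered by the third case of Lemma \ref{deltaNullzerleg}; once that reduction is justified, the statement follows immediately, and this is why the result is phrased as a generalization of that lemma.
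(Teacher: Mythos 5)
Your reduction fails at the decisive point: Lemma \ref{deltaNullzerleg} and the expansion \eqref{EentwicklungEins} behind it (Theorem \ref{entwicklungssatzganzraum}) apply only to forms which are harmonic in the exterior region, since the representation in the potential forms $Q^{q,k}_{\sigma,m}$ is an expansion of solutions of $\Delta E=0$. In the present lemma $E$ is precisely \emph{not} harmonic in $A_{\hat r}$: the hypotheses say only that $\rot E$ and $\pdiv E$ lie in $\Lzqpe{s+1}(A_{\hat r})$ resp.\ $\qLz{q-1}{s+1}(A_{\hat r})$, not that they vanish. Hence ``the third case of Lemma \ref{deltaNullzerleg}'' is not applicable, and your fallback argument is unjustified on both counts: from the two weighted conditions you cannot conclude $\Delta E=\rot\pdiv E+\pdiv\rot E\in\qLz{q}{s+2}(A_{\hat r})$ (this would require a further derivative of the data, which is not available, and differentiation does not improve decay anyway); and even if $\Delta E$ were known to decay, a form with $\Delta E\neq0$ admits no expansion purely in the $Q^{q,k}_{\sigma,m}$, so the coefficient analysis you want to inherit never gets started.

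The missing idea is to first construct a comparison form with the same rotation and divergence, so that the \emph{difference} becomes harmonic; this is what the paper does, and it is why the lemma is placed after Lemmas \ref{gewstatikdiv} and \ref{gewstatikrot}. Concretely, with $\varphi:=\mbox{\boldmath$\eta$}\big(r/(2\hat r)\big)$ the form $\varphi E$ satisfies the homogeneous boundary condition on $A_{r_0}$ and
\begin{align*}
\pdiv(\varphi E)&\in\bD{q-1}{s+1}{0}(A_{r_0})\qquad,\\
\rot(\varphi E)&\in\bR{q+1}{s+1}{0}(A_{r_0})\qquad,
\end{align*}
so the weighted static solution theory for $\eps=\id$ in the exterior domain $A_{r_0}$ (Lemmas \ref{gewstatikdiv} and \ref{gewstatikrot}) yields some
$$e\in\big(\ronqs(A_{r_0})\cap\dqs(A_{r_0})\big)\boxplus\eta\calD^q(\pcIbq{0}{s})\boxplus\eta\check{\calD}^{q,1}_s$$
with $\rot e=\rot(\varphi E)$ and $\pdiv e=\pdiv(\varphi E)$\,. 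Only now is $e-\varphi E$ a Dirichlet form in $\dH{q}{-\Nh}{}(A_{r_0})$\,, to which Lemma \ref{intdirichletid} and Lemma \ref{deltaNullzerleg} do apply; this places $e-\varphi E$\,, hence $\varphi E$\,, and finally $E=(1-\varphi)E+\varphi E$ (the first summand being compactly supported) in $\Lzqsom\boxplus\eta\calD^q(\pcIbq{0}{s})\boxplus\eta\check{\calD}^{q,1}_s$\,. Your proposal contains no substitute for this existence step; without it the lemma cannot be reduced to the harmonic case, and supplying that step is the actual content of the paper's proof.
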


\begin{proof}
Let $\varphi:=\mbox{\boldmath$\eta$}\big(r/(2\hat{r})\big)$\,. Then
$\varphi E\in\ronq{-\Nh}(A_{r_0})\cap\pdq{-\Nh}(A_{r_0})$ and
\begin{align*}
\pdiv(\varphi E)&\in\bD{q-1}{s+1}{0}(A_{r_0})\qquad,\\
\rot(\varphi E)&\in\bR{q+1}{s+1}{0}(A_{r_0})\qquad.
\end{align*}
By Lemma \ref{gewstatikdiv} and Lemma \ref{gewstatikrot} there exists some
$$e\in\big(\ronqs(A_{r_0})\cap\dqs(A_{r_0})\big)\boxplus\eta\calD^q(\pcIbq{0}{s})\boxplus\eta\check{\calD}^{q,1}_s\qquad,$$
such that $\rot e=\rot(\varphi E)$ and $\pdiv e=\pdiv(\varphi E)$\,.
Thus $e-\varphi E\in\dH{q}{-\Nh}{}(A_{r_0})$ is a Dirichlet form and therefore $e-\varphi E\in\dH{q}{}{}(A_{r_0})$
by Lemma \ref{intdirichletid}. Extending $e$ by zero into $\Omega$ we get with Lemma \ref{deltaNullzerleg}
$e,e-\varphi E\in\Lzqsom\boxplus\eta\calD^q(\pcIbq{0}{s})\boxplus\eta\check{\calD}^{q,1}_s$\,. Thus $\varphi E$
and $E=(1-\varphi)E+\varphi E$ are elements of $\Lzqsom\boxplus\eta\calD^q(\pcIbq{0}{s})\boxplus\eta\check{\calD}^{q,1}_s$\,.
\end{proof}

Now we consider inhomogeneous, anisotropic media. First we want to generalize Lemma \ref{intdirichletid}:

\begin{lem}\mylabel{intdirichleteps}
Let $\tau>0$\,. Then
$$\dhqepsom{-\Nh}=\dhqepsom{}=\dhqepsom{<\Nh-1}\qquad.$$
For $q\notin\{1,N-1\}$ even $\dhqepsom{}=\dhqepsom{<\Nh}$ holds.
\end{lem}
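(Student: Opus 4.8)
The inclusions from right to left are immediate. For weights $t_1\leq t_2$ the weighted graph norm defining $\ronqnom{t_2}$ resp. $\dqnom{t_2}$ dominates the one defining $\ronqnom{t_1}$ resp. $\dqnom{t_1}$ (since $\rho^{t_2}\geq\rho^{t_1}$), so these closures are nested and $\dhqepsom{t_2}\subset\dhqepsom{t_1}$. Reading $\dhqepsom{<\Nh-1}:=\bigcap_{s<\Nh-1}\dhqepsom{s}$ and using $-\Nh<0<\Nh-1$ (recall $N\geq3$) this gives $\dhqepsom{<\Nh-1}\subset\dhqepsom{}\subset\dhqepsom{-\Nh}$, and likewise for the $<\Nh$ variant. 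Hence everything reduces to the single nontrivial inclusion $\dhqepsom{-\Nh}\subset\dhqepsom{<\Nh-1}$ (resp. $\subset\dhqepsom{<\Nh}$ if $q\notin\{1,N-1\}$), which I would obtain by a finite bootstrap in the weight, driven by the decay order $\tau>0$ of $\epsd$.

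So fix $E\in\dhqepsom{-\Nh}$, i.e. $E\in\Lzqom{-\Nh}$ with $\rot E=0$ and $\pdiv\eps E=0$, satisfying the homogeneous electric boundary condition. Since $\eps=\id+\epsd$ and $\rN\ohne\Omega\subset U_{r_0}$, in the exterior $A_{\hat{r}}$ (for $\hat{r}\geq r_0$) we have the pointwise identity $\pdiv E=-\pdiv(\epsd E)$. The plan is to feed this into Lemma \ref{rotdivintbarzerleg}: if I can show $\pdiv E\in\qLz{q-1}{s+1}(A_{\hat{r}})$ and (trivially) $\rot E=0\in\Lzqpe{s+1}(A_{\hat{r}})$ for some $-\Nh<s\notin\pI$ with $s<\Nh-1$, then that lemma yields $E\in\Lzqsom\boxplus\eta\calD^q(\pcIbq{0}{s})\boxplus\eta\check{\calD}^{q,1}_s$. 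By Remark \ref{intbarkeitTuerme} the index set $\pcIbq{0}{s}=\setb{I\in\pcIbq{0}{}}{\ei(I)\leq s-\Nh}$ is empty whenever $s<\Nh$, and $\check{\calD}^{q,1}_s=\{0\}$ whenever $s<\Nh-1$ (its defining form is nonzero only for $q\in\{1,N-1\}$ and $s\geq\Nh-1$). Thus the tower contributions disappear and $E\in\Lzqsom$, improving the integrability weight from $-\Nh$ to $s$; since the defining equations and the boundary condition are weight independent, this membership is in fact in $\dhqepsom{s}$ (the closure membership being transferred by a routine cut-off approximation once the better integrability is available).

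The weight gain per step is exactly $\tau$. Starting from $E\in\Lzq{t}(A_{\hat{r}})$, the regularity results \paulydissregtheoaussen, \paulydissregtheoaussenzwei show that $E$ is smooth in the exterior (note $\Delta E=\rot\pdiv E=-\rot\pdiv(\epsd E)$ has fast-decaying right hand side) and provide the weighted first-derivative estimate $\p E\in\Lzq{t+1}(A_{\hat{r}})$. Combining this with the admissibility bounds $\epsd=\calO(r^{-\tau})$ and $\p\epsd=\calO(r^{-1-\tau})$ and expanding $\pdiv(\epsd E)$ into a zeroth-order term carrying $\p\epsd$ and a term carrying $\epsd\,\p E$, both summands lie in $\qLz{q-1}{t+1+\tau}(A_{\hat{r}})$; hence $\pdiv E=-\pdiv(\epsd E)\in\qLz{q-1}{t+1+\tau}(A_{\hat{r}})$. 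Lemma \ref{rotdivintbarzerleg} then applies with $s=t+\tau$ and upgrades $E$ to $\Lzq{t+\tau}$. Setting $t_0:=-\Nh$ and $t_{j+1}:=t_j+\tau$, after finitely many iterations the weight surpasses any bound strictly below $\Nh-1$; because the gap $[-\Nh,\Nh-1)$ has finite length $N-1$ and each step advances by the fixed amount $\tau>0$, only finitely many steps are needed, and since $\pI$ is discrete I can perturb the chosen weights slightly so that each intermediate $s$ stays out of $\pI$ while still covering the interval. This gives $E\in\Lzqsom$ for every $s<\Nh-1$, i.e. $E\in\dhqepsom{<\Nh-1}$. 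For $q\notin\{1,N-1\}$ the form $\check{\calD}^{q,1}_s$ vanishes for all $s$, so the same iteration runs up to $s<\Nh$ and yields the sharper $\dhqepsom{<\Nh}$.

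The main obstacle is precisely the weight-gain step, i.e. converting the decay of $\epsd$ into the exact $L^2$-weight improvement for $\pdiv E$. This hinges on the weighted interior elliptic estimate $\p E\in\Lzq{t+1}(A_{\hat{r}})$ for the almost-harmonic form $E$ together with careful bookkeeping of the commutator in $\pdiv(\epsd\,\cdot)$; once these are in place the iteration is purely formal. It is essential that it terminates after finitely many steps, and this is exactly why the mere hypothesis $\tau>0$ (rather than some larger lower bound on the decay) suffices here, in contrast to the higher-weight solution theory where $\tau$ must grow with $s$.
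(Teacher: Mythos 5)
Your proposal is correct and follows essentially the same route as the paper: exterior regularity giving the weighted first-derivative estimate, the identity $\pdiv E=-\pdiv\epsd E$ to gain a weight $\tau$, Lemma \ref{rotdivintbarzerleg} to upgrade the integrability, and a finite iteration in $\tau$-steps avoiding the exceptional weights $\pI$. The only cosmetic difference is that you cap the intermediate weights below $N/2-1$ (resp.\ $N/2$) so that the tower terms never appear, whereas the paper lets them appear and absorbs them via the bound $\eta\calD^q(\pcIbq{0}{s})\boxplus\eta\check{\calD}^{q,1}_{s}\subset\Lzqom{<s_q}$ with $s_q=N/2-\delta_{q,1}-\delta_{q,N-1}$; both are equally valid.
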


\begin{rem}\mylabel{intdirichletepsbem}
In particular $\dhqepsom{}\subset\Lzqom{-s}$\,, if $s>1-N/2$\,. Moreover, in the case $q\notin\{1,N-1\}$
this inclusion remains valid for $s>-N/2$\,.
\end{rem}

\begin{proof}
Let $E\in\dhqepsom{-\Nh}$\,. By regularity, e.g. {\paulydissregcoraussen}, $E$ belongs to
$\qh{1}{q}{-\Nh}{}(A_{r_0})$ and thus in $A_{r_0}$
$$\rot E=0\qqtext{,}\pdiv E=-\pdiv\epsd E\in\qLz{q-1}{-\Nh+1+\tau}(A_{r_0})$$
hold true. We assume w. l. o. g. $\tau-N/2\notin\pI$ and obtain by Lemma \ref{rotdivintbarzerleg}
$$E\in\Lzqom{\tau-\Nh}\boxplus\eta\calD^q(\pcIbq{0}{\tau-\Nh})\boxplus\eta\check{\calD}^{q,1}_{\tau-\Nh}\qquad.$$
By Remark \ref{intbarkeitTuerme} we get
$$\eta\calD^q(\pcIbq{0}{\tau-\Nh})\boxplus\eta\check{\calD}^{q,1}_{\tau-\Nh}\subset\Lzqom{<s_q}$$
with $s_q:=N/2-\delta_{q,1}-\delta_{q,N-1}$\,.
If $\tau-N/2\geq s_q$\,, we get $E\in\Lzqom{<s_q}$\,, i.e. $E\in\dhqepsom{<s_q}$\,.
If $\tau-N/2<s_q$\,, we only have $E\in\Lzqom{\tau-\Nh}$\,, i.e. $E\in\dhqepsom{\tau-\Nh}$\,.
Repeating this argument leads us after finitely many $\tau$-steps to $E\in\dhqepsom{<s_q}$\,.
\end{proof}

Using Helmholtz decompositions, e.g. \paulytimeharmlzzerl, it is easy to show that the dimension $d^q$ of
the Dirichlet forms $\dhqepsom{}$ does not depend on the transformation $\eps$\,, i.e.
$$d^q=\dim\dhqom\qquad.$$
Furthermore, $d^q$ is finite since $\om$ has the {\sf MLCP}.
Moreover, we obtain

\begin{cor}\mylabel{dimdirichleteps}
Let $\tau>0$ and $-N/2\leq t<N/2-1$\,. Then
$$\dim\dhqepsom{t}=d^q<\infty\qquad.$$
If $q\notin\{1,N-1\}$ the first equation holds even for $-N/2\leq t<N/2$\,.
\end{cor}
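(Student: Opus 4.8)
The plan is to obtain the corollary as an essentially immediate consequence of Lemma \ref{intdirichleteps}, combined with the monotonicity of the weighted Dirichlet spaces in their weight parameter. The real analytic work sits in Lemma \ref{intdirichleteps}; here only a squeeze (sandwich) across the admissible range of weights, plus the bookkeeping already recalled before the statement, is required.

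First I would record the nesting of the spaces. Since $\rho\geq1$, for $t_1\leq t_2$ the pointwise bound $|\rho^{t_1}E|\leq|\rho^{t_2}E|$ shows $\Lzqom{t_2}\subseteq\Lzqom{t_1}$, and the same reasoning at the level of the $\rot$- and $\pdiv$-closures defining $\ronqnom{t}$ and $\eps^\me\dqnom{t}$ gives the decreasing inclusion $\dhqepsom{t_2}\subseteq\dhqepsom{t_1}$. In particular, for any weight $t$ with $-\Nh\leq t<\Nh-1$ I may choose an intermediate weight $s$ with $t<s<\Nh-1$ and sandwich
$$\dhqepsom{<\Nh-1}\subseteq\dhqepsom{s}\subseteq\dhqepsom{t}\subseteq\dhqepsom{-\Nh}\qquad.$$

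Now Lemma \ref{intdirichleteps} supplies the identifications $\dhqepsom{-\Nh}=\dhqepsom{}=\dhqepsom{<\Nh-1}$, so the two outer members of the chain coincide and the whole chain collapses: $\dhqepsom{t}=\dhqepsom{}$ as sets for every $-\Nh\leq t<\Nh-1$ (the case $t=-\Nh$ being already covered directly by the lemma). Consequently $\dim\dhqepsom{t}=\dim\dhqepsom{}=d^q$, where the last equality and the finiteness $d^q<\infty$ are precisely the facts recalled just before the statement, namely the $\eps$-independence of $d^q$ obtained from the Helmholtz decompositions and the finiteness coming from the {\sf MLCP}. For $q\notin\{1,N-1\}$ the very same argument applies verbatim, now invoking the stronger conclusion $\dhqepsom{}=\dhqepsom{<\Nh}$ of Lemma \ref{intdirichleteps}; this allows the intermediate weight $s$, and hence $t$, to run up to $\Nh$, extending the range to $-\Nh\leq t<\Nh$.

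I do not expect a genuine obstacle here: once Lemma \ref{intdirichleteps} is in hand the sets literally coincide throughout the admissible range, so the equality of dimensions is automatic. The only point deserving care is the reading of $\dhqepsom{<\Nh-1}$ as the intersection $\bigcap_{s<\Nh-1}\dhqepsom{s}$, since it is exactly this reading that legitimizes choosing an intermediate weight $s$ strictly between $t$ and $\Nh-1$ (respectively $\Nh$) and thereby makes the sandwich valid.
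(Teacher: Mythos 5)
Your proposal is correct and is essentially the paper's own (implicit) argument: the corollary is stated there without a written proof precisely because, as you observe, it follows immediately from Lemma \ref{intdirichleteps} combined with the monotonicity of the weighted Dirichlet spaces in the weight parameter and the finiteness and $\eps$-independence of $d^q$ recalled just before the statement. Your sandwich between the weights $-N/2$ and $N/2-1$ (resp. $N/2$ for $q\notin\{1,N-1\}$), together with the reading of the subscript $<$ as an intersection over weights, is exactly the intended reasoning.
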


Lemma \ref{intdirichleteps} yields a generalization of Corollary \ref{dirichletsenkrecht}:

\begin{cor}\mylabel{dirichletsenkrechteps}
Let $\tau>0$ and $s>1-N/2$\,. Then with closures in $\Lzqsom$
\begin{align*}
\ol{\rot\pr{q-1}{s-1}{}{\circ}(\Omega)}\cup\ol{\rot\pR{q-1}{s}{}{\circ}(\Omega)}&\subset\dhqepsom{}^{\bot_\eps}\qquad,\\
\ol{\pdiv\pdi{q+1}{s-1}{}{}(\Omega)}\cup\ol{\pdiv\pDi{q+1}{s}{}{}(\Omega)}&\subset\dhqepsom{}^\bot
\end{align*}
hold. Here we denote by $\bot_\eps$ the orthogonality with respect to the $\skp{\eps\,\cdot\,}{\,\cdot\,}_{\Lzqom{}}$-duality.
\end{cor}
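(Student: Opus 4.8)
The plan is to mirror the proof of Corollary \ref{dirichletsenkrecht}, the only additional ingredient being the symmetry of $\eps$, which lets me pass between the $\eps$-twisted pairing $\skp{\eps\,\cdot\,}{\,\cdot\,}_{\Lzqom{}}$ and the untwisted $\Lzqom{}$-pairing. I would first record that by Remark \ref{intdirichletepsbem} every $\eps$-Dirichlet form $H\in\dhqepsom{}$ lies in $\Lzqom{-s}$, the dual of $\Lzqsom$ with respect to the unweighted $\Lzqom{}$-pairing. Since $\phi\in\pr{q-1}{s-1}{}{\circ}(\Omega)$ or $\phi\in\pR{q-1}{s}{}{\circ}(\Omega)$ gives $\rot\phi\in\Lzqsom$ and $\psi\in\pdi{q+1}{s-1}{}{}(\Omega)$ or $\psi\in\pDi{q+1}{s}{}{}(\Omega)$ gives $\pdiv\psi\in\Lzqsom$, and $\eps$ is bounded, all pairings appearing below are finite and continuous on $\Lzqsom$.

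For the rotation part I fix $H\in\dhqepsom{}$, so that $H\in\ronqnom{}$, i.e. $\rot H=0$ with $\iota^*H=0$, and $\eps H\in\dqnom{}$, i.e. $\pdiv(\eps H)=0$. Using the symmetry of $\eps$ and the partial integration formula \paulytimeharmpartint I compute
$$\skp{\eps\rot\phi}{H}_{\Lzqom{}}=\skp{\rot\phi}{\eps H}_{\Lzqom{}}=\skp{\phi}{\pdiv(\eps H)}_{\qLzom{q-1}{}}=0\qquad,$$
where the boundary contribution in \paulytimeharmpartint drops out because $\phi$ carries the homogeneous boundary condition signalled by $\circ$ (so $\iota^*\phi=0$) and the remaining volume term vanishes since $\pdiv(\eps H)=0$. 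Hence $\rot\phi\in\dhqepsom{}^{\bot_\eps}$.

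For the divergence part I again fix $H\in\dhqepsom{}$ and a test form $\psi$ as above. Partial integration \paulytimeharmpartint now gives
$$\skp{\pdiv\psi}{H}_{\Lzqom{}}=\skp{\psi}{\rot H}_{\Lzqpeom{}}=0\qquad,$$
where this time the boundary term disappears because $H\in\ronqnom{}$ satisfies $\iota^*H=0$, and the volume term because $\rot H=0$. Thus $\pdiv\psi\in\dhqepsom{}^{\bot}$. Finally, both complements $\dhqepsom{}^{\bot_\eps}$ and $\dhqepsom{}^{\bot}$ are closed in $\Lzqsom$, being intersections of kernels of the continuous functionals $A\mapsto\skp{\eps A}{H}_{\Lzqom{}}$ resp. $A\mapsto\skp{A}{H}_{\Lzqom{}}$ with $H$ ranging over the finite dimensional space $\dhqepsom{}\subset\Lzqom{-s}$ (Corollary \ref{dimdirichleteps}); therefore the stated inclusions pass to the $\Lzqsom$-closures.

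The step requiring genuine care is the applicability of \paulytimeharmpartint and the vanishing of its boundary terms: I must check that the weighted integrability of $\eps H$ and of $H$ guaranteed by Lemma \ref{intdirichleteps} is compatible with the weights $s-1$ and $s$ of the test spaces, so that the boundary form is actually defined, and that in each of the two parts it is precisely the homogeneous boundary condition of the appropriate factor — of $\phi$ in the rotation part, of $H$ in the divergence part — which annihilates the boundary integral. Once this is in place, the computations above are immediate, exactly as in the isotropic situation of Corollary \ref{dirichletsenkrecht}.
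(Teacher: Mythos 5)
Your proposal is correct and follows essentially the same route as the paper: the paper derives Corollary \ref{dirichletsenkrechteps} from Lemma \ref{intdirichleteps} (via Remark \ref{intdirichletepsbem}, which makes the $\Lzqsom$-$\Lzqom{-s}$ pairings with the $\eps$-Dirichlet forms well defined) together with the partial integration formula \paulytimeharmpartint, exactly as in Corollary \ref{dirichletsenkrecht}. Your explicit use of the symmetry of $\eps$, the vanishing of $\pdiv(\eps H)$ resp. $\rot H$, the homogeneous boundary conditions, and the continuity/density argument for passing to closures is precisely the intended (only sketched) argument.
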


\begin{lem}\mylabel{gewstatikdivmu}
Let $1-N/2<s\notin\pI$ and $\tau>\max\{0,s-N/2\}$\,, $\tau\geq-s$\,. Then
$$\Abb{\sideset{_\mu}{^{q+1}_{s-1}}\DIV}{D(\sideset{_\mu}{^{q+1}_{s-1}}\DIV)}{\bDqsom}{H}{\pdiv H}$$
is a continuous and surjective Fredholm operator on its domain of definition
\begin{align*}
&\qquad D(\sideset{_\mu}{^{q+1}_{s-1}}\DIV)\\
&:=\Big(\big(\mu^\me\ronqpeom{s-1}\cap\dqpeom{s-1}\big)\boxplus\eta\calR^{q+1}(\pcJbqpe{0}{s-1})\boxplus\eta\check{\calR}^{q+1,1}_{s-1}\Big)\cap\mu^\me\ronqpelocnom\\
&\,\subset\,\mu^\me\ronqpenom{>-\Nh}\cap\dqpeom{>-\Nh}
\end{align*}
with kernel $\mu^\me\dH{q+1}{}{\mu^\me}(\Omega)$\,.
\end{lem}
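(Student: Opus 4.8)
The plan is to follow the proof of Lemma~\ref{gewstatikdiv} and to treat $\mu=\id+\mud$ as a perturbation of the isotropic operator $\DIV^{q+1}_{s-1}$ whose size is governed by the decay order $\tau$. Well-definedness, linearity and continuity are checked exactly as there: on the finite-dimensional part $\eta\calR^{q+1}(\pcJbqpe{0}{s-1})\boxplus\eta\check{\calR}^{q+1,1}_{s-1}$ the codifferential produces compactly supported forms, while on $\mu^\me\ronqpeom{s-1}\cap\dqpeom{s-1}$ the map $H\mapsto\pdiv H$ is bounded into $\Lzqsom$ by the graph norm. Since the domain still imposes $H\in\dqpeom{s-1}$, the image is divergence free and, by Corollary~\ref{dirichletsenkrecht}, orthogonal to $\dhqom$, hence contained in $\bDqsom$. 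For the kernel, let $\pdiv H=0$ with $H$ in the domain; then $E:=\mu H$ satisfies $\rot E=0$ with the boundary condition and $\pdiv(\mu^\me E)=\pdiv H=0$, so $E\in\dH{q+1}{>-\Nh}{\mu^\me}(\Omega)$ and $H\in\mu^\me\dH{q+1}{>-\Nh}{\mu^\me}(\Omega)$. As $\mu^\me=\id-\mu^\me\mud$ is again $\tau$-$\pc{1}$-admissible, Lemma~\ref{intdirichleteps} (applied to $\mu^\me$ and rank $q+1$) collapses the weight, yielding the kernel $\mu^\me\dH{q+1}{}{\mu^\me}(\Omega)$, which by Corollary~\ref{dimdirichleteps} is finite dimensional of dimension $d^{q+1}$.

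Surjectivity is the core. Given $F\in\bDqsom$, I would substitute $\hat H:=\mu H$ and use $\mu^\me=\id-\mu^\me\mud$ to rewrite $\pdiv H=F$, $\rot\mu H=0$ as $\rot\hat H=0$ together with $\pdiv\hat H=F+\pdiv(\mu^\me\mud\,\hat H)$. Writing $\loes_0$ for a right inverse of the isotropic operator $\DIV^{q+1}_{s-1}$ of Lemma~\ref{gewstatikdiv}, chosen orthogonal to its kernel $\dhqpeom$, this leads to the equation $(\id-T)\tilde F=F$ in $\bDqsom$, where $T\tilde F:=\pdiv(\mu^\me\mud\,\loes_0\tilde F)$.

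The operator $T$ maps $\bDqsom$ into itself, its values being divergence free and, by Corollary~\ref{dirichletsenkrecht}, orthogonal to $\dhqom$. Here the decay is decisive: $\loes_0\tilde F$ has weight $s-1$ and, by regularity \paulydissregtheoaussenzwei, lies in a weighted $\pc{1}$ class, while $\mud=\calO(r^{-\tau})$ and its first derivatives $=\calO(r^{-1-\tau})$ raise the weight of $\mu^\me\mud\,\loes_0\tilde F$ by $\tau$; since in this calculus $\pdiv$ itself gains one weight, $T\tilde F\in\Lzqom{s+\tau}$ with a strict gain of decay. The thresholds $\tau>\max\{0,s-\Nh\}$ and $\tau\geq-s$ are exactly what keep all intermediate weights inside the admissible range $(1-\Nh,\,\cdot\,)\setminus\pI$ and enforce this gain. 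Together with the {\sf MLCP} and a weighted Rellich selection argument, the gain of decay renders $T$ compact. The hard part will be this weighted bookkeeping: one must verify, with the sharp $\tau$-thresholds, that $T$ not merely returns to $\bDqsom$ but does so with a genuine decay gain so that compactness, hence the Fredholm alternative, applies.

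Then $\id-T$ is Fredholm of index zero, and it suffices to prove injectivity. If $(\id-T)\tilde F=0$, then with $\hat H:=\loes_0\tilde F$ one gets $\pdiv\hat H=\pdiv(\mu^\me\mud\,\hat H)$, i.e.\ $\pdiv(\mu^\me\hat H)=0$ and $\rot\hat H=0$, so $\hat H\in\dH{q+1}{}{\mu^\me}(\Omega)$ is a $\mu^\me$-Dirichlet form which, by the choice of $\loes_0$, is orthogonal to the $\id$-Dirichlet forms $\dhqpeom$. The Helmholtz-decomposition argument showing $\dim\dhqpeom=\dim\dH{q+1}{}{\mu^\me}(\Omega)=d^{q+1}$ also gives the non-degeneracy of the natural pairing between these two spaces, whence $\hat H=0$ and $\tilde F=\pdiv\hat H=0$. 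Thus $\id-T$ is bijective, $\tilde F=(\id-T)^{-1}F$ exists, and $H:=\mu^\me\loes_0\tilde F$ solves $\pdiv H=F$, $\rot\mu H=0$. Finally, regularity (\paulydissregcoraussen) together with Lemma~\ref{rotdivintbarzerleg} upgrades the integrability of $H$ so that $\mu H\in\ronqpeom{s-1}$ modulo the finite tower part, placing $H$ in $D(\sideset{_\mu}{^{q+1}_{s-1}}\DIV)$ and establishing surjectivity; being surjective with finite-dimensional kernel, the operator is Fredholm. Besides the weighted bookkeeping above, I expect the second genuine obstacle to be precisely this non-degeneracy of the Dirichlet-form pairing, which is what forces $\tilde F=0$ and thereby the invertibility of $\id-T$.
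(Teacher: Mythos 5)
Your reduction of surjectivity to $(\id-T)\tilde F=F$ with $T\tilde F:=\pdiv(\mu^\me\mud\,\loes_0\tilde F)$ breaks down at two points, and they are fatal rather than technical. First, $T$ is not well defined on $\bDqsom$: a $\tau$-$\pc{1}$-admissible $\mu$ has its perturbation $\mud$ of class $\pc{1}$ and decaying only \emph{outside a compact set}; near the Lipschitz boundary $\mud$ is merely bounded measurable, and there $\loes_0\tilde F$ lies only in the Maxwell space $\ronqpeom{s-1}\cap\dqpeom{s-1}$ (there is no $\qh{1}{q+1}{s-1}{}$-regularity up to a Lipschitz boundary, so your appeal to {\paulydissregtheoaussenzwei} — an away-from-the-boundary result — does not help). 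Multiplication by an $L^\infty$ transformation does not preserve the property of having square-integrable divergence, so $\pdiv(\mu^\me\mud\,\loes_0\tilde F)$ need not lie in $\Lzqsom$, indeed need not be a function at all. Second, even if $\mud$ were smooth everywhere, $T$ would not be compact: it is the composition $\pdiv\circ(\text{multiplication by }\mu^\me\mud)\circ\loes_0$ of operators of order $+1$, $0$, $-1$, hence an operator of order zero with no smoothing. The decay $r^{-\tau}$ controls the tail, and {\sf MLCP} handles the term in which the derivative falls on $\mud$, but a bounded sequence $\tilde F_n$ with ever faster oscillations is mapped to a sequence with the same oscillations — the derivative gained by $\loes_0$ is exactly consumed by $\pdiv$ — so $T\tilde F_n$ has no subsequence converging in $\Lzqloc(\omq)$. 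Since $\mud$ is moreover not assumed small, neither the Fredholm alternative nor a Neumann series is available for $\id-T$, and the surjectivity argument collapses. (The non-degeneracy of the pairing between $\dH{q+1}{}{\mu^\me}(\Omega)$ and $\dhqpeom$, which you single out as the other main obstacle, is by comparison a minor, repairable point.)

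This failure is exactly why the paper does not treat the anisotropy perturbatively. Its proof first peels off the weighted data with the \emph{isotropic whole-space} theory (the ansatz $H=\eta\cdot h+\Phi$ from the proof of Lemma \ref{gewstatikdiv}), uses $\tau\geq-s$ only to ensure that the residual system for $\Phi$, namely $\rot\mu\Phi=-\rot(\mu\eta h)$ and $\pdiv\Phi=F-\pdiv(\eta h)$, has data in the \emph{unweighted} spaces $\Lzqom{}\times\qLzom{q+2}{}$, and then solves this residual problem by the classical anisotropic static theory {\paulydisstopiso} (going back to \cite{potential}), which handles arbitrary $L^\infty$ positive definite $\mu$ near the boundary by Hilbert-space methods. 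The decay of $\mud$ enters only afterwards, in the finite bootstrap based on Lemma \ref{rotdivintbarzerleg} that raises the weight of $\Phi$ from $-1$ to $s-1$ in steps of size $\tau$. Any repaired version of your scheme would still have to invoke such a non-perturbative theory for the boundary region, at which point one is essentially reproducing the paper's proof.
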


\begin{proof}
We set $\DIV:=\sideset{_\mu}{^{q+1}_{s-1}}\DIV$ and follow the proof of Lemma \ref{gewstatikdiv}.

A form $\eta H\in\eta\calR^{q+1}(\pcJbqpe{0}{s-1})\boxplus\eta\check{\calR}^{q+1,1}_{s-1}$ belongs to $\qh{1}{q+1}{<\Nh-1}{}$
and the assumptions on $\tau$ yield
$$\rot(\mu\eta H)=C_{\rot,\eta}H+\rot(\mud\eta H)\in\qLzom{q+2}{<\Nh+\tau}\subset\qLzom{q+2}{s}\qquad.$$
Thus $\DIV$ is well defined and clearly linear and continuous. By Lemma \ref{intdirichleteps} we obtain $\mu N(\DIV)\subset\dH{q+1}{}{\mu^\me}(\Omega)$\,.
Applying the regularity result {\paulydissregcoraussen} we achieve $H\in\mu^\me\dH{q+1}{}{\mu^\me}(\Omega)\subset\qh{1}{q+1}{<\Nh-1}{}(A_{r_1})$ and therefore
$$\pdiv H=0\qtext{,}\rot H=-\rot(\mud H)\in\qLz{q+2}{<\Nh+\tau}(A_{r_2})\subset\qLz{q+2}{s}(A_{r_2})\quad,$$
which implies $\mu^\me\dH{q+1}{}{\mu^\me}(\Omega)\subset N(\DIV)$ by Lemma \ref{rotdivintbarzerleg}.

So it remains to show that $\DIV$ is surjective. Let $F\in\bDqsom$\,.
We follow exactly the arguments in Lemma \ref{gewstatikdiv} leading to the ansatz \eqref{Hansatz}.
By Corollary \ref{dirichletsenkrecht} the system
$$\pdiv H=F\qqtext{,}\rot\mu H=0$$
is transformed into
\begin{align}\begin{split}
\rot\mu\Phi&=-\rot(\mu\eta h)=-C_{\rot,\eta}h-\rot(\mud\eta h)\in\bRom{q+2}{s+\tau}{0}\qquad,\\
\pdiv\Phi&=F-\pdiv(\eta h)\in\bDqsom\subset\bDom{q}{>1-\Nh}{0}\qquad.
\end{split}\mylabel{rotmudivPhi}\end{align}
As in the proof of Lemma \ref{gewstatikdiv} we compute $F-\pdiv(\eta h)\in\Lzqom{<\Nh-1}$ and with $\tau\geq-s$ we get additionally
\beq\big(F-\pdiv(\eta h),-\rot(\mu\eta h)\big)\in\Lzqom{}\times\qLzom{q+2}{}\qquad.\mylabel{klassWerteb}\eeq
Thus the generalized classical static solution theory from {\paulydisstopiso} yields some
$$\Phi\in\mu^\me\ronqpeom{-1}\cap\dqpeom{-1}$$
solving the system \eqref{rotmudivPhi}.
Clearly $\eta h\in\qh{1}{q+1}{s-1}{}$ implies $\eta h\in D(\DIV)$\,. So our proof is complete, if we can show
$\Phi\in D(\DIV)$\,. But because of $\pdiv\Phi\in\Lzqsom$ this decomposition of $\Phi$ follows by Lemma \ref{rotdivintbarzerleg}
and the assumptions on $\tau$\,, if e.g.
\beq\rot\Phi\in\qLz{q+2}{s}(A_{r_1})\qquad.\mylabel{rotPhilzs}\eeq
By regularity, e.g. {\paulydissregcoraussen}, $\Phi\in\qh{1}{q+1}{-1}{}(A_{r_1})$\,,
i.e. $\rot\Phi\in\qLz{q+2}{}(A_{r_1})$\,. Thus we may assume $s>0$ in \eqref{rotPhilzs}. Because of
\beq\rot\Phi=-\rot(\mud\Phi)-\rot(\mu\eta h)\in\qLz{q+2}{\min\{\tau,s+\tau\}}(A_{r_1})=\qLz{q+2}{\tau}(A_{r_1})\mylabel{rotPhi}\eeq
we only have to discuss the case $0<\tau<s$\,. From Lemma \ref{rotdivintbarzerleg} (w. l. o. g. $\tau\notin\pI$) we obtain
$$\Phi\in\Lzqpeom{\tau-1}\boxplus\eta\calR^{q+1}(\pcJbqpe{0}{\tau-1})\boxplus\eta\check{\calR}^{q+1,1}_{\tau-1}\qquad.$$
If $N/2\leq\tau<s$ we get $\Phi\in\Lzqpeom{<\Nh-1}$ and with \eqref{rotPhi} $\rot\Phi\in\qLz{q+2}{\Nh}(A_{r_1})$\,, i.e.
$\Phi\in\qh{1}{q+1}{<\Nh-1}{}(A_{r_1})$\,. \eqref{rotPhi} and the assumption $\tau>s-N/2$ show \eqref{rotPhilzs}.
In the other case $\tau<\min\{s,N/2\}$ we have $\eta\calR^{q+1}(\pcJbqpe{0}{\tau-1})\boxplus\eta\check{\calR}^{q+1,1}_{\tau-1}=\{0\}$ and thus $\Phi\in\Lzqpeom{\tau-1}$\,.
Once more we obtain $\Phi\in\qh{1}{q+1}{\tau-1}{}(A_{r_1})$ and with \eqref{rotPhi} $\rot\Phi\in\qLz{q+2}{\min\{2\tau,s+\tau\}}(A_{r_1})$\,.
After finitely many repetitions of this argument either $\ell\tau\geq s$ or $\ell\tau\geq N/2$ with $\ell\in\nz$ holds.
By the arguments given above we achieve \eqref{rotPhilzs} in this case as well.
\end{proof}

\begin{cor}\mylabel{gewstatikdivmukor}
Let $1-N/2<s\notin\pI$ and $\tau>\max\{0,s-N/2\}$\,, $\tau\geq-s$\,. Then
\begin{align*}
&\qquad\mu^\me D(\sideset{_{\mu^\me}}{^{q+1}_{s-1}}\DIV)\\
&=\Big(\big(\ronqpeom{s-1}\cap\mu^\me\dqpeom{s-1}\big)\boxplus\eta\calR^{q+1}(\pcJbqpe{0}{s-1})\boxplus\eta\check{\calR}^{q+1,1}_{s-1}\Big)\cap\ronqpelocnom
\end{align*}
and with $D(\sideset{^\mu}{^{q+1}_{s-1}}\DIV):=\mu^\me D(\sideset{_{\mu^\me}}{^{q+1}_{s-1}}\DIV)$
$$\Abb{\sideset{^\mu}{^{q+1}_{s-1}}\DIV}{D(\sideset{^\mu}{^{q+1}_{s-1}}\DIV)}{\bDqsom}{H}{\pdiv\mu H}$$
is a continuous and surjective Fredholm operator with kernel $\dH{q+1}{}{\mu}(\Omega)$\,.
\end{cor}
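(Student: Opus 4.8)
The plan is to read this corollary as nothing more than Lemma \ref{gewstatikdivmu} applied to the inverse transformation $\mu^\me$, transported back by the multiplication isomorphism $H\mapsto\mu^\me H$. The first thing I would record is that the inverse of a $\tau$-$\pc1$-admissible transformation is again $\tau$-$\pc1$-admissible (see \paulytimeharmdefeps): writing $\mu^\me-\id=-\mu^\me\mud$ and differentiating by means of $\p(\mu^\me)=-\mu^\me(\p\mu)\mu^\me$ gives $\p^\alpha(\mu^\me-\id)=\calO(r^{-|\alpha|-\tau})$ for $|\alpha|\le1$, since $\mu^\me$ is bounded. Hence the hypotheses $1-N/2<s\notin\pI$ and $\tau>\max\{0,s-N/2\}$, $\tau\geq-s$ are exactly the hypotheses of Lemma \ref{gewstatikdivmu} with $\mu$ replaced by $\mu^\me$, and that lemma may be invoked verbatim.

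Applying it yields that $\sideset{_{\mu^\me}}{^{q+1}_{s-1}}\DIV:H\mapsto\pdiv H$ is a continuous surjective Fredholm operator with kernel $\mu\,\dH{q+1}{}{\mu}(\Omega)$ on
\begin{align*}
D(\sideset{_{\mu^\me}}{^{q+1}_{s-1}}\DIV)&=\Big(\big(\mu\ronqpeom{s-1}\cap\dqpeom{s-1}\big)\boxplus\eta\calR^{q+1}(\pcJbqpe{0}{s-1})\boxplus\eta\check{\calR}^{q+1,1}_{s-1}\Big)\\
&\quad\cap\,\mu\ronqpelocnom\qquad.
\end{align*}
By definition $D(\sideset{^\mu}{^{q+1}_{s-1}}\DIV)=\mu^\me D(\sideset{_{\mu^\me}}{^{q+1}_{s-1}}\DIV)$, so multiplication by $\mu$ is a topological isomorphism of $D(\sideset{^\mu}{^{q+1}_{s-1}}\DIV)$ onto $D(\sideset{_{\mu^\me}}{^{q+1}_{s-1}}\DIV)$ with inverse given by multiplication by $\mu^\me$. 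Since $\pdiv\mu H=\sideset{_{\mu^\me}}{^{q+1}_{s-1}}\DIV(\mu H)$, the operator $\sideset{^\mu}{^{q+1}_{s-1}}\DIV$ is the composition of this isomorphism with $\sideset{_{\mu^\me}}{^{q+1}_{s-1}}\DIV$. Continuity, surjectivity and the Fredholm property therefore transfer immediately, and the kernel is $\mu^\me\big(\mu\,\dH{q+1}{}{\mu}(\Omega)\big)=\dH{q+1}{}{\mu}(\Omega)$, as claimed.

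It then remains to make the domain $\mu^\me D(\sideset{_{\mu^\me}}{^{q+1}_{s-1}}\DIV)$ explicit. As multiplication by $\mu^\me$ is bijective it commutes with intersections and maps $\mu\ronqpeom{s-1}$ onto $\ronqpeom{s-1}$, $\dqpeom{s-1}$ onto $\mu^\me\dqpeom{s-1}$, and $\mu\ronqpelocnom$ onto $\ronqpelocnom$. The only delicate point, and the real (if modest) obstacle, is that $\mu^\me$ does not fix the finite-dimensional tower part: it sends a generator $\eta R^{q+1}_J$ to $\eta R^{q+1}_J+(\mu^\me-\id)\eta R^{q+1}_J$. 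The key step is to verify that the correction $(\mu^\me-\id)\eta R^{q+1}_J$ already lies in the Hilbert-space part $V:=\ronqpeom{s-1}\cap\mu^\me\dqpeom{s-1}$; granting this, $\mu^\me\eta\calR^{q+1}(\pcJbqpe{0}{s-1})\boxplus\mu^\me\eta\check{\calR}^{q+1,1}_{s-1}$ and $\eta\calR^{q+1}(\pcJbqpe{0}{s-1})\boxplus\eta\check{\calR}^{q+1,1}_{s-1}$ span the same space modulo $V$ — the same cut-off-independence phenomenon already exploited earlier in the paper — and the asserted identity follows.

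For this verification I would use that each relevant $R^{q+1}_J$ (with $J\in\pcJbqpe{0}{s-1}$, and likewise the forms underlying $\check{\calR}^{q+1,1}_{s-1}$) is $\rot$-free, being of the form $R^{q+1}_J=\rot D^q_{{}_1J}$, harmonic, and of homogeneity degree at most $1-N$, while $\mu^\me-\id$ and its first derivatives decay at rate $\tau$. Since $\eta$ vanishes near $\p\Omega$, the correction is supported away from the boundary and hence satisfies the homogeneous boundary condition defining $\ronqpeom{s-1}$. Multiplication by $\mu^\me-\id$ improves the decay by a factor $r^{-\tau}$, so $(\mu^\me-\id)\eta R^{q+1}_J$, its rotation, and $\pdiv\big(\mud\eta R^{q+1}_J\big)$ all acquire homogeneity roughly $\homd(R^{q+1}_J)-\tau$ resp.\ $\homd(R^{q+1}_J)-1-\tau$; by Remark \ref{intbarkeitTuerme} these lie in $\Lzqpeom{s-1}$, $\qLzom{q+2}{s}$ and $\qLzom{q}{s}$ precisely because $\tau>\max\{0,s-N/2\}$, the compactly supported $\eta'$-terms being harmless. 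The borderline height-one form $\check R^{q+1,1}_{s-1}$ is exactly the case making $\tau>s-N/2$ sharp. This places the correction in $V$ and finishes the proof.
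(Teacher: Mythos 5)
Your proposal is correct and takes essentially the same route as the paper: the paper also obtains the corollary by applying Lemma \ref{gewstatikdivmu} to the (again $\tau$-$\pc{1}$-admissible) transformation $\mu^\me$ and transporting by multiplication, reducing everything to the single inclusion
$$\mu^\me\big(\eta\calR^{q+1}(\pcJbqpe{0}{s-1})\boxplus\eta\check{\calR}^{q+1,1}_{s-1}\big)
\subset\big(\ronqpeom{s-1}\cap\mu^\me\dqpeom{s-1}\big)\boxplus\eta\calR^{q+1}(\pcJbqpe{0}{s-1})\boxplus\eta\check{\calR}^{q+1,1}_{s-1}\qquad,$$
which it justifies only by the words ``with the properties of $\tau$ this is clear''. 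Your decay/homogeneity verification of this inclusion (correction terms $(\mu^\me-\id)\eta R^{q+1}_J$ landing in the Hilbert-space part, with the height-one exceptional form as the borderline case for $\tau>s-N/2$) is precisely the detail the paper leaves implicit.
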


\begin{proof}
The assertions follow from the previous lemma, if we can show the first assertion of this corollary.
But with the properties of $\tau$ this is clear because of
\begin{align*}
&\qquad\mu^\me\big(\eta\calR^{q+1}(\pcJbqpe{0}{s-1})\boxplus\eta\check{\calR}^{q+1,1}_{s-1}\big)\\
&\subset\big(\ronqpeom{s-1}\cap\mu^\me\dqpeom{s-1}\big)\boxplus\eta\calR^{q+1}(\pcJbqpe{0}{s-1})\boxplus\eta\check{\calR}^{q+1,1}_{s-1}\qquad.
\end{align*}
\end{proof}

Analogously we obtain

\begin{lem}\mylabel{gewstatikroteps}
Let $1-N/2<s\notin\pI$ and $\tau>\max\{0,s-N/2\}$\,, $\tau\geq-s$\,. Then
$$\Abb{\sideset{_\eps}{^q_{s-1}}\ROT}{D(\sideset{_\eps}{^q_{s-1}}\ROT)}{\bRqpesom}{E}{\rot E}$$
is a continuous and surjective Fredholm operator on its domain of definition
\begin{align*}
&\qquad D(\sideset{_\eps}{^q_{s-1}}\ROT)\\
&:=\Big(\big(\ronqom{s-1}\cap\eps^\me\dqom{s-1}\big)\boxplus\eta\calD^q(\pcIbq{0}{s-1})\boxplus\eta\check{\calD}^{q,1}_{s-1}\Big)\cap\eps^\me\dqlocnom\\
&\,\subset\,\ronqom{>-\Nh}\cap\eps^\me\dqnom{>-\Nh}
\end{align*}
with kernel $\dhqepsom{}$\,.
\end{lem}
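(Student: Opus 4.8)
We set $\ROT:=\sideset{_\eps}{^q_{s-1}}\ROT$ and argue exactly as in the proof of Lemma \ref{gewstatikdivmu}, interchanging the roles of $\pdiv$ and $\rot$ and of $\mu$ and $\eps$ in the same way in which Lemma \ref{gewstatikrot} was obtained from Lemma \ref{gewstatikdiv}. In particular the simplification noted there persists: the extension by zero of $G\in\bRqpesom$ into $\rN$ remains an element of $\rqpesn$\,, so that no Helmholtz decomposition in the spirit of \eqref{Fdach} is needed, and in the whole space I would invoke \cite[p. 1037, Theorem 5]{sphharm} in place of \cite[p. 1039, Theorem 7]{sphharm}\,. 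First I would verify that $\ROT$ is well defined, linear and continuous. Since the ground-floor forms satisfy $\rot D^q_I=0$\,, the operators $\rot$ and $\pdiv$ map the finite-dimensional tower part $\eta\calD^q(\pcIbq{0}{s-1})\boxplus\eta\check{\calD}^{q,1}_{s-1}\subset\qh{1}{q}{<\Nh-1}{}$ to compactly supported forms; together with Corollary \ref{dirichletsenkrechteps} and Remark \ref{intbarkeitTuerme} this guarantees $\rot E\in\bRqpesom$\,, and continuity is immediate from the finite dimensionality of the tower part.

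For the kernel I would proceed as in Lemma \ref{gewstatikdivmu}\,. If $E\in N(\ROT)$ then $\rot E=0$\,, and the constraint $\eps^\me\dqlocnom$ in the domain together with the membership in $\ronqom{>-\Nh}$ forces $E\in\dhqepsom{-\Nh}$\,, which Lemma \ref{intdirichleteps} upgrades to $E\in\dhqepsom{}$\,. Conversely, every Dirichlet form $E\in\dhqepsom{}$ lies in $\qh{1}{q}{<\Nh-1}{}(A_{r_1})$ by the regularity result \paulydissregcoraussen\,, so that $\rot E=0$ and $\pdiv E=-\pdiv(\epsd E)\in\qLz{q-1}{<\Nh+\tau}(A_{r_1})$\,; Lemma \ref{rotdivintbarzerleg} then places $E$ in the domain decomposition, whence $E\in N(\ROT)$ and $N(\ROT)=\dhqepsom{}$\,.

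The substance is surjectivity. Given $G\in\bRqpesom$ I would extend it by zero to $\hat G\in\rqpesn$\,, subtract finitely many tower-corrections built from $R^{q+1}_J$ with $J\in\pcJbqpe{1}{s-2}$ (exactly as $f$ is corrected in \eqref{kleinfdef}) to land in the range of the whole-space operator of \cite[p. 1037, Theorem 5]{sphharm}\,, and solve $\rot e=\hat G$ in $\rN$\,; in the exceptional case $q=N-1$\,, $s>N/2$ the extra orthogonality to $*\Eins\cong\turmr{N}{0}{0}{1}{+}$ is arranged with the help of $\turmr{N}{2}{0}{1}{-}$\,. With the ansatz $E:=\eta e+\Phi$ in the spirit of \eqref{Hansatz} the system $\rot E=G$\,, $\pdiv\eps E=0$ transforms, using Corollary \ref{dirichletsenkrechteps}\,, into
\begin{align*}
\rot\Phi&=G-\rot(\eta e)\qquad,\\
\pdiv\eps\Phi&=-\pdiv(\eps\eta e)=-C_{\pdiv,\eta}e-\pdiv(\epsd\eta e)\qquad,
\end{align*}
whose right-hand sides lie in the spaces required by the generalized classical static solution theory \paulydisstopiso (here $\tau\geq-s$ secures $\lz$-valued data, as in \eqref{klassWerteb}), yielding $\Phi\in\ronqom{-1}\cap\eps^\me\dqom{-1}$\,.

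The remaining step, showing $\Phi\in D(\ROT)$\,, is the one I expect to be the main obstacle, and it is the exact analogue of the bootstrap in Lemma \ref{gewstatikdivmu}\,. The difficulty is that Lemma \ref{rotdivintbarzerleg} requires decay of $\pdiv\Phi$\,, while only $\pdiv\eps\Phi$ is directly controlled; one must extract the weight of $\pdiv\Phi=\pdiv\eps\Phi-\pdiv(\epsd\Phi)$ from the decay order $\tau$ of $\epsd$\,. Starting from $\Phi\in\qh{1}{q}{-1}{}(A_{r_1})$ via \paulydissregcoraussen one gains a factor $\tau$\,, feeds the improved weight back through Lemma \ref{rotdivintbarzerleg}\,, and iterates; after finitely many $\tau$-steps either $\ell\tau\geq s$ or $\ell\tau\geq N/2$ with $\ell\in\nz$\,, and the hypotheses $\tau>\max\{0,s-N/2\}$ and $\tau\geq-s$ close the bootstrap. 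A final application of Lemma \ref{rotdivintbarzerleg} then places $\Phi$ in the asserted decomposition, so that $E=\eta e+\Phi\in D(\ROT)$ with $\rot E=G$\,, proving surjectivity.
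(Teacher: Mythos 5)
Your proposal is correct and takes essentially the same route as the paper: the paper offers no separate proof of this lemma, introducing it with ``Analogously we obtain'', i.e. one runs the proof of Lemma \ref{gewstatikdivmu} with the roles of $\rot$/$\pdiv$ and $\eps$/$\mu$ interchanged exactly as Lemma \ref{gewstatikrot} was obtained from Lemma \ref{gewstatikdiv}. Your spelled-out version --- zero extension of $G$ without a Helmholtz decomposition, tower corrections via $R^{q+1}_J$\,, $J\in\pcJbqpe{1}{s-2}$\,, the whole-space result \cite[p. 1037, Theorem 5]{sphharm} in place of \cite[p. 1039, Theorem 7]{sphharm}\,, the ansatz $E=\eta e+\Phi$ with \paulydisstopiso\,, and the finite $\tau$-step bootstrap recovering the weight of $\pdiv\Phi$ from that of $\pdiv\eps\Phi$ via Lemma \ref{rotdivintbarzerleg} --- is precisely the intended argument.
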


\begin{cor}\mylabel{gewstatikrotepskor}
Let $1-N/2<s\notin\pI$ and $\tau>\max\{0,s-N/2\}$\,, $\tau\geq-s$\,. Then
\begin{align*}
&\qquad \eps^\me D(\sideset{_{\eps^\me}}{^q_{s-1}}\ROT)\\
&=\Big(\big(\eps^\me\ronqom{s-1}\cap\dqom{s-1}\big)\boxplus\eta\calD^q(\pcIbq{0}{s-1})\boxplus\eta\check{\calD}^{q,1}_{s-1}\Big)\cap\dqlocnom
\end{align*}
and with $D(\sideset{^\eps}{^q_{s-1}}\ROT):=\eps^\me D(\sideset{_{\eps^\me}}{^q_{s-1}}\ROT)$
$$\Abb{\sideset{^\eps}{^q_{s-1}}\ROT}{D(\sideset{^\eps}{^q_{s-1}}\ROT)}{\bRqpesom}{E}{\rot\eps E}$$
is a continuous and surjective Fredholm operator with kernel $\eps^\me\dH{q}{}{\eps^\me}(\Omega)$\,.
\end{cor}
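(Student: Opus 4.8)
The plan is to deduce this corollary from Lemma \ref{gewstatikroteps} in precisely the way that Corollary \ref{gewstatikdivmukor} is obtained from Lemma \ref{gewstatikdivmu}. First I would apply Lemma \ref{gewstatikroteps} with the transformation $\eps$ there replaced by $\eps^\me$ (which is again a $\tau$-$\pc{1}$-admissible transformation, so all hypotheses remain in force). This yields that
$$\Abb{\sideset{_{\eps^\me}}{^q_{s-1}}\ROT}{D(\sideset{_{\eps^\me}}{^q_{s-1}}\ROT)}{\bRqpesom}{E}{\rot E}$$
is continuous, surjective and Fredholm with kernel $\dH{q}{}{\eps^\me}(\Omega)$. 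Since $\eps$ and $\eps^\me$ are bounded, symmetric and uniformly positive definite, multiplication by $\eps^\me$ is a topological isomorphism carrying $D(\sideset{_{\eps^\me}}{^q_{s-1}}\ROT)$ onto $D(\sideset{^\eps}{^q_{s-1}}\ROT)$, and the operator $\sideset{^\eps}{^q_{s-1}}\ROT$, $E\mapsto\rot\eps E$, is nothing but $\sideset{_{\eps^\me}}{^q_{s-1}}\ROT$ pre-composed with the inverse isomorphism (multiplication by $\eps$), because $\rot\eps E=\rot(\eps\eps^\me\hat E)=\rot\hat E$ for $E=\eps^\me\hat E$. Hence continuity, surjectivity, the Fredholm property and the target space $\bRqpesom$ transfer verbatim, while the kernel is the image under $\eps^\me$ of $\dH{q}{}{\eps^\me}(\Omega)$, i.e.\ $\eps^\me\dH{q}{}{\eps^\me}(\Omega)$, as claimed.

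Thus, exactly as in Corollary \ref{gewstatikdivmukor}, the only remaining point is the explicit description of the domain $\eps^\me D(\sideset{_{\eps^\me}}{^q_{s-1}}\ROT)$. Writing out the domain supplied by Lemma \ref{gewstatikroteps} applied with $\eps^\me$ (so that the literal symbol $\eps^\me$ occurring there becomes $\eps$) and then applying $\eps^\me$, I would use $\eps^\me\eps=\id$ to pass $\eps^\me$ through the intersection with $\eps\dqlocnom$, turning it into $\dqlocnom$, together with the identity
$$\eps^\me\big(\ronqom{s-1}\cap\eps\dqom{s-1}\big)=\eps^\me\ronqom{s-1}\cap\dqom{s-1}.$$
What is then left is the key inclusion
$$\eps^\me\big(\eta\calD^q(\pcIbq{0}{s-1})\boxplus\eta\check{\calD}^{q,1}_{s-1}\big)\subset\big(\eps^\me\ronqom{s-1}\cap\dqom{s-1}\big)\boxplus\eta\calD^q(\pcIbq{0}{s-1})\boxplus\eta\check{\calD}^{q,1}_{s-1},$$
which is the precise rotation-analogue of the divergence inclusion used before.

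To establish it I would split, for a tower-form $\eta D$ from the finite-dimensional space on the left,
$$\eps^\me\eta D=\eta D+(\eps^\me-\id)\eta D.$$
The first summand already lies in the tower part. For the second, the crucial observation is that $\eps^\me-\id=-\eps^\me\epsd$ decays like $r^{-\tau}$ together with its first derivative, whereas by Remark \ref{intbarkeitTuerme} the smooth homogeneous forms $\eta D$ lie, with all derivatives, in a fixed weighted space; multiplication by $\eps^\me-\id$ therefore gains $\tau$ orders of decay. The hypotheses $\tau>\max\{0,s-N/2\}$ and $\tau\geq-s$ are exactly what is needed to place $(\eps^\me-\id)\eta D$, along with $\rot\eps(\eps^\me-\id)\eta D=-\rot\epsd\eta D$ and $\pdiv(\eps^\me-\id)\eta D$, into the weighted spaces $\Lzqom{s-1}$, $\Lzqpeom{s}$ and $\qLzom{q-1}{s}$ that define $\eps^\me\ronqom{s-1}\cap\dqom{s-1}$. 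I expect the only (purely technical) obstacle to be this weight bookkeeping: confirming that under the stated $\tau$-conditions the rotation and divergence of the correction term indeed land in the asserted spaces, just as was carried out in the divergence case of Corollary \ref{gewstatikdivmukor}.
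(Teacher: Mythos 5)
Your proposal is correct and takes essentially the same route as the paper: the paper proves only the divergence analogue (Corollary \ref{gewstatikdivmukor}), by applying the preceding lemma with the inverse transformation and reducing everything to the single inclusion of the tower space, multiplied by the inverse transformation, into the decomposed domain — and your key inclusion is its exact rotation counterpart, with the operator statements transferred through multiplication by $\eps^\me$ just as the paper transfers them via $\sideset{^\eps}{}\MAX=\sideset{_{\eps^\me}}{}\MAX\,\eps$. Your splitting $\eps^\me\eta D=\eta D+(\eps^\me-\id)\eta D$ and the $\tau$-decay bookkeeping merely spell out what the paper dismisses as clear ``with the properties of $\tau$''.
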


\begin{rem}\mylabel{gewstatikWertebeins}
Let the assumptions of Lemma \ref{gewstatikdivmu} be fulfilled
and additionally $\tilde{\eps}$\,, $\tilde{\mu}$ be
two $\tilde{\tau}$-$\pc{1}$-admissible transformations with $\tilde{\tau}>0$\,.
Then we can characterize the ranges of $\sideset{_\mu}{^{q+1}_{s-1}}\DIV$\,,
$\sideset{^\mu}{^{q+1}_{s-1}}\DIV$ resp. $\sideset{_\eps}{^q_{s-1}}\ROT$\,, $\sideset{^\eps}{^q_{s-1}}\ROT$ by
$$\bDqsom=\dqsnom\cap\dH{q}{}{\tilde{\eps}}(\Omega)^\bot\qtext{resp.}
\bRqpesom=\ronqpesnom\cap\dH{q+1}{}{\tilde{\mu}}(\Omega)^{\bot_{\tilde{\mu}}}$$
as well.
\end{rem}

\begin{proof}
By Corollary \ref{dirichletsenkrechteps} all operators are still well defined.
Let us consider e.g. $\sideset{_\mu}{^{q+1}_{s-1}}\DIV$ from Lemma \ref{gewstatikdivmu}.
Only the argument showing surjectivity has to be changed. Now \eqref{rotmudivPhi} and \eqref{klassWerteb} are replaced by
$$\big(F-\pdiv(\eta h),-\rot(\mu\eta h)\big)\in\big(\dqnom{}\cap\dH{q}{}{\tilde{\eps}}(\Omega)^\bot\big)\times\big(\pr{q+2}{}{0}{\circ}(\Omega)\cap\dH{q+2}{}{}(\Omega)^\bot\big)$$
but with {\paulytimeharmlzzerl} we see that the latter set equals
$$\ol{\pdiv\Dqpeom{}}\times\ol{\rot\Ronqpeom{}}$$
and thus is independent of $\tilde{\eps}$\,.

Similarly we prove the assertion concerning the range of $\sideset{_\eps}{^q_{s-1}}\ROT$\,.
\end{proof}

\section{Generalized electro-magneto statics}

For $s>1-N/2$ we put \mylabel{statics}
$$\bWqsom:=\bDom{q-1}{s}{0}\times\bRqpesom\times\cz^{d^q}$$
and choose $d^q$ continuous linear functionals $\Phi^\ell_\nu$ on
$$\big(\rqom{s-1}\cap\nu^\me\dqom{s-1}\big)\boxplus\eta\calD^q(\pcIbq{0}{s-1})\boxplus\eta\check{\calD}^{q,1}_{s-1}$$
with
$$\dH{q}{}{\nu}(\Omega)\cap\bigcap_{\ell=1}^{d^q}N(\Phi^\ell_\nu)=\{0\}$$
for some given $0$-admissible transformation $\nu$\,. We set $\Phi_\nu:=(\Phi_\nu^1\,\cdot\,,\dots,\Phi_\nu^{d^q}\,\cdot\,)$ and obtain

\begin{theo}\mylabel{gewstatikinjektivrotdiv}
Let $s\in(1-N/2,\infty)\ohne\pI$ and $\tau>\max\{0,s-N/2\}$\,, $\tau\geq-s$ as well as
$$D(\sideset{_\eps}{^q_{s-1}}\MAX):=\big(\ronqom{s-1}\cap\eps^\me\dqom{s-1}\big)\boxplus\eta\calD^q(\pcIbq{0}{s-1})\boxplus\eta\check{\calD}^{q,1}_{s-1}\qquad.$$
Then the operator
$$\Abb{\sideset{_\eps}{^q_{s-1}}\MAX}{D(\sideset{_\eps}{^q_{s-1}}\MAX)}{\bWqsom}{E}{\big(\pdiv\eps E,\rot E,\Phi_\eps(E)\big)}$$
is a topological isomorphism.
\end{theo}

\begin{rem}\mylabel{gewstatikinjektivrotdivbem}
Let $\nu$ be a $0$-admissible and $\lambda$ be a $\tau$-$\pc{1}$-admissible
transformation on $q$-forms. Moreover, let
$\big\{{}_\theta h_\ell\big\}_{\ell=1}^{d^q}$ for $\theta\in\{\eps,\lambda\}$
be some basis of $\dH{q}{}{\theta}(\Omega)$\,.
Then for weights $s>2-N/2$ we can choose, for instance, the functionals
$\Phi^\ell_\eps(E):=\skp{\nu E}{{}_{\eps}h_\ell}_{\Lzqom{}}$ or
$\Phi^\ell_\eps(E):=\skp{\eps E}{{}_\lambda h_\ell}_{\Lzqom{}}$ or
$\Phi^\ell_\eps(E):=\skp{\lambda E}{{}_\lambda h_\ell}_{\Lzqom{}}$\,.
\end{rem}

\begin{cor}\mylabel{gewstatikinjektivrotdivcor}
Let the assumptions of Theorem \ref{gewstatikinjektivrotdiv} be satisfied. Then
\begin{align*}
D(\sideset{^\eps}{^q_{s-1}}\MAX)&:=\eps^\me D(\sideset{_{\eps^\me}}{^q_{s-1}}\MAX)\\
&\;=\big(\eps^\me\ronqom{s-1}\cap\dqom{s-1}\big)\boxplus\eta\calD^q(\pcIbq{0}{s-1})\boxplus\eta\check{\calD}^{q,1}_{s-1}
\end{align*}
holds and
$$\Abb{\sideset{^\eps}{^q_{s-1}}\MAX}{D(\sideset{^\eps}{^q_{s-1}}\MAX)}{\bWqsom}{E}{\big(\pdiv E,\rot\eps E,\Phi_{\eps^\me}(\eps E)\big)}$$
is a topological isomorphism.
\end{cor}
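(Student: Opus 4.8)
The plan is to deduce this corollary from Theorem~\ref{gewstatikinjektivrotdiv} by exactly the transformation argument already employed for the divergence operator in Corollary~\ref{gewstatikdivmukor}. The key is the factorization
$$\sideset{^\eps}{^q_{s-1}}\MAX=\sideset{_{\eps^\me}}{^q_{s-1}}\MAX\circ\eps\qquad,$$
which holds because for $E\in\eps^\me D(\sideset{_{\eps^\me}}{^q_{s-1}}\MAX)$ one has $\eps E\in D(\sideset{_{\eps^\me}}{^q_{s-1}}\MAX)$ and
$$\sideset{_{\eps^\me}}{^q_{s-1}}\MAX(\eps E)=\big(\pdiv\eps^\me\eps E,\rot\eps E,\Phi_{\eps^\me}(\eps E)\big)=\big(\pdiv E,\rot\eps E,\Phi_{\eps^\me}(\eps E)\big)\qquad.$$
First I would observe that $\eps^\me$ is again symmetric, uniformly positive definite and $\tau$-$\pc{1}$-admissible \big(since $\eps^\me-\id=-\eps^\me\epsd$ decays with order $\tau$\big), and that neither the target space $\bWqsom$ nor the number $d^q$ depends on the transformation. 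Hence Theorem~\ref{gewstatikinjektivrotdiv}, applied verbatim with $\eps^\me$ in place of $\eps$ and functionals $\Phi_{\eps^\me}$ chosen as in the preamble to that theorem, already tells us that $\sideset{_{\eps^\me}}{^q_{s-1}}\MAX$ is a topological isomorphism onto $\bWqsom$.

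As in Corollary~\ref{gewstatikdivmukor}, everything then reduces to verifying the stated description of the domain, i.e.
$$\eps^\me D(\sideset{_{\eps^\me}}{^q_{s-1}}\MAX)=\big(\eps^\me\ronqom{s-1}\cap\dqom{s-1}\big)\boxplus\eta\calD^q(\pcIbq{0}{s-1})\boxplus\eta\check{\calD}^{q,1}_{s-1}\qquad.$$
The Hilbert-space summand is immediate, since $E\in\ronqom{s-1}\cap\eps\dqom{s-1}$ is equivalent to $\eps^\me E\in\eps^\me\ronqom{s-1}\cap\dqom{s-1}$ and $\eps^\me$ is a bijection. For the finite-dimensional tower part I would write, for each tower-form appearing in $\eta\calD^q(\pcIbq{0}{s-1})\boxplus\eta\check{\calD}^{q,1}_{s-1}$,
$$\eps^\me\eta D^q_I=\eta D^q_I+(\eps^\me-\id)\eta D^q_I\qquad,$$
where the first summand already lies in $\eta\calD^q(\pcIbq{0}{s-1})\boxplus\eta\check{\calD}^{q,1}_{s-1}$ and the remainder $(\eps^\me-\id)\eta D^q_I$ carries the additional decay order $\tau$. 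Exactly as in Corollary~\ref{gewstatikdivmukor}, the hypotheses $\tau>\max\{0,s-N/2\}$ and $\tau\geq-s$, combined with the integrability data for these finitely many tower-forms from Remark~\ref{intbarkeitTuerme}, push the remainder into $\eps^\me\ronqom{s-1}\cap\dqom{s-1}$; this yields the inclusion ``$\subseteq$''. Running the same computation with the roles of $\eps$ and $\eps^\me$ interchanged gives ``$\supseteq$'', and hence equality.

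Once the domain is identified, multiplication by $\eps$ is a bounded bijection with bounded inverse $\eps^\me$, so it maps $D(\sideset{^\eps}{^q_{s-1}}\MAX)$ bicontinuously, in the respective graph norms, onto $D(\sideset{_{\eps^\me}}{^q_{s-1}}\MAX)$. Composing this topological isomorphism with $\sideset{_{\eps^\me}}{^q_{s-1}}\MAX$ and invoking the factorization above shows that $\sideset{^\eps}{^q_{s-1}}\MAX$ is a topological isomorphism, as claimed. I expect the main obstacle to be the bookkeeping in the second paragraph: one must check that the perturbed tower-forms $(\eps^\me-\id)\eta D^q_I$ possess not only the correct weighted $\lz$-integrability but also the correct $\rot$- and $\pdiv$-behaviour to belong to $\eps^\me\ronqom{s-1}\cap\dqom{s-1}$. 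This is precisely where the decay hypotheses on $\tau$ are consumed, via the homogeneity estimates of Remark~\ref{intbarkeitTuerme} and the product rule for $\rot$ and $\pdiv$ applied to $\epsd$ times a tower-form, just as in the proof of Corollary~\ref{gewstatikdivmukor}.
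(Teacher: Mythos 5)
Your proposal is correct and follows essentially the same route as the paper: the paper likewise deduces the corollary from the factorization $\sideset{^\eps}{^q_{s-1}}\MAX=\sideset{_{\eps^\me}}{^q_{s-1}}\MAX\,\eps$ (noting that $\eps^\me$ is again $\tau$-$\pc{1}$-admissible), together with the domain identity obtained by writing $\eps^\me$ applied to the finitely many tower-forms as the tower-form plus a perturbation that the decay hypotheses on $\tau$ push into the weighted Hilbert-space summand, exactly as in Corollary \ref{gewstatikdivmukor}. Your write-up merely makes explicit the steps the paper compresses into ``from the properties of $\tau$ we get easily \dots''.
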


\begin{proof}
By Corollary \ref{gewstatikdivmukor} and Lemma \ref{gewstatikroteps} $\sideset{_\eps}{^q_{s-1}}\MAX$ is continuous and with our assumptions
clearly injective. Thus by the bounded inverse theorem $\sideset{_\eps}{^q_{s-1}}\MAX$ is a topological isomorphism, if it is surjective. Let
$(f,G,\gamma)\in \bWqsom$\,. Then a combination of Corollary \ref{gewstatikdivmukor} and Lemma \ref{gewstatikroteps} yields some
$\hat{E}\in D(\sideset{_\eps}{^q_{s-1}}\MAX)$ solving $\rot\hat{E}=G$ and $\pdiv\eps\hat{E}=f$
and we are free in adding a Dirichlet form from $\dhqepsom{}$ to $\hat{E}$\,. By our assumptions
$$\Abb{\phi}{\dhqepsom{}}{\cz^{d^q}}{E}{\Phi_\eps(E)}$$
is a topological isomorphism. Therefore
$E:=\hat{E}+\phi^\me\big(\gamma-\Phi_\eps(\hat{E})\big)$
is the unique solution of $\sideset{_\eps}{^q_{s-1}}\MAX E=(f,G,\gamma)$\,.
From the properties of $\tau$ we get easily
$\sideset{^\eps}{^q_{s-1}}\MAX=\sideset{_{\eps^\me}}{^q_{s-1}}\MAX\eps$\,,
such that this operator is also a topological isomorphism, which proves the corollary.

If $s>2-N/2$ we have $D(\sideset{_\eps}{^q_{s-1}}\MAX)\subset\Lzqom{>1-\Nh}$\,. Then by Lemma \ref{intdirichleteps} and Remark \ref{intdirichletepsbem}
the scalar products in Remark \ref{gewstatikinjektivrotdivbem} are well defined and possible choices for $\Phi^\ell_\eps$\,.
\end{proof}

Actually we are interested in a (electro-magneto) static solution theory suited for the operator $M$ from \eqref{Mdef}. Moreover, we want
to define higher powers of a special static solution operator. The main tool for the iteration process are the tower-forms from section \ref{sectiontower}.
(Until now essentially we only needed the ground-forms of height zero to establish our solution theory.)
Thus we expect that the heights of the tower-parts of our solutions will grow in each step of the iteration, which implies decreasing
integrability features of these solutions. But to guarantee uniqueness of the solutions, we always have to project along
the Dirichlet forms. Therefore it makes no sense to proceed with orthogonality constraints with respect to the Dirichlet forms anymore and
we are forced to work with orthogonality constraints utilizing forms with compact supports in $\rN$\,, such that the duality products with our
tower-forms are still well defined.

To this end we introduce from \cite[p. 41]{decomposition} for all $q$ finitely many smooth forms
$$\bonqom:=\big\{\bon{q}_1,\dots,\bon{q}_{d^q}\big\}\qqtext{,}\bqom:=\{b^q_1,\dots,b^q_{d^q}\}\qquad,$$
where the latter set is only defined for $q\neq1$\,, with \ul{com}p\ul{act} resp. \ul{bounded} support in $\Omega$ and the following properties:
For all $0$-admissible transformations $\nu$
\begin{itemize}
\item $\bonqom\subset\ronqnom{\vox}$ is linearly independent modulo $\ol{\rot\pR{q-1}{}{}{\circ}(\Omega)}$ and
$$\dH{q}{}{\nu}(\om)\cap\bonqom^{\bot_\nu}=\{0\}$$
holds. The orthogonal projections of $\bonqom$ in $\ronqnom{}$
along $\ol{\rot\pR{q-1}{}{}{\circ}(\Omega)}$ on $\dH{q}{}{\nu}(\om)$ form a basis of the Dirichlet forms $\dH{q}{}{\nu}(\om)$\,;
\item (for $q\neq1$) $\bqom\subset\dqnom{\vox}$ is linearly independent modulo $\ol{\pdiv\Dqpeom{}}$ and
$$\dH{q}{}{\nu}(\om)\cap\bqom^{\bot}=\{0\}$$
holds. The orthogonal projections of $\nu^\me\bqom$ in $\nu^\me\dqnom{}$ on $\dH{q}{}{\nu}(\om)$
along $\nu^\me\ol{\pdiv\Dqpeom{}}$ form a basis of the Dirichlet forms $\dH{q}{}{\nu}(\om)$\,.
\end{itemize}

To guarantee the existence of these forms \big(see \cite[p. 40]{decomposition}\big)
we need another (stronger) assumption on the boundary $\p\Omega$\,, i.e. $\Omega$ is Lipschitz homeomorphic to a smooth
exterior domain with boundary. We will call this property of $\Omega$ the `static Maxwell property' ({\sf SMP}), and this property
implies the {\sf MLCP}.

We note that the properties of $\bonqom$ and $\bqom$ are mentioned in \cite{decomposition} only in the case $\nu=\id$\,.
But using the Helmholtz decompositions {\paulytimeharmlzzerl} we
can easily show that these properties hold true in the general case as well.

From now on we may assume additionally that our exterior domain $\Omega$ has the {\sf SMP} and thus in particular the {\sf MLCP}
and that w.~l.~o.~g. for all $q$ all supports of the forms in $\bonqom$ and $\bqom$
are compact subsets of $U_{r_0}$\,. We remark by definition then
$$\supp\eta\cap\Big(\bigcup_{\ell=1}^{d^q}\supp\bon{q}_\ell\cup\bigcup_{k=1}^{d^q}\supp b^q_k\Big)=\emptyset\qquad.$$

In the following we will use these special forms $\bonqom$ and $\bqom$ to project along the Dirichlet forms.
Because of their bounded supports we clearly have for all $s\in\rz$ and with closures in $\Lzqsom$
\begin{align*}
\ol{\pdiv\pdi{q+1}{s-1}{}{}(\Omega)}\cup\ol{\pdiv\pDi{q+1}{s}{}{}(\Omega)}&\subset\bonqom^\bot\qquad,\\
\ol{\rot\pr{q-1}{s-1}{}{\circ}(\Omega)}\cup\ol{\rot\pR{q-1}{s}{}{\circ}(\Omega)}&\subset\bqom^\bot\qquad.
\end{align*}
Moreover, with closures in $\Lzqom{}$
\begin{align}
\ol{\pdiv\pDi{q+1}{}{}{}(\Omega)}=\bDom{q}{}{0}=\dqnom{}\cap\dH{q}{}{\nu}(\om)^\bot&=\dqnom{}\cap\bonqom^\bot\quad,\mylabel{wertebeins}\\
\ol{\rot\pR{q-1}{}{}{\circ}(\Omega)}=\bRom{q}{}{0}=\ronqnom{}\cap\dH{q}{}{\nu}(\om)^{\bot_\nu}&=\ronqnom{}\cap\bqom^\bot\mylabel{wertebzwei}
\end{align}
hold true. The first two equations in each term follow by {\paulytimeharmlzzerl} and one inclusion of the
third equation in each term is trivial. Hence, if we look, for example,
at the $q$-form $F\in\dqnom{}\cap\bonqom^\bot$\,, we decompose $F$ according to
the Helmholtz decomposition {\paulytimeharmlzzerl}
$$F=f+E\in\ol{\pdiv\pDi{q+1}{}{}{}(\Omega)}\oplus\dhqom$$
and obtain $F-f\in\dhqom\cap\bonqom^\bot=\{0\}$ by the properties of $\bonqom$\,.

Now we are able to characterize the ranges of our operators
$\sideset{_\mu}{^{q+1}_{s-1}}\DIV$\,, $\sideset{^\mu}{^{q+1}_{s-1}}\DIV$ and
$\sideset{_\eps}{^q_{s-1}}\ROT$\,, $\sideset{^\eps}{^q_{s-1}}\ROT$
even by orthogonality constraints on $\bonqom$ and $\bqpeom$\,.

\begin{cor}\mylabel{gewstatikWertebzwei}
Let the assumptions of Lemma \ref{gewstatikdivmu} be fulfilled. Then
$$\bDqsom=\dqsnom\cap\dhqepsom{}^\bot=\dqsnom\cap\bonqom^\bot$$
and in the case $q\neq0$
$$\bRqpesom=\ronqpesnom\cap\dH{q+1}{}{\mu}(\om)^{\bot_\mu}=\ronqpesnom\cap\bqpeom^\bot\quad.$$
\end{cor}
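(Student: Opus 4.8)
The statement consists of two pairs of equalities: one for $\bDqsom$ and, only when $q\neq0$, a dual one for $\bRqpesom$. Since the tower calculus of Section~\ref{sectiontower} is symmetric under the Hodge star, I would prove the $\bD$-assertion in detail and obtain the $\bR$-assertion by the mirror argument, exchanging the roles of $\rot$ and $\pdiv$, of $\bonqom$ and $\bqpeom$, and of $\bot$ and $\bot_\mu$. The first equality $\bDqsom=\dqsnom\cap\dhqepsom{}^\bot$ is immediate: Remark~\ref{gewstatikWertebeins} asserts $\bDqsom=\dqsnom\cap\dH{q}{}{\tilde{\eps}}(\Omega)^\bot$ for \emph{every} $\tilde{\tau}$-$\pc{1}$-admissible $\tilde{\eps}$ with $\tilde{\tau}>0$, so the choice $\tilde{\eps}=\eps$ (admissible with $\tilde{\tau}=\tau>0$) gives the claim, while $\tilde{\eps}=\id$ recovers the definition \eqref{bDbRdef}. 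Thus the work lies entirely in the second equality $\bDqsom=\dqsnom\cap\bonqom^\bot$, whose weight-zero instance is precisely \eqref{wertebeins}.

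For the inclusion $\bDqsom\subseteq\dqsnom\cap\bonqom^\bot$ I would use surjectivity of the static divergence operator. By Lemma~\ref{gewstatikdiv} every $F\in\bDqsom$ has the form $F=\pdiv H$ with $H$ in the domain of $\DIV^{q+1}_{s-1}$. Each $\bon{q}_\ell$ lies in $\ronqnom{\vox}$, i.e.\ it has compact support in $\Omega$ and satisfies $\rot\bon{q}_\ell=0$; hence the weighted partial integration \paulytimeharmpartint yields
$$\skp{F}{\bon{q}_\ell}_{\Lzqom{}}=\skp{\pdiv H}{\bon{q}_\ell}_{\Lzqom{}}=\skp{H}{\rot\bon{q}_\ell}_{\Lzqom{}}=0,$$
the boundary term vanishing by the compact support of $\bon{q}_\ell$. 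So $F\in\bonqom^\bot$, and together with $F\in\dqsnom$ this gives the inclusion. (Equivalently one may invoke the orthogonality $\ol{\pdiv\pDi{q+1}{s}{}{}(\Omega)}\subset\bonqom^\bot$ recorded just before \eqref{wertebeins}.)

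The reverse inclusion is the heart of the matter. By the defining property of $\bonqom$ (taken with $\nu=\id$) the $\Lzqom{}$-orthogonal projections $h_\ell$ of $\bon{q}_\ell$ along $\ol{\rot\pR{q-1}{}{}{\circ}(\Omega)}$ form a basis of $\dhqom$, so $\bon{q}_\ell=h_\ell+\rho_\ell$ with $\rho_\ell\in\ol{\rot\pR{q-1}{}{}{\circ}(\Omega)}$. Since $\bon{q}_\ell$ has compact support and $h_\ell\in\dhqom\subset\Lzqom{-s}$ by Remark~\ref{intdirichletidbem}, the remainder $\rho_\ell$ also lies in $\Lzqom{-s}$, so every duality product below is meaningful. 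For $F\in\dqsnom\cap\bonqom^\bot$ one computes $\skp{F}{h_\ell}_{\Lzqom{}}=\skp{F}{\bon{q}_\ell}_{\Lzqom{}}-\skp{F}{\rho_\ell}_{\Lzqom{}}=-\skp{F}{\rho_\ell}_{\Lzqom{}}$, so the whole inclusion reduces to showing that a divergence-free $F\in\dqsnom$ is orthogonal to the exact remainder $\rho_\ell\in\ol{\rot\pR{q-1}{}{}{\circ}(\Omega)}\cap\Lzqom{-s}$. I expect this pairing across the weight gap to be the main obstacle. For $s\geq0$ it is harmless: then $F\in\Lzqsom\subseteq\Lzqom{}$ is divergence-free while $\rho_\ell$ is a limit of $\rot$-images, and the vanishing is the weight-zero Helmholtz orthogonality underlying \eqref{wertebeins}--\eqref{wertebzwei}. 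For $1-N/2<s<0$ I would instead solve $\rot\phi=\rho_\ell$ with $\phi$ satisfying the homogeneous boundary condition, which is possible by Lemma~\ref{gewstatikrot} (with $q$ replaced by $q-1$ and $s$ by $-s$), the hypotheses $-s>1-N/2$ and $-s\notin\pI$ holding automatically in this range and $\rho_\ell$ being rotation-free, perpendicular to $\dhqom$, and an element of $\Lzqom{-s}$; then \paulytimeharmpartint gives $\skp{F}{\rho_\ell}_{\Lzqom{}}=\skp{F}{\rot\phi}_{\Lzqom{}}=\skp{\pdiv F}{\phi}_{\Lzqom{}}=0$, using $\pdiv F=0$ and the boundary condition of $\phi$. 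The delicate point is precisely the identification of $\rho_\ell$ as an element of the weight-$(-s)$ range $\bRom{q}{-s}{0}$. Hence $\skp{F}{h_\ell}_{\Lzqom{}}=0$ for all $\ell$, i.e.\ $F\perp\dhqom$ and $F\in\bDqsom$.

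Finally, the $\bR$-assertion is proved verbatim with $\rot$ and $\pdiv$ interchanged. The first equality $\bRqpesom=\ronqpesnom\cap\dH{q+1}{}{\mu}(\om)^{\bot_\mu}$ follows from Remark~\ref{gewstatikWertebeins} with $\tilde{\mu}=\mu$. For the second equality the forms $\bqpeom$ replace $\bonqom$; here one uses that each $b^{q+1}_k$ has compact support and satisfies $\pdiv b^{q+1}_k=0$, which—being the rank-$(q+1)$ analogue of $\bqom$—is available only for $q+1\neq1$, precisely the hypothesis $q\neq0$. Surjectivity of $\ROT^q_{s-1}$ (Lemma~\ref{gewstatikrot}) writes $G\in\bRqpesom$ as $G=\rot E$, and partial integration gives $\skp{G}{b^{q+1}_k}_{\Lzqom{}}=\skp{E}{\pdiv b^{q+1}_k}_{\Lzqom{}}=0$, proving $\bRqpesom\subseteq\bqpeom^\bot$. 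The reverse inclusion proceeds as above, now using the $\mu$-weighted projections from \eqref{wertebzwei} and solving the dual equation at weight $-s$ by Lemma~\ref{gewstatikdiv} to handle the analogous weight-gap pairing via \paulytimeharmpartint.
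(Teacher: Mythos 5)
Your proposal reaches the result, but by a genuinely different route than the paper, and with one step left open. The paper proves the corollary by exhibiting all the sets as ranges of one and the same operator: it re-runs the surjectivity argument of Lemma \ref{gewstatikdivmu}, exactly as in the proof of Remark \ref{gewstatikWertebeins}, with the modified target $\dqsnom\cap\bonqom^\bot$; well-definedness comes from the bounded supports of $\bonqom$, $\bqpeom$, and surjectivity still goes through because the data of the transformed $\lz$-problem satisfies the new orthogonality constraints, which at weight zero coincide, by \eqref{wertebeins} and \eqref{wertebzwei}, with the Dirichlet-form constraints of the classical ranges $\ol{\pdiv\Dqpeom{}}$ and $\ol{\rot\Ronqpeom{}}$, so the classical solution theory applies unchanged. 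You instead prove the set identity $\dqsnom\cap\dhqom^\bot=\dqsnom\cap\bonqom^\bot$ directly from the defining projection property of $\bonqom$: writing $\bon{q}_\ell=h_\ell+\rho_\ell$ with $\{h_\ell\}$ a basis of $\dhqom$ and $\rho_\ell\in\ol{\rot\pR{q-1}{}{}{\circ}(\Omega)}$, everything reduces to $\skp{F}{\rho_\ell}_{\Lzqom{}}=0$ for $F\in\dqsnom$ (note this settles \emph{both} inclusions at once, so your appeal to surjectivity of $\DIV^{q+1}_{s-1}$ in the forward direction is dispensable). Your mechanism for the weight gap -- weight-zero Helmholtz orthogonality for $s\geq0$, and for $1-N/2<s<0$ solving $\rot\phi=\rho_\ell$ at the dual weight $-s$ via Lemma \ref{gewstatikrot} and integrating by parts with \paulytimeharmpartint\ -- is a clean alternative that the paper's proof avoids entirely; the first equalities you obtain from Remark \ref{gewstatikWertebeins} exactly as the paper does.

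The step you yourself call ``the delicate point'' is, as written, a genuine gap: to apply Lemma \ref{gewstatikrot} you need $\rho_\ell\in\bRom{q}{-s}{0}$, and membership in $\ronqnom{-s}$ demands the homogeneous boundary condition \emph{in the weight-$(-s)$ sense}; this does not follow from ``irrotational, perpendicular to $\dhqom$, in $\Lzqom{-s}$'' alone. It closes with tools you already cite: $\rho_\ell=\bon{q}_\ell-h_\ell$, where $\bon{q}_\ell\in\ronqnom{\vox}\subset\ronqnom{-s}$ by compact support, and $h_\ell\in\dhqom=\dH{q}{<\Nh-1}{}(\Omega)$ by Lemma \ref{intdirichletid} (the Lemma itself, not merely Remark \ref{intdirichletidbem}, which only gives the $\Lzqom{-s}$-membership), so $h_\ell\in\ronqnom{-s}$ because $-s<N/2-1$; the orthogonality of $\rho_\ell$ to $\dhqom$ then holds in the correct duality since it holds in $\lz$. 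Two further points should be recorded to make the dual-weight step airtight: (i) for $1-N/2<s<0$ one has $-s-1<N/2-1$, so by Remark \ref{intbarkeitTuerme} and \eqref{ausnahmeformdrei} the tower and exceptional parts of $D(\ROT^{q-1}_{-s-1})$ are absent; hence $\phi$ has pure weight $-s-1$ and \paulytimeharmpartint\ applies directly, whereas otherwise the identity $\skp{F}{\rot\phi}_{\Lzqom{}}=-\skp{\pdiv F}{\phi}_{\Lzqom{}}$ would need separate justification; (ii) in the $\bR$-assertion the decomposition reads $b^{q+1}_k=\mu h_k+\pi_k$ with $h_k\in\dH{q+1}{}{\mu}(\om)$, whose integrability rests on Lemma \ref{intdirichleteps} and hence on $\tau>0$ (available under the hypotheses of Lemma \ref{gewstatikdivmu}), while the analogous membership $\pi_k\in\bDom{q+1}{-s}{0}$ needed for Lemma \ref{gewstatikdiv} is easier, since the divergence spaces carry no boundary condition.
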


\begin{proof}
The first equations in each term have been shown in Remark \ref{gewstatikWertebeins}. To prove the second equations in each term
we use the same arguments as in the proof of Remark \ref{gewstatikWertebeins} combined with \eqref{wertebeins} and \eqref{wertebzwei}.
Thus all sets from above are just different characterizations of the ranges of
$\sideset{_\mu}{^{q+1}_{s-1}}\DIV$ or $\sideset{_\eps}{^q_{s-1}}\ROT$\,.
\end{proof}

\begin{rem}\mylabel{gewstatikinjektivrotdivbemtwo}
Looking once more at Theorem \ref{gewstatikinjektivrotdiv} and Corollary \ref{gewstatikinjektivrotdivcor}
we may represent the range of $\sideset{_\eps}{^q_{s-1}}\MAX$ and $\sideset{^\eps}{^q_{s-1}}\MAX$
with the help of Corollary \ref{gewstatikWertebzwei} in a different manner. Furthermore, for example,
$$\Phi^\ell_\eps(E):=\skp{\eps E}{\bon{q}_\ell}_{\Lzqom{}}\qqtext{or}\Phi^\ell_\eps(E):=\skp{E}{b^{q}_\ell}_{\Lzqom{}}$$
are good choices for $\Phi^\ell_\eps$\,, where the latter is only defined for $q\neq1$\,.
\end{rem}

Using the special forms $\bqpeom$ we have to restrict our considerations from now on to the case $q\neq0$\,.

The latter theorem and the corresponding remarks and corollaries yield by specialization the following electro-magneto static result,
which meets our needs and uses only the forms $\bonqom$ and $\bqom$ instead of $\dhqom$\,:

\begin{theo}\mylabel{gewstatikinjektivrotdivspeziell}
Let $q\neq0$\,, $s\in(1-N/2,\infty)\ohne\pI$ and $\tau>\max\{0,s-N/2\}$\,, $\tau\geq-s$\,. Then the operators
$$\Abb{\sideset{_\eps}{^q_{s-1}}\statMax}{D(\sideset{_\eps}{^q_{s-1}}\MAX)}{\bWqsom}{E}{\Big(\pdiv\eps E,\rot E,\big(\skp{\eps E}{\bon{q}_\ell}_{\Lzqom{}}\big)_{\ell=1}^{d^q}\Big)}\qquad,$$
$$\Abb{\sideset{^\mu}{^{q+1}_{s-1}}\statMax}{D(\sideset{^\mu}{^{q+1}_{s-1}}\MAX)}{\bWqpesom}{H}{\Big(\pdiv H,\rot\mu H,\big(\skp{\mu H}{b^{q+1}_\ell}_{\Lzqpeom{}}\big)_{\ell=1}^{d^{q+1}}\Big)}$$
are topological isomorphisms.
\end{theo}

\section{Powers of a static Maxwell operator}

From now on we only work with the forms $\bonqom$ and $\bqpeom$ since they have bounded supports and
thus we may assume $q\neq0$\,.
Then for arbitrary $s\in\rz$ and $t\in\{\loc,s\}$ the spaces
$$\bDom{q}{t}{0}=\dqtnom\cap\bonqom^\bot\qqtext{,}\bRom{q+1}{t}{0}=\ronqpetnom\cap\bqpeom^\bot$$
are well defined.

In this section we want to define powers of a special static solution operator from Theorem \ref{gewstatikinjektivrotdivspeziell},
which acts on special data $\big((0,G,0),(F,0,0)\big)\in \bWqsom\times \bWqpesom$\,, i.e.
\begin{align*}
\FG&\in\bDqsom\times\bRqpesom\qqtext{,}s>1-N/2\qquad,
\intertext{and maps onto solutions}
(\eps E,\mu H)&\in\bDom{q}{>-\Nh}{0}\times\bRom{q+1}{>-\Nh}{0}\qquad.
\end{align*}
To this end we first study each one of these two operators
$F\mapsto\mu H$ and $G\mapsto\eps E$
separately. Keeping in mind that the interesting case of the classical electro-magneto
static theory is $q=1$\,, we restrict our considerations in this section generally to ranks
$$1\leq q\leq N-2$$
to avoid the discussion of some exceptional cases, which would increase the complexity of notations in this section considerably.

A further specialization of Theorem \ref{gewstatikinjektivrotdivspeziell} shows \mylabel{iteration}

\begin{theo}\mylabel{gewstatikinjektivrotdivnull}
Let $s\in(1-N/2,\infty)\ohne\pI$ and $\tau>\max\{0,s-N/2\}$\,, $\tau\geq-s$\,. Then
\begin{align*}
&\Abb{\sideset{_\eps}{^q_{s-1}}\crot}{D(\sideset{_\eps}{^q_{s-1}}\crot)}{\bRqpesom}{E}{\rot E}\qquad,\\
&\Abb{\sideset{_\mu}{^{q+1}_{s-1}}\cdiv}{D(\sideset{_\mu}{^{q+1}_{s-1}}\cdiv)}{\bDqsom}{H}{\pdiv H}
\end{align*}
are topological isomorphisms on their domains of definition
\begin{align*}
D(\sideset{_\eps}{^q_{s-1}}\crot)&:=\Big(\big(\ronqom{s-1}\cap\eps^\me\dqom{s-1}\big)\boxplus\eta\calD^q(\pcIbq{0}{s-1})\Big)\cap\eps^\me\bDqlocom\quad,\\
D(\sideset{_\mu}{^{q+1}_{s-1}}\cdiv)&:=\Big(\big(\mu^\me\ronqpeom{s-1}\cap\dqpeom{s-1}\big)\boxplus\eta\calR^{q+1}(\pcJbqpe{0}{s-1})\Big)\cap\mu^\me\bRqpelocom\quad.
\end{align*}
\end{theo}

\begin{rem}\mylabel{gewstatikinjektivrotdivnullbem}
The exceptional forms $\eta\check{D}^{q,1}_{s-1}$ and $\eta\check{R}^{q+1,1}_{s-1}$ do no longer occur for those values of $q$\,, since
$$D(\sideset{_\eps}{^q_{s-1}}\crot)\subset\eps^\me\bDqlocom\qqtext{and}D(\sideset{_\mu}{^{q+1}_{s-1}}\cdiv)\subset\mu^\me\bRqpelocom\qquad.$$
Because of the restriction for the ranks $q$ we only have to show that the exceptional forms do not appear
in the case $q=1$ for $\sideset{_\eps}{^q_{s-1}}\crot$ and in the case $q=N-2$ for $\sideset{_\mu}{^{q+1}_{s-1}}\cdiv$\,.
The proof of these facts will be supplied in the appendix.
\end{rem}

Using these two operators we define a static solution operator $\loesn$ acting on
$$\bDqsom\times\bRqpesom$$
by
$$\loesn\FG:=\big((\sideset{_\eps}{^q_{s-1}}\crot)^\me G,(\sideset{_\mu}{^{q+1}_{s-1}}\cdiv)^\me F\big)\qquad.$$
Because the inverses $\loes_{\rot,\eps}:=\eps(\sideset{_\eps}{^q_{s-1}}\crot)^\me$
and $\loes_{\pdiv,\mu}:=\mu(\sideset{_\mu}{^{q+1}_{s-1}}\cdiv)^\me$
have mutually related domains of definition and ranges $\bDom{q}{t}{0}$ and $\bRom{q+1}{t}{0}$
we hope that $\loes_{\rot,\eps}\loes_{\pdiv,\mu}$ and $\loes_{\pdiv,\mu}\loes_{\rot,\eps}$ exist in some sense.
To this end it is necessary to generalize $\loes_{\pdiv,\mu}$ and $\loes_{\rot,\eps}$\,, such that they can be applied to tower-forms.

Before we proceed and formulate our next lemma we need a few new notations.
Let us introduce the maximal degree of homogeneity of an index set $\cI$ by
$$\homg{}{\cI}{}:=\max_{I\in\cI}\homg{}{I}{}\qqtext{,}\homg{}{\emptyset}{}:=-\infty\qquad.$$
Moreover, for $\cI\subset\cIq$ and $\cJ\subset\cJqpe$ we define
\begin{align*}
\bDqs(\cI,\om)&:=\big(\Lzqsom\boxplus\eta\calD^q(\cI_s)\big)\cap\bDqlocom\qquad,\\
\bRqpes(\cJ,\om)&:=\big(\Lzqpesom\boxplus\eta\calR^{q+1}(\cJ_s)\big)\cap\bRqpelocom
\end{align*}
and note $\bDqs(\cI,\om)=\bDqsom$ resp. $\bRqpes(\cJ,\om)=\bRqpesom$\,,
if $\cI_s=\emptyset$ resp. $\cJ_s=\emptyset$\,.
From now on we will work with tower-forms of arbitrary heights.
Thus in the following we may assume additionally
$$3\leq N\qqtext{odd}\qquad.$$

We may generalize Theorem \ref{gewstatikinjektivrotdivnull} as described above in the following two lemmas:

\begin{lem}\mylabel{verallstatikdiv}
Let $s\in(1-N/2,\infty)\ohne\pI$ and $\cI$ be a finite subset of $\cIq$ with maximal homogeneity degree $\homg{}{\cI}{}$\,,
such that $\eta\calD^q(\cI)\cap\Lzqsom=\{0\}$ holds true.
Furthermore, let $\tau>\max\{0,s-N/2,s+N/2+\homg{}{\cI}{}\}$ and $\tau\geq-s$\,.
Then for every $q$-form $F\in\bDqs(\cI,\om)$ with
$$F=F_s+\sum_{I\in\cI}{\tt f}_I\cdot\eta D^q_I\qqtext{,}F_s\in\Lzqsom\qqtext{,}{\tt f}_I\in\cz$$
there exists a unique $(q+1)$-form
$$H\in\Big(\big(\mu^\me\ronqpeom{s-1}\cap\dqpeom{s-1}\big)\boxplus\eta\calR^{q+1}(\pcJbqpe{0}{s-1}\cup\cIe)\Big)\cap\mu^\me\bRqpelocom$$
solving $\pdiv H=F$\,. This solution $H$ is represented by
$$H=H_{s-1}+\sum_{J\in\pcJbqpe{0}{s-1}}{\tt g}_J\cdot\eta R^{q+1}_J+\sum_{I\in\cI}{\tt f}_I\cdot\eta R^{q+1}_{{}_1I}$$
with $H_{s-1}\in\mu^\me\ronqpeom{s-1}\cap\dqpeom{s-1}$ and ${\tt g}_J\in\cz$\,. $H$ is an element of $\Lzqpetom$ for
$t<\min\{N/2,-1-N/2-\homg{}{\cI}{}\}$ and $t\leq s-1$\,.
Moreover, the solution operator is continuous and maps in particular
$\bDqs(\cI,\om)$ to $\mu^\me\bR{q+1}{s-1}{0}(\pcJbqpe{0}{s-1}\cup\cIe,\om)$ as well as to $\mu^\me\bRom{q+1}{t}{0}$ continuously.
\end{lem}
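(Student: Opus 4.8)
The plan is to mirror the proof of Lemma~\ref{gewstatikdivmu}, the sole new ingredient being that the tower part of the data is absorbed by an explicit homogeneous whole-space solution. Write $F=F_s+\sum_{I\in\cI}{\tt f}_I\,\eta D^q_I$ with $F_s\in\Lzqsom$; this decomposition is unique because the hypothesis $\eta\calD^q(\cI)\cap\Lzqsom=\{0\}$ forces $\cI_s=\cI$. Since the tower relations give $\pdiv R^{q+1}_{{}_1I}=D^q_I$ and, by \eqref{Turmzwei}, $\rot R^{q+1}_{{}_1I}=0$ in $\rN\ohne\{0\}$, the form $h_{\mathrm{tow}}:=\sum_{I\in\cI}{\tt f}_I R^{q+1}_{{}_1I}$ is the whole-space magneto-static solution carrying the tower data. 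I therefore make the ansatz $H=\eta h_{\mathrm{tow}}+\Phi$, where, exactly as in Lemma~\ref{gewstatikdivmu}, $\eta h_{\mathrm{tow}}$ is further augmented by the whole-space solution of the regular residual so that the latter's remainder has decay $<N/2-1$. Using the commutator formulas of Remark~\ref{TurmBemFuenf}, the requirements $\pdiv H=F$ and $\rot(\mu H)=0$ reduce to a coupled first-order system for $\Phi$,
$$\pdiv\Phi=F_s-\sum_{I\in\cI}{\tt f}_I\,C_{\pdiv,\eta}R^{q+1}_{{}_1I}=:\tilde F,\qquad\rot(\mu\Phi)=-\sum_{I\in\cI}{\tt f}_I\big(C_{\rot,\eta}R^{q+1}_{{}_1I}+\rot(\mud\,\eta R^{q+1}_{{}_1I})\big)=:\tilde G,$$
of exactly the type \eqref{rotmudivPhi} already treated in Lemma~\ref{gewstatikdivmu}.

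Next I would verify the data classes. The commutators are smooth and compactly supported, so $\tilde F\in\Lzqsom$; moreover $\pdiv\tilde F=\pdiv F=0$ and $\tilde F\perp\bonqom$ (integrating by parts against the curl-free, compactly supported $\bonqom$), hence $\tilde F\in\bDqsom$ by Corollary~\ref{gewstatikWertebzwei}. The decisive estimate concerns $\rot(\mud\,\eta R^{q+1}_{{}_1I})$: as $R^{q+1}_{{}_1I}$ is homogeneous of degree $\homg{}{{}_1I}{}=\homg{}{I}{}+1\le\homg{}{\cI}{}+1$ while $\mud=\calO(r^{-\tau})$ and $\p\mud=\calO(r^{-\tau-1})$, this term behaves like $r^{\homg{}{\cI}{}-\tau}$ at infinity, so by the homogeneity count of Remark~\ref{intbarkeitTuerme} it belongs to $\Lzqpe{s}$ precisely under the hypothesis $\tau>s+N/2+\homg{}{\cI}{}$; together with $\tau\ge-s$ and $\tau>s-N/2$ this places $\tilde G\in\bRom{q+2}{s}{0}$ and, as in \eqref{klassWerteb}, both $\tilde F$ and $\tilde G$ in unweighted $\lz$. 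The generalized classical static solution theory \paulydisstopiso then provides $\Phi\in\mu^\me\ronqpeom{-1}\cap\dqpeom{-1}$ solving the coupled system.

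The main labor, carried out exactly as at the end of the proof of Lemma~\ref{gewstatikdivmu}, is the integrability bootstrap: since $\Phi\in\Lzqpeom{-\Nh}$ with $\pdiv\Phi$ and $\rot\Phi$ controlled in weighted spaces far out (via the decay of $\mud$), iterating Lemma~\ref{rotdivintbarzerleg} along $\tau$-steps upgrades the a~priori weight $-1$ to $\Phi\in(\mu^\me\ronqpeom{s-1}\cap\dqpeom{s-1})\boxplus\eta\calR^{q+1}(\pcJbqpe{0}{s-1})$ (the exceptional $\check\calR$-forms being absent in the range $1\le q\le N-2$). Assembling $H=\Phi+\eta h_{\mathrm{tow}}$ yields the asserted representation with $H_{s-1}:=\Phi_{s-1}$; by construction $\pdiv H=F$, $\rot(\mu H)=0$ and $\mu H\perp\bqpeom$, so $H$ lies in the claimed domain intersected with $\mu^\me\bRqpelocom$. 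The stated integrability $H\in\Lzqpetom$ for $t<\min\{N/2,-1-N/2-\homg{}{\cI}{}\}$ and $t\le s-1$ follows term by term from Remark~\ref{intbarkeitTuerme}, the regular part giving $t\le s-1$, the ground forms $R^{q+1}_J$ the bound $t<N/2$, and the shifted forms $R^{q+1}_{{}_1I}$ of degree $\le\homg{}{\cI}{}+1$ the bound $t<-1-N/2-\homg{}{\cI}{}$. Uniqueness holds since the difference of two solutions is a $\mu^\me$-Dirichlet form lying in $\bqpeom^\bot$, hence zero by the defining property of $\bqpeom$, and continuity then follows from the bounded inverse theorem, the decomposition, reduction and solution maps each being continuous. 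I expect the hardest point to be this weighted bootstrap together with the homogeneity bookkeeping that pins down the sharp hypothesis $\tau>s+N/2+\homg{}{\cI}{}$.
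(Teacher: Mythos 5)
Your core reduction coincides with the paper's: the ansatz $H=h+\sum_{I\in\cI}{\tt f}_I\cdot\eta R^{q+1}_{{}_1I}$, the observation that the commutator terms are compactly supported and orthogonal to $\bonqom$ resp. $\bqpeom$\,, and the decisive estimate $\rot(\mu\,\eta R^{q+1}_{{}_1I})\in\qLzom{q+2}{s}$ under $\tau>s+N/2+\homg{}{\cI}{}$ are exactly the paper's steps. You diverge afterwards: the paper solves the reduced system in one stroke by applying the already established isomorphism $\sideset{^\mu}{^{q+1}_{s-1}}\statMax$ of Theorem \ref{gewstatikinjektivrotdivspeziell} to the data $(f,g,0)$\,, which yields existence, uniqueness, continuity and the representation at once, whereas you re-run the interior machinery of Lemma \ref{gewstatikdivmu} (whole-space Helmholtz decomposition, classical theory {\paulydisstopiso} at weight $-1$\,, bootstrap via Lemma \ref{rotdivintbarzerleg}). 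That route is viable --- it is what the cited theorem rests on --- but it repeats a page of work, and as written your displayed reduced system is inconsistent with it: for $s<0$ the term $F_s\in\Lzqsom$ need not lie in unweighted $\lz$\,, so the whole-space correction of the regular residual that you mention only in passing must actually enter $\tilde F$ and $\tilde G$ before {\paulydisstopiso} can be invoked; your displayed formulas omit it.

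The genuine gap concerns the exceptional form. You assert that the exceptional $\check{\calR}$-forms are ``absent in the range $1\leq q\leq N-2$''\,, but this is false at the borderline rank: for $q=N-2$ and $s\geq N/2$ one has $\check{R}^{q+1,1}_{s-1}=\turmd{N-1}{1}{0}{1}{-}\neq0$\,, and neither Lemma \ref{rotdivintbarzerleg} nor Theorem \ref{gewstatikinjektivrotdivspeziell} excludes it: both only place the solution in $\big(\mu^\me\ronqpeom{s-1}\cap\dqpeom{s-1}\big)\boxplus\eta\calR^{q+1}(\pcJbqpe{0}{s-1})\boxplus\eta\check{\calR}^{q+1,1}_{s-1}$\,. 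Since the lemma asserts membership in the space \emph{without} the exceptional summand, a separate argument is required; the paper supplies it via the second-order approach of the appendix (Remark \ref{gewstatikinjektivrotdivnullbem}, Lemma \ref{keineAusnahmeformenF}): for $s\geq N/2$ the solution is realized as $H=\mu^\me\rot\tilde{E}$ with $\pdiv\mu^\me\rot\tilde{E}=F$\,, and no form of type $\turmd{N-1}{1}{0}{1}{-}$ arises when $\rot$ is applied to that second-order solution space. Without this step your argument proves the lemma only for $1\leq q\leq N-3$\,, or for $q=N-2$ with $s<N/2$\,.
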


\begin{rem}\mylabel{verallstatikdivbem}
Using the notations from the lemma above we obtain by the properties of the order of decay $\tau$
$$\hat{H}:=\mu H\in\Big(\big(\ronqpeom{s-1}\cap\mu\dqpeom{s-1}\big)\boxplus\eta\calR^{q+1}(\pcJbqpe{0}{s-1}\cup\cIe)\Big)\cap\bRqpelocom$$
solving $\pdiv\mu^\me\hat{H}=F$\,. $\hat{H}$ is of the form
$$\hat{H}=\hat{H}_{s-1}+\sum_{J\in\pcJbqpe{0}{s-1}}{\tt g}_J\cdot\eta R^{q+1}_J+\sum_{I\in\cI}{\tt f}_I\cdot\eta R^{q+1}_{{}_1I}$$
with $\bds\hat{H}_{s-1}=\mu H_{s-1}+\sum_{J\in\pcJbqpe{0}{s-1}}{\tt g}_J\cdot\mud\,\eta R^{q+1}_J+\sum_{I\in\cI}{\tt f}_I\cdot\mud\,\eta R^{q+1}_{{}_1I}\in\ronqpeom{s-1}\cap\mu\dqpeom{s-1}\eds$\,.
\end{rem}

\begin{proof}
Let us assume w. l. o. g. $\cI\neq\emptyset$ and
$$F=F_s+\sum_{I\in\cI}{\tt f}_I\cdot\eta D^q_I\in\bDqs(\cI,\om)
=\big(\dqsom\boxplus\eta\calD^q(\cI)\big)\cap\bDqlocom\qquad.$$
By the choice of our cut-off function $\eta$ all terms, which possess a factor $\eta$\,, are perpendicular to
$\bonqom$ resp. $\bqpeom$\,. Especially $\eta D^q_I$ and $F_s$ belong to $\bonqom^\bot$\,.
Noticing $\pdiv R^{q+1}_{{}_1I}=D^q_I$ by \eqref{Turmdrei} we choose the ansatz
$$H:=h+\sum_{I\in\cI}{\tt f}_I\cdot\eta R^{q+1}_{{}_1I}\qquad.$$
Thus our system $H\in\mu^\me\bRqpelocom$ and $\pdiv H=F$ is transformed into
\begin{align*}
\pdiv h&=F-\sum_{I\in\cI}{\tt f}_I\cdot\pdiv(\eta R^{q+1}_{{}_1I})\\
&=F_s-\sum_{I\in\cI}{\tt f}_I\cdot C_{\pdiv,\eta}R^{q+1}_{{}_1I}=:f\in\bDqsom\qquad,\\
\rot\mu h&=-\sum_{I\in\cI}{\tt f}_I\cdot\rot(\mu\,\eta R^{q+1}_{{}_1I})=:g\in\bRom{q+2}{\loc}{0}\qquad,\\
\mu h&\in\bqpeom^\bot\qquad.
\end{align*}
Because of $\tau>s+N/2+\homg{}{\cI}{}\geq s+N/2+\homg{}{I}{}$ Remark \ref{intbarkeitTuerme} yields
$$\rot(\mu\,\eta R^{q+1}_{{}_1I})=C_{\rot,\eta}R^{q+1}_{{}_1I}+\rot(\mud\,\eta R^{q+1}_{{}_1I})\in\qLzom{q+2}{<-\Nh-\homg{}{I}{}+\tau}\subset\qLzom{q+2}{s}$$
for all $I\in\cI$\,. Now we can apply Theorem \ref{gewstatikinjektivrotdivspeziell} and get the unique solution of the system above
$$h:=(\sideset{^\mu}{^{q+1}_{s-1}}\statMax)^\me(f,g,0)\in\big(\mu^\me\ronqpeom{s-1}\cap\dqpeom{s-1}\big)\boxplus\eta\calR^{q+1}(\pcJbqpe{0}{s-1})\boxplus\eta\check{\calR}^{q+1,1}_{s-1}\,\,.$$
Thus $H$ is an element of
$$\Big(\big(\mu^\me\ronqpeom{s-1}\cap\dqpeom{s-1}\big)\boxplus\eta\calR^{q+1}(\pcJbqpe{0}{s-1}\cup\cIe)\boxplus\eta\check{\calR}^{q+1,1}_{s-1}\Big)\cap\mu^\me\bRqpelocom$$
and clearly the desired unique solution because of its special form.
Utilizing the appendix and $H\in\mu^\me\bRqpelocom$ we see that even in the case $q=N-2$ the exceptional form
$\check{R}^{q+1,1}_{s-1}$ does not appear.
\end{proof}

With similar arguments we prove

\begin{lem}\mylabel{verallstatikrot}
Let $s\in(1-N/2,\infty)\ohne\pI$ and $\cJ$ be a finite subset of $\cJqpe$ with maximal homogeneity degree $\homg{}{\cJ}{}$\,,
such that $\eta\calR^{q+1}(\cJ)\cap\Lzqpesom=\{0\}$ holds true.
Furthermore, let $\tau>\max\{0,s-N/2,s+N/2+\homg{}{\cJ}{}\}$ and $\tau\geq-s$\,.
Then for every $(q+1)$-form $G\in\bRqpes(\cJ,\om)$ with
$$G=G_s+\sum_{J\in\cJ}{\tt g}_J\cdot\eta R^{q+1}_J\qqtext{,}G_s\in\Lzqpesom\qqtext{,}{\tt g}_J\in\cz$$
there exists a unique $q$-form
$$E\in\Big(\big(\ronqom{s-1}\cap\eps^\me\dqom{s-1}\big)\boxplus\eta\calD^q(\pcIbq{0}{s-1}\cup\cJe)\Big)\cap\eps^\me\bDqlocom$$
solving $\rot E=G$\,. This solution $E$ is represented by
$$E=E_{s-1}+\sum_{I\in\pcIbq{0}{s-1}}{\tt f}_I\cdot\eta D^q_I+\sum_{J\in\cJ}{\tt g}_J\cdot\eta D^q_{{}_1J}$$
with $E_{s-1}\in\ronqom{s-1}\cap\eps^\me\dqom{s-1}$ and ${\tt f}_I\in\cz$\,. $E$ is an element of $\Lzqtom$ for all
$t<\min\{N/2,-1-N/2-\homg{}{\cJ}{}\}$ and $t\leq s-1$\,.
Moreover, the solution operator is continuous and maps in particular
$\bRqpes(\cJ,\om)$ to $\eps^\me\bD{q}{s-1}{0}(\pcIbq{0}{s-1}\cup\cJe,\om)$ as well as to $\eps^\me\bDom{q}{t}{0}$ continuously.
\end{lem}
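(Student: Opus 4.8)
The plan is to dualize the proof of Lemma \ref{verallstatikdiv}, interchanging the roles of $\rot$ and $\pdiv$ and of the $R$- and $D$-towers. Assuming without loss of generality $\cJ\neq\emptyset$ and writing $G=G_s+\sum_{J\in\cJ}{\tt g}_J\cdot\eta R^{q+1}_J$, the tower relation $\rot D^q_{{}_1J}=R^{q+1}_J$ suggests the ansatz
$$E:=e+\sum_{J\in\cJ}{\tt g}_J\cdot\eta D^q_{{}_1J}\qquad.$$
Inserting this, the requirements $\rot E=G$ and $E\in\eps^\me\bDqlocom$ turn into a system for $e$, namely $\rot e=g$ and $\pdiv\eps e=f$ with
$$g:=G_s-\sum_{J\in\cJ}{\tt g}_J\cdot C_{\rot,\eta}D^q_{{}_1J}\qquad,\qquad f:=-\sum_{J\in\cJ}{\tt g}_J\cdot\pdiv(\eps\,\eta D^q_{{}_1J})\qquad,$$
while $\eps e\in\bonqom^\bot$ comes for free, since $\supp\eta$ avoids the supports of $\bonqom$ and $\bqpeom$. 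The crucial simplification here is that $\eta\rot D^q_{{}_1J}$ cancels the tower part $\eta R^{q+1}_J$ of $G$ exactly, so that $g$ retains only the compactly supported commutator corrections; then $\rot g=0$ (because $\rot G=0$) and $g\perp\bqpeom$, whence $g\in\bRqpesom$ without any use of the decay of $\epsd$.

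The main work is to show $(f,g,0)\in\bWqsom$, i.e.\ $f\in\bDom{q-1}{s}{0}$. Since $\pdiv D^q_{{}_1J}=0$ by \eqref{Turmzwei}, I would split
$$\pdiv(\eps\,\eta D^q_{{}_1J})=C_{\pdiv,\eta}D^q_{{}_1J}+\pdiv(\epsd\,\eta D^q_{{}_1J})\qquad.$$
The commutator is compactly supported, and by Remark \ref{intbarkeitTuerme} the shifted form $D^q_{{}_1J}$ is homogeneous of degree $\homg{}{J}{}+1$, so that $\pdiv(\epsd\,\eta D^q_{{}_1J})\in\qLzom{q-1}{<-\Nh-\homg{}{J}{}+\tau}$ by the decay $\epsd=\calO(r^{-\tau})$. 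The hypothesis $\tau>s+N/2+\homg{}{\cJ}{}\geq s+N/2+\homg{}{J}{}$ is exactly what is needed to place this term into $\qLzom{q-1}{s}$. Together with $\pdiv f=0$ (as $\pdiv^2=0$) and $f\perp\bonqom$ this yields $f\in\bDom{q-1}{s}{0}$, in complete analogy with the treatment of the $\rot(\mu\,\eta R^{q+1}_{{}_1I})$ term in Lemma \ref{verallstatikdiv}.

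With $(f,g,0)\in\bWqsom$ at hand I would apply the topological isomorphism of Theorem \ref{gewstatikinjektivrotdivspeziell} to obtain
$$e:=(\sideset{_\eps}{^q_{s-1}}\statMax)^\me(f,g,0)\in\big(\ronqom{s-1}\cap\eps^\me\dqom{s-1}\big)\boxplus\eta\calD^q(\pcIbq{0}{s-1})\boxplus\eta\check{\calD}^{q,1}_{s-1}\qquad,$$
and reassemble $E=e+\sum_{J\in\cJ}{\tt g}_J\cdot\eta D^q_{{}_1J}$. The asserted membership and the explicit representation follow, and the integrability $E\in\Lzqtom$ for $t<\min\{N/2,-1-N/2-\homg{}{\cJ}{}\}$ with $t\leq s-1$ is read off from Remark \ref{intbarkeitTuerme} applied to the ground-forms $D^q_I$ (contributing $t<N/2$), the regular part $E_{s-1}\in\ronqom{s-1}\cap\eps^\me\dqom{s-1}$ (contributing $t\leq s-1$), and the shifted forms $D^q_{{}_1J}$ of homogeneity $\homg{}{J}{}+1$ (contributing $t<-1-N/2-\homg{}{\cJ}{}$). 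The continuity of the solution operator, and the corresponding statement for $\eps E=\eps e+\sum_J{\tt g}_J\cdot\eps\,\eta D^q_{{}_1J}$ (analogous to Remark \ref{verallstatikdivbem}), are then routine consequences.

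The point I expect to require the most care is the disappearance of the exceptional form $\eta\check{D}^{q,1}_{s-1}$, which a priori enters through the inversion of $\sideset{_\eps}{^q_{s-1}}\statMax$ and which must drop out for the sum to range only over $\pcIbq{0}{s-1}\cup\cJe$. As in Remark \ref{gewstatikinjektivrotdivnullbem}, the form $\check{D}^{q,1}_{s-1}$ can be nonzero only for $q=1$ (the case $q=N-1$ being excluded by the standing restriction $1\leq q\leq N-2$), and precisely there the constraint $E\in\eps^\me\bDqlocom$ forces it to vanish; the verification of this is the content of the appendix referred to in Remark \ref{gewstatikinjektivrotdivnullbem}. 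Uniqueness is then pinned down by the special form of $E$ together with the injectivity of $\sideset{_\eps}{^q_{s-1}}\statMax$, and the degenerate case $\cJ=\emptyset$ reduces directly to Theorem \ref{gewstatikinjektivrotdivspeziell}.
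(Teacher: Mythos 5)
Your proposal is correct and coincides with the paper's intended argument: the paper proves Lemma \ref{verallstatikrot} only by the phrase ``with similar arguments'', i.e.\ by dualizing the proof of Lemma \ref{verallstatikdiv}, which is exactly what you do \big(ansatz $E=e+\sum_{J\in\cJ}{\tt g}_J\cdot\eta D^q_{{}_1J}$\,, decay estimate for $\pdiv(\epsd\,\eta D^q_{{}_1J})$ from $\tau>s+N/2+\homg{}{\cJ}{}$\,, inversion of $\sideset{_\eps}{^q_{s-1}}\statMax$ at $(f,g,0)$\,, and the appendix to exclude $\eta\check{\calD}^{q,1}_{s-1}$\big). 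The only cosmetic slip is that $f$\,, being a $(q-1)$-form, should be orthogonal to the $(q-1)$-form family of special forms rather than to $\bonqom$\,.
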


\begin{rem}\mylabel{verallstatikrotbem}
Using the notations from the lemma above we obtain by the properties of the order of decay $\tau$
$$\hat{E}:=\eps E\in\Big(\big(\eps\ronqom{s-1}\cap\dqom{s-1}\big)\boxplus\eta\calD^q(\pcIbq{0}{s-1}\cup\cJe)\Big)\cap\bDqlocom$$
solving $\rot\eps^\me\hat{E}=G$\,. $\hat{E}$ has the form
$$\hat{E}=\hat{E}_{s-1}+\sum_{I\in\pcIbq{0}{s-1}}{\tt f}_I\cdot\eta D^q_I+\sum_{J\in\cJ}{\tt g}_J\cdot\eta D^q_{{}_1J}$$
with $\bds\hat{E}_{s-1}=\eps E_{s-1}+\sum_{I\in\pcIbq{0}{s-1}}{\tt f}_I\cdot\epsd\,\eta D^q_I+\sum_{J\in\cJ}{\tt g}_J\cdot\epsd\,\eta D^q_{{}_1J}\in\eps\ronqom{s-1}\cap\dqom{s-1}\eds$\,.
\end{rem}

The latter two lemmas and remarks yield a solution theory for a generalized static Maxwell problem:

\begin{defini}\mylabel{verallstatMaxdef}
Let $s\in(1-N/2,\infty)\ohne\pI$ and $\cI\times\cJ$ be a finite subset of $\cIq\times\cJqpe$\,, such that
$\eta\calD^q(\cI)\cap\Lzqsom=\{0\}$ and $\eta\calR^{q+1}(\cJ)\cap\Lzqpesom=\{0\}$ holds.
Furthermore, let $\tau>\max\{0,s-N/2\}$\,, $\tau\geq-s$ and
$\tau>s+N/2+\max\{\homg{}{\cI}{},\homg{}{\cJ}{}\}$\,.

We call $\EH$ a solution of the `{\sf generalized static Maxwell problem}' for data
$$\FG\in\bDqs(\cI,\om)\times\bRqpes(\cJ,\om)\qquad,$$
if and only if
\begin{align*}
\text{\rm\bf (i)}&&E&\in\Big(\big(\ronqom{s-1}\cap\eps^\me\dqom{s-1}\big)\boxplus\eta\calD^q(\pcIbq{0}{s-1}\cup\cJe)\Big)\cap\eps^\me\bDqlocom\quad,\\
&&H&\in\Big(\big(\mu^\me\ronqpeom{s-1}\cap\dqpeom{s-1}\big)\boxplus\eta\calR^{q+1}(\pcJbqpe{0}{s-1}\cup\cIe)\Big)\cap\mu^\me\bRqpelocom\quad,\\
\text{\rm\bf (ii)}&&&\qquad\qquad\qquad\rot E=G\qqtext{,}\pdiv H=F
\end{align*}
hold.
\end{defini}

We set $\Lambda:=\zmat{\eps}{0}{0}{\mu}$ and obtain

\begin{theo}\mylabel{verallstatMaxsatz}
The generalized static Maxwell problem is always uniquely solvable.
The mapping $\FG\mapsto\EH$ defines two continuous linear operators
\begin{align*}
\loesn\quad:\quad&\quad\bDqs(\cI,\om)\times\bRqpes(\cJ,\om)\\
\To&\quad\Lambda^\me\big(\bD{q}{s-1}{0}(\pcIbq{0}{s-1}\cup\cJe,\om)\times\bR{q+1}{s-1}{0}(\pcJbqpe{0}{s-1}\cup\cIe,\om)\big)
\end{align*}
and $\loes:=\Lambda\loesn$ with $\loesn\FG:=\EH$\,.
\end{theo}

\begin{rem}\mylabel{verallstatMaxbem}
The `tower-parts' of the `generalized static Maxwell operators' can be described more precisely: If
$$F=F_s+\sum_{I\in\cI}{\tt f}_I\cdot\eta D^q_I\qqtext{and}G=G_s+\sum_{J\in\cJ}{\tt g}_J\cdot\eta R^{q+1}_J\qquad,$$
then (for example) the solution $\EH=\loes\FG$ is of the form
$$E=E_{s-1}+\eta\tilde{E}+\sum_{J\in\cJ}{\tt g}_J\cdot\eta D^q_{{}_1J}\qtext{and}H=H_{s-1}+\eta\tilde{H}+\sum_{I\in\cI}{\tt f}_I\cdot\eta R^{q+1}_{{}_1I}\quad,$$
where $(E_{s-1},H_{s-1})\in\Lzqom{s-1}\times\Lzqpeom{s-1}$ and $(\tilde{E},\tilde{H})\in\calD^q(\pcIbq{0}{s-1})\times\calR^{q+1}(\pcJbqpe{0}{s-1})$\,.
\end{rem}

The generalized static Maxwell operator $\loes$ in Theorem \ref{verallstatMaxsatz} may now be iterated easily.
Since the static Maxwell operator \eqref{Mdef} has only entries on its secondary diagonal,
we have to distinguish between even and odd powers of $\loes$\,. We get

\begin{theo}\mylabel{loesit}
Let $j\in\nz$\,, $s\in(j-N/2,\infty)\ohne\pI$ and $\cI\times\cJ$ be a finite subset of $\cIq\times\cJqpe$\,, such that
$\eta\calD^q(\cI)\cap\Lzqsom=\{0\}$ and $\eta\calR^{q+1}(\cJ)\cap\Lzqpesom=\{0\}$ hold. Moreover, let
$\tau>\max\{0,s-N/2\}$\,, $\tau\geq j-1-s$ and $\tau>s+N/2+\max\{\homg{}{\cI}{},\homg{}{\cJ}{}\}$\,.
Then
\begin{align*}
\loes^j\quad:\quad&\bDqs(\cI,\om)\times\bRqpes(\cJ,\om)\\
\To&
\begin{cases}
\bD{q}{s-j}{0}(\pcIbq{\leq j-1}{s-j}\cup\cIj,\om)\times\bR{q+1}{s-j}{0}(\pcJbqpe{\leq j-1}{s-j}\cup\cJj,\om)&\,,j\text{ even}\\
&\\
\bD{q}{s-j}{0}(\pcIbq{\leq j-1}{s-j}\cup\cJj,\om)\times\bR{q+1}{s-j}{0}(\pcJbqpe{\leq j-1}{s-j}\cup\cIj,\om)&\,,j\text{ odd}
\end{cases}
\end{align*}
is a continuous linear operator, whose range is contained in
$$\bDom{q}{t}{0}\times\bRom{q+1}{t}{0}$$
for all $t$ satisfying $t\leq s-j$\,, $t<N/2-j+1$ and $t<-j-N/2-\max\{\homg{}{\cI}{},\homg{}{\cJ}{}\}$\,.
\end{theo}

\begin{rem}\mylabel{loesitbem}
Also for higher powers $\loes^j$ of $\loes$ it is clear by Remark \ref{verallstatMaxbem}, in which way $\loes^j$ maps
tower-forms to tower-forms. Furthermore, this remark shows that the new appearing tower-forms from
$\eta\calD^q(\pcIbq{\leq j-1}{s-j})$ and $\eta\calR^{q+1}(\pcJbqpe{\leq j-1}{s-j})$ satisfy the following recursion:
Let $\FG$ be as in Remark \ref{verallstatMaxbem}. If $\EH:=\loes^j\FG$ has the form
\begin{align*}
\EH&=(E_{s-j},H_{s-j})+\Big(\sum_{I\in\pcIbq{\leq j-1}{s-j}}{\tt e}_I\cdot\eta D^q_I,\sum_{J\in\pcJbqpe{\leq j-1}{s-j}}{\tt h}_J\cdot\eta R^{q+1}_J\Big)\\
&\qquad\qquad+\begin{cases}
\bds\Big(\sum_{I\in\cI}{\tt f}_I\cdot\eta D^q_{{}_jI},\sum_{J\in\cJ}{\tt g}_J\cdot\eta R^{q+1}_{{}_jJ}\Big)\eds&\,,j\text{ even}\\
&\\
\bds\Big(\sum_{J\in\cJ}{\tt g}_J\cdot\eta D^q_{{}_jJ},\sum_{I\in\cI}{\tt f}_I\cdot\eta R^{q+1}_{{}_jI}\Big)\eds&\,,j\text{ odd}
\end{cases}\qquad,
\end{align*}
where $(E_{s-j},H_{s-j})\in\Lzqom{s-j}\times\Lzqpeom{s-j}$\,, then
\begin{align*}
(\tilde{E},\tilde{H})&:=\loes\EH=\loes^{j+1}\FG\\
&\,\,=(\tilde{E}_{s-j-1},\tilde{H}_{s-j-1})+\Big(\sum_{I\in\pcIbq{\leq j}{s-j-1}}\tilde{\tt e}_I\cdot\eta D^q_I,\sum_{J\in\pcJbqpe{\leq j}{s-j-1}}\tilde{\tt h}_J\cdot\eta R^{q+1}_J\Big)\\
&\qquad\qquad+\begin{cases}
\bds\Big(\sum_{J\in\cJ}{\tt g}_J\cdot\eta D^q_{{}_{j+1}J},\sum_{I\in\cI}{\tt f}_I\cdot\eta R^{q+1}_{{}_{j+1}I}\Big)\eds&\,,j\text{ even}\\
&\\
\bds\Big(\sum_{I\in\cI}{\tt f}_I\cdot\eta D^q_{{}_{j+1}I},\sum_{J\in\cJ}{\tt g}_J\cdot\eta R^{q+1}_{{}_{j+1}J}\Big)\eds&\,,j\text{ odd}
\end{cases}\qquad,
\end{align*}
where $(\tilde{E}_{s-j-1},\tilde{H}_{s-j-1})\in\Lzqom{s-j-1}\times\Lzqpeom{s-j-1}$\,.
Thereby, for indices $I\in\pcIbq{\leq j-1}{s-j}$ and $J\in\pcJbqpe{\leq j-1}{s-j}$ the coefficients ${\tt e}_I$\,, ${\tt h}_J$ and $\tilde{\tt e}_{{}_1J}$\,, $\tilde{\tt h}_{{}_1I}$ satisfy the recursion
$${\tt e}_I=\tilde{\tt h}_{{}_1I}\qqtext{,}{\tt h}_J=\tilde{\tt e}_{{}_1J}\qquad.$$
\end{rem}

Finally we formulate the latter theorem in the special case $\cI=\emptyset$\,, $\cJ=\emptyset$\,:

\begin{cor}\mylabel{loesitkor}
Let $j\in\nz$\,, $s\in(j-N/2,\infty)\ohne\pI$ and $t\leq s-j$\,, $t<N/2-j+1$ as well as
$\tau>\max\{0,s-N/2\}$\,, $\tau\geq j-1-s$\,. Then
$$\loes^j:\bDqsom\times\bRqpesom\To\bD{q}{s-j}{0}(\pcIbq{\leq j-1}{s-j},\om)\times\bR{q+1}{s-j}{0}(\pcJbqpe{\leq j-1}{s-j},\om)$$
is a continuous linear operator with range contained in $\bDom{q}{t}{0}\times\bRom{q+1}{t}{0}$\,.
\end{cor}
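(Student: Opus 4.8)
The plan is to obtain this corollary as the special case $\cI=\emptyset$, $\cJ=\emptyset$ of Theorem \ref{loesit}, so that essentially no new work is required beyond checking that each hypothesis and each conclusion of that theorem collapses correctly under this choice. First I would record that the admissibility requirements $\eta\calD^q(\cI)\cap\Lzqsom=\{0\}$ and $\eta\calR^{q+1}(\cJ)\cap\Lzqpesom=\{0\}$ hold trivially, since $\calD^q(\emptyset)=\calR^{q+1}(\emptyset)=\{0\}$ by our convention. Likewise $\emptyset_{s}=\emptyset$, whence $\bDqs(\emptyset,\om)=\bDqsom$ and $\bRqpes(\emptyset,\om)=\bRqpesom$, so the domain appearing in Theorem \ref{loesit} reduces exactly to the domain $\bDqsom\times\bRqpesom$ claimed here.

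Next I would exploit the convention $\homg{}{\emptyset}{}=-\infty$. With $\cI=\cJ=\emptyset$ the quantity $\max\{\homg{}{\cI}{},\homg{}{\cJ}{}\}$ equals $-\infty$, so the decay hypothesis $\tau>s+N/2+\max\{\homg{}{\cI}{},\homg{}{\cJ}{}\}$ of Theorem \ref{loesit} is automatically fulfilled and drops out; only $\tau>\max\{0,s-N/2\}$ and $\tau\geq j-1-s$ survive, which are precisely the assumptions imposed in the corollary. For the same reason the extra constraint $t<-j-N/2-\max\{\homg{}{\cI}{},\homg{}{\cJ}{}\}$ controlling the refined range becomes $t<\infty$ and is vacuous, leaving exactly the conditions $t\leq s-j$ and $t<N/2-j+1$ stated here.

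Finally I would simplify the target spaces. Since $\cI=\emptyset$ gives $\cIj=\emptyset$ and $\cJ=\emptyset$ gives $\cJj=\emptyset$, the shifted index sets disappear from the range descriptions, so the even- and odd-$j$ cases of Theorem \ref{loesit} coincide and yield the single range $\bD{q}{s-j}{0}(\pcIbq{\leq j-1}{s-j},\om)\times\bR{q+1}{s-j}{0}(\pcJbqpe{\leq j-1}{s-j},\om)$, together with the inclusion of the range in $\bDom{q}{t}{0}\times\bRom{q+1}{t}{0}$ under the reduced conditions on $t$. Continuity and linearity are inherited directly from Theorem \ref{loesit}. I expect no genuine obstacle here: the entire content is the bookkeeping above, and the single point meriting care is confirming that the $-\infty$ convention renders both the auxiliary $\tau$-condition and the auxiliary $t$-condition vacuous rather than contradictory, so that the hypothesis list of the corollary is indeed sufficient to invoke the theorem.
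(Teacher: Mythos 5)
Your proposal is correct and coincides with the paper's own treatment: Corollary \ref{loesitkor} is stated there precisely as the specialization of Theorem \ref{loesit} to $\cI=\emptyset$, $\cJ=\emptyset$, with the conventions $\calD^q(\emptyset)=\{0\}$ and $\homg{}{\emptyset}{}=-\infty$ making the extra $\tau$- and $t$-conditions vacuous and collapsing the even/odd ranges, exactly as you argue.
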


\section{Electro-magneto statics with inhomogeneous boundary data}

We want to conclude this paper by discussing inhomogeneous boundary data.\mylabel{inhomo}
Recently Weck showed in \cite{wecklip}, how one may obtain traces of differential forms on Lipschitz-boundaries.
To utilize his results let us assume that $\om$ has a Lipschitz boundary.
As in \paulytimeharmsecboundary\, we then have
for every $s\in\rz$ linear and continuous tangential trace and extension operators $\gt$ and $\chgt$ satisfying
$$\rqsom\xrightarrow{\gt}\xxrq(\dom)\xrightarrow{\chgt}\rqsom\cap\eps^\me\dqsom$$
and $\gt\chgt=\id$ on $\xxrq(\dom)$\,. We note that the kernel of $\gt$ equals $\ronqsom$ and
that $\gt$ may be defined even on $\rqlocomq$\,. $\chgt$ may be chosen, such that
$\supp\chgt\lambda\subset\ol{\om\cap U_{r_2}}$ holds for all $\lambda\in\xxrq(\dom)$\,,
in particular $\chgt$ maps to $\rqvoxom\cap\eps^\me\dqvoxom$\,.

Now our aim is to generalize the static solution theory, such that we can deal with inhomogeneous boundary data.

With the functionals $\Phi^\ell_\eps$ used in Theorem \ref{gewstatikinjektivrotdiv} we consider the following problem:
For some given data $G,f,\lambda,\alpha$ find a $q$-form $E\in\rqom{>-\Nh}\cap\eps^\me\dqom{>-\Nh}$ satisfying
\begin{align}
\begin{split}
\rot E&=G\qquad,\\
\pdiv\eps E&=f\qquad,\\
\gt E&=\lambda\qquad,\\
\Phi^\ell_\eps(E)&=\alpha_\ell\qqtext{,}\ell=1,\dots,d^q\qquad.
\end{split}\mylabel{problem}
\end{align}

\begin{theo}\mylabel{loesstatikgewichtrand}
Let $s\in(1-N/2,\infty)\ohne\pI$ and $\tau>\max\{0,s-N/2\}$\,, $\tau\geq-s$\,.
Then for all $\alpha\in\cz^{d^q}$\,,
$f\in\bDom{q-1}{s}{0}$ and all $G\in\rqpesnom$\,, $\lambda\in\xxrq(\dom)$ satisfying
$$\Rot\lambda=\gt G\qquad\wedge\qquad\bigwedge_{h\in\dhqpeom}\skp{G}{h}_{\Lzqpeom{}}=\skp{\rot\chgt\lambda}{h}_{\Lzqpeom{}}$$
there exists a unique solution
$$E\in\big(\rqom{s-1}\cap\eps^\me\dqom{s-1}\big)\oplus\eta\calD^q(\pcIbq{0}{s-1})\oplus\eta\check{\calD}^{q,1}_{s-1}$$
of \eqref{problem}. The solution depends continuously on the data.
\end{theo}

\begin{proof}
With $\check{E}:=\chgt\lambda\in\rqvoxom\cap\eps^\me\dqvoxom$ the ansatz $E:=\check{E}+\tilde{E}$
leads us to the following problem:
Find some $\tilde{E}\in\big(\ronqom{s-1}\cap\eps^\me\dqom{s-1}\big)\oplus\eta\calD^q(\pcIbq{0}{s-1})\oplus\eta\check{\calD}^{q,1}_{s-1}$
solving the system
\begin{align*}
\rot\tilde{E}&=G-\rot\check{E}=:\tilde{G}\in\rqpenom{s}\qquad,\\
\pdiv\eps\tilde{E}&=f-\pdiv\eps\check{E}=:\tilde{f}\in\bDom{q-1}{s}{0}\qquad,\\
\Phi^\ell_\eps(\tilde{E})&=\alpha_\ell-\Phi^\ell_\eps(\check{E})=:\tilde{\alpha}_\ell\qqtext{,}\ell=1,\dots,d^q\qquad.
\end{align*}
By Theorem \ref{gewstatikinjektivrotdiv} this problem is uniquely solved by
$\tilde{E}:=(\sideset{_\eps}{^q_{s-1}}\MAX)^\me(\tilde{f},\tilde{G},\tilde{\alpha}_\ell)$\,,
if $(\tilde{f},\tilde{G},\tilde{\alpha}_\ell)\in \bWqsom$ holds. So it remains to show
$$\tilde{G}\in\bRqpesom=\ronqpenom{s}\cap\dhqpeom^\bot\qquad.$$
Since $\gt\rot=\Rot\gt$ holds,
where $\Rot:=\pd$ denotes the exterior derivative on the submanifold $\dom$ of $\omq$\,,
$\tilde{G}$ satisfies the homogeneous boundary condition and
clearly $\tilde{G}$ is orthogonal to all Dirichlet forms.
\end{proof}

\begin{rem}\mylabel{loesstatikgewichtrandremone}
The orthogonality constraints on the Dirichlet forms may be replaced by constraints on the special forms $\bonqom$ resp. $\bqom$
as in section \ref{statics}. For this it is necessary that the forms $\bonqom$ are irrotational and $\bqom$ solenoidal.
Similarly to section \ref{statics} we are also able to specialize the functionals $\Phi^\ell_\eps$ using $\bonqom$ and $\bqom$\,.
\end{rem}

\begin{rem}\mylabel{loesstatikgewichtrandremtwo}
Clearly we get as well a generalized static solution theory in the case of inhomogeneous boundary data, which acts on arbitrary tower-forms
as in section \ref{iteration}. Then even for inhomogeneous boundary data the iteration process
from section \ref{iteration} holds true in a canonical way. We note that the inhomogeneous boundary condition is only realized by the trace of the
form from the ground floor. All forms from higher floors have vanishing boundary traces.
\end{rem}

\begin{rem}\mylabel{loesstatikgewichtrandremthree}
Assuming more regularity of $\om$\,, i.e. $\om\in\pc{2}$\,, we have by Stokes theorem
$$\skp{\rot\chgt\lambda}{h}_{\Lzqpeom{}}=\skp{\lambda}{\xgn h}_{\qH{\meh}{q}{}{}(\dom)}\qquad,$$
because then $h$ is an element of $\qHom{1}{q+1}{}{}$ and thus $\xgn h$ belongs to $\qH{1/2}{q}{}{}(\dom)$\,,
where $\xgn=\pm\cast\iota^**$ denotes the usual normal trace.
Here $\cast$ denotes the star-operator on the manifold $\dom$\,,
$\iota^*$ the pullback of the natural embedding $\iota:\dom\to\omq$
and $\bds\skp{\,\cdot\,}{\,\cdot\,}_{\qH{\meh}{q}{}{}(\dom)}\eds$
the duality between $\qH{\meh}{q}{}{}(\dom)$ and $\qH{\frac{1}{2}}{q}{}{}(\dom)$\,.
\end{rem}

\appendix

\section{Appendix: Second order operators}

We still have to exclude the appearance of the special tower-forms $\eta\check{D}^{q,1}_{s-1}$\,, i.e. $\eta\check{D}^{1,1}_{s-1}$\,,
and $\eta\check{R}^{q+1,1}_{s-1}$\,, i.e. $\eta\check{R}^{N-1,1}_{s-1}$\,, in Theorem \ref{gewstatikinjektivrotdivnull}, Lemma \ref{verallstatikdiv} and Lemma \ref{verallstatikrot}.
To prove this we introduce a second order approach to our static systems and use once more the relationship between
Maxwell equations and the Poisson equation via the well known formula 
$$\Delta=\rot\pdiv+\pdiv\rot\qquad.$$
Let us introduce the Hilbert spaces
\begin{align*}
\X^q_s(\Omega)&:=\setb{E\in\ronqsom\cap\dqsom}{\mu^\me\rot E\in\dqpeom{s+1}}\qquad,\\
\Y^{q+1}_s(\Omega)&:=\setb{H\in\ronqpesom\cap\dqpesom}{\eps^\me\pdiv H\in\ronqom{s+1}}
\end{align*}
and mention the following fact: If $\eps$ resp. $\mu$ is a $\tau$-$\pc{1}$-admissible transformation on
$q$- resp. $(q+1)$-forms, then so is the inverse transformation $\eps^\me$ resp. $\mu^\me$\,.

The following lemmas can be proved using the same ideas and techniques, which we have presented in sections \ref{statopweight} and \ref{iteration}
for our first order Maxwell systems. We neglect the (very similar) proofs and refer the interested reader to {\paulydissabschnittsechsfuenf}.

\begin{lem}\mylabel{loestheozweiterOdivrot}
Let $s\in[1,\infty)\ohne\pI$ and $\tau>\max\{0,s-N/2\}$\,. Then
\begin{align*}
&\Abb{{}_{\pdiv}\Delta^q_{s-2}}{D({}_{\pdiv}\Delta^q_{s-2})}{\bDqsom}{E}{\pdiv\mu^\me\rot E}\qquad,\\
&\Abb{{}_{\rot}\Delta^{q+1}_{s-2}}{D({}_{\rot}\Delta^{q+1}_{s-2})}{\bRqpesom}{H}{\rot\eps^\me\pdiv H}
\end{align*}
are continuous and surjective Fredholm operators on their domains of definition
\begin{align*}
D({}_{\pdiv}\Delta^q_{s-2})&:=\big(\X^q_{s-2}(\Omega)\boxplus\eta\calD^q(\pcIbq{\leq1}{s-2})\boxplus\eta\check{\calD}^{q,2}_{s-2}\big)\cap\dqlocnom\qquad,\\
D({}_{\rot}\Delta^{q+1}_{s-2})&:=\big(\Y^{q+1}_{s-2}(\Omega)\boxplus\eta\calR^{q+1}(\pcJbqpe{\leq1}{s-2})\boxplus\eta\check{\calR}^{q+1,2}_{s-2}\big)\cap\ronqpelocnom
\end{align*}
with kernels $N({}_{\pdiv}\Delta^q_{s-2})=\dhqom$ and $N({}_{\rot}\Delta^{q+1}_{s-2})=\dhqpeom$\,.
\end{lem}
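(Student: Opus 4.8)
The plan is to read the second-order operators as compositions of the first-order Maxwell operators already inverted in Section~\ref{statopweight}, imitating the proofs of Lemma~\ref{gewstatikdivmu} and Lemma~\ref{gewstatikroteps}. I discuss only ${}_{\pdiv}\Delta^q_{s-2}=\pdiv\mu^\me\rot$; the operator ${}_{\rot}\Delta^{q+1}_{s-2}=\rot\eps^\me\pdiv$ is treated by the verbatim dual argument (or via the Hodge star), so it is omitted. The decisive structural observation, exactly as for the first-order operators, is that the domain encodes the two ``missing'' equations: the intersection with $\dqlocnom$ forces $\pdiv E=0$, while the defining condition of $\X^q_{s-2}(\Omega)$ guarantees $\mu^\me\rot E\in\dqpeom{s-1}$, hence $\pdiv\mu^\me\rot E\in\Lzqom{s}$.

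\textbf{Well-definedness, continuity, kernel.} Since $\pdiv\pdiv=0$, the image is automatically solenoidal, and as a divergence it is orthogonal to $\dhqom$ by Corollary~\ref{dirichletsenkrecht}; together with the range characterisation of Corollary~\ref{gewstatikWertebzwei} this places the image in $\bDqsom$. On the finitely many tower-parts $\eta\calD^q(\pcIbq{\leq1}{s-2})\boxplus\eta\check{\calD}^{q,2}_{s-2}$ one checks with Remark~\ref{intbarkeitTuerme} and $\tau>\max\{0,s-N/2\}$ that $\pdiv\mu^\me\rot$ yields square-integrable (indeed compactly supported) forms, so continuity reduces to continuity on $\X^q_{s-2}(\Omega)$ plus the finite dimension of the tower-part. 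For the kernel let $\pdiv\mu^\me\rot E=0$ and set $H:=\mu^\me\rot E$; then $\pdiv H=0$, while $\rot\mu H=\rot\rot E=0$ together with $\mu H=\rot E\in\ronqpenom{}$ (the tangential trace is pushed forward, $\gt\rot=\Rot\gt$) show that $H\in\dH{q+1}{}{\mu^\me}(\Omega)$ is a Dirichlet form, integrable by Lemma~\ref{intdirichleteps}. But $\mu H=\rot E$ lies in the range of $\rot$, which by the Helmholtz decomposition \paulytimeharmlzzerl\ is orthogonal to the Dirichlet forms; a Dirichlet form in this orthogonal complement vanishes, so $\rot E=0$. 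Combined with $\pdiv E=0$ from the domain, $E\in\ronqnom{}\cap\dqnom{}=\dhqom$, and the reverse inclusion is trivial.

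\textbf{Surjectivity (the main obstacle) and Fredholm property.} Given $F\in\bDqsom$, I first solve $\pdiv H=F$ with the surjective operator $\sideset{_\mu}{^{q+1}_{s-1}}\DIV$ of Lemma~\ref{gewstatikdivmu}; membership of $H$ in its domain already delivers $\rot\mu H=0$. It remains to solve $\rot E=\mu H$ by $\sideset{_\eps}{^q_{s-2}}\ROT$ of Lemma~\ref{gewstatikroteps} at the shifted weight, and this is where the real work lies: by Remark~\ref{gewstatikWertebeins} the range $\bRom{q+1}{s-1}{0}$ equals $\ronqpenom{s-1}\cap\dhqpeom^\bot$, so one must ensure that $\mu H$ is not only irrotational but also orthogonal to $\dhqpeom$. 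I secure this using the freedom to add to $H$ an element of $N(\sideset{_\mu}{^{q+1}_{s-1}}\DIV)=\mu^\me\dH{q+1}{}{\mu^\me}(\Omega)$, which alters $\mu H$ by an arbitrary element of $\dH{q+1}{}{\mu^\me}(\Omega)$. Since $\dim\dH{q+1}{}{\mu^\me}(\Omega)=\dim\dhqpeom=d^{q+1}$ by Corollary~\ref{dimdirichleteps}, and the $L^2$-projection $\dH{q+1}{}{\mu^\me}(\Omega)\to\dhqpeom$ is injective (a nonzero kernel element would be $\mu^\me$-orthogonal to itself by \paulytimeharmlzzerl), this projection is an isomorphism, so the $\dhqpeom$-component of $\mu H$ can be annihilated; thereafter Lemma~\ref{gewstatikroteps} produces $E$ with $\rot E=\mu H$ and $\pdiv E=0$ (again built into the $\ROT$-domain). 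The residual bookkeeping—the weight shift $s-1\rightsquigarrow s-2$ and the one-floor index shift ${}_1(\,\cdot\,)$ carrying the height-$0$ towers $\pcJbqpe{0}{s-1}$, $\check{\calR}^{q+1,1}_{s-1}$ of the $\DIV$-solve into the height-$\le1$ towers $\pcIbq{\leq1}{s-2}$, $\check{\calD}^{q,2}_{s-2}$ of the stated domain—is the same routine computation performed in Lemmas~\ref{verallstatikdiv} and~\ref{verallstatikrot}. Finally, $\dhqom$ is finite-dimensional by the {\sf MLCP} and the operator is surjective, so it is Fredholm; the details are only sketched and deferred to \paulydissabschnittsechsfuenf.
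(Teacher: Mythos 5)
Your reduction of ${}_{\pdiv}\Delta^q_{s-2}$ to two first-order solves works for the middle ranks, but it breaks down precisely where this lemma is nontrivial, namely at $q=N-2$ (and, for ${}_{\rot}\Delta^{q+1}_{s-2}$, at $q=1$; for $N=3$ these coincide). The intermediate solution $H$ produced by Lemma \ref{gewstatikdivmu} lies in a domain containing $\eta\check{\calR}^{q+1,1}_{s-1}$, which for $q=N-2$ is spanned by the Coulomb-type form $\eta\,\turmd{N-1}{1}{0}{1}{-}$. This form has nonvanishing integral over the spheres $\p U_r$, whereas any form of the type $\rot E$ has vanishing integral over $\p U_r$ (Stokes on the closed manifold $\p U_r$, using $\iota^*\rot E=\Rot\iota^* E$). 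Hence, if the coefficient $c$ of this component of $\mu H$ is nonzero, the equation $\rot E=\mu H$ has no solution at all, and no tower ansatz can help: there is \emph{no} form whose rotation equals $\turmd{N-1}{1}{0}{1}{-}$. In particular, your claim that the bookkeeping carries $\check{\calR}^{q+1,1}_{s-1}$ into $\check{\calD}^{q,2}_{s-2}$ by the one-floor shift is false: the identity $\rot D^q_{{}_1J}=R^{q+1}_J$ holds for the regular towers only, while $\check{\calD}^{q,2}_{s-2}=\{0\}$ for $q=N-2$ unless $N=3$, and even then $\check{D}^{1,2}_{s-2}=\turmr{1}{1}{0}{1}{-}$ is irrotational, so its rotation is certainly not the Coulomb form. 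Your surjectivity argument therefore needs, as a prior step, a proof that $c=0$. Such a proof exists \big(since $\mu H\in\ronqpelocnom$ is irrotational in all of $\Omega$ with vanishing tangential trace, an approximation/Stokes argument gives zero integral over $\p U_r$; the regular towers integrate to zero by orthogonality of the spherical eigenforms, and the $\Lzqpeom{s-1}$-part has $r$-independent integral tending to zero along a sequence of radii, because $s>N/2$ whenever the exceptional form is present\big), but it is absent from your proposal, and it is exactly the content the appendix is supposed to supply: dismissing it as ``routine bookkeeping'' begs the question. The dual obstruction ($\turmr{1}{1}{0}{1}{-}$ is not a divergence) breaks the composition for ${}_{\rot}\Delta^{q+1}_{s-2}$ at $q=1$ in the same way.

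There is also a latent circularity: you invoke Lemmas \ref{verallstatikdiv} and \ref{verallstatikrot}, but their exceptional-form-free conclusions are themselves proved ``utilizing the appendix'', i.e.\ the very lemma you are proving; only Lemmas \ref{gewstatikdiv}--\ref{gewstatikroteps} and Theorem \ref{gewstatikinjektivrotdivspeziell} are available at this point. The paper avoids both problems by proving the second-order lemma directly, repeating the whole-space/cut-off/regularity machinery of sections \ref{statopweight} and \ref{iteration} for the operator $\pdiv\mu^\me\rot$ itself, with the $K=2$ exceptional forms built into the domain from the outset (details in \paulydissabschnittsechsfuenf); only afterwards are the exceptional-form-free first-order statements deduced from the second-order ones, not the other way around. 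Your continuity and kernel arguments are essentially sound — note only that the second solve must be performed with $\eps=\id$, since the operator involves the plain divergence, and that the Dirichlet form in the kernel argument is $\mu H\in\dH{q+1}{}{\mu^\me}(\Omega)$ rather than $H$ — but as written the surjectivity proof does not go through.
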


\begin{lem}\mylabel{keineAusnahmeformenF}
Let $s\in[1,\infty)\ohne\pI$ and $\cI$ be a finite subset of
the index set $\cIq$ with maximal degree of homogeneity $\homg{}{\cI}{}$\,, such that
$\eta\calD^q(\cI)\cap\Lzqsom=\{0\}$ holds. Furthermore, let $\tau>\max\{0,s-N/2\}$\,, $\tau\geq-s$
and $\tau>s+N/2+\homg{}{\cI}{}$\,.
Then for every form $F\in\bDqs(\cI,\om)$ with
$$F=F_s+\sum_{I\in\cI}{\tt f}_I\cdot\eta D^q_I\qqtext{,}F_s\in\Lzqsom\qqtext{,}{\tt f}_I\in\cz$$
there exists a form
$$E\in\X^q_{s-2}(\Omega)\boxplus\eta\calD^q(\pcIbq{\leq1}{s-2}\cup\cIz)\boxplus\eta\check{\calD}^{q,2}_{s-2}$$
solving $\pdiv\mu^\me\rot E=F$\,. Such an $E$ may be represented by
$$E=E_{s-2}+\tilde{E}+\sum_{I\in\cI}{\tt f}_I\cdot\eta D^q_{{}_2I}\qquad,$$
where $E_{s-2}\in\X^q_{s-2}(\Omega)$ and $\tilde{E}\in\eta\calD^q(\pcIbq{\leq1}{s-2})\boxplus\eta\check{\calD}^{q,2}_{s-2}$\,.
\end{lem}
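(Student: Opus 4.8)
The plan is to reduce the second-order equation $\pdiv\mu^\me\rot E = F$ to the surjectivity of the operator ${}_{\pdiv}\Delta^q_{s-2}$ from Lemma \ref{loestheozweiterOdivrot} by peeling off the tower-part of $F$ with an explicit ansatz. The shift relations for the tower-forms give $\rot D^q_{{}_2I}=R^{q+1}_{{}_1I}$ and $\pdiv R^{q+1}_{{}_1I}=D^q_I$, hence $\pdiv\rot D^q_{{}_2I}=D^q_I$. Motivated by this I would set
$$E:=e+\sum_{I\in\cI}{\tt f}_I\cdot\eta D^q_{{}_2I}\qquad,$$
with $e\in D({}_{\pdiv}\Delta^q_{s-2})$ still to be determined, so that applying $\pdiv\mu^\me\rot$ to the sum reproduces the leading term $\sum_I{\tt f}_I\,\eta D^q_I$ of $F$ up to an $\Lzqsom$-remainder.

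First I would expand $\pdiv\mu^\me\rot(\eta D^q_{{}_2I})$ using the commutator formulas for $C_{\rot,\eta}$ and $C_{\pdiv,\eta}$ together with $\mu^\me=\id+(\mu^\me-\id)$. Writing the inner derivative as $\rot(\eta D^q_{{}_2I})=\eta R^{q+1}_{{}_1I}+C_{\rot,\eta}D^q_{{}_2I}$ and then applying $\pdiv$ to $\mu^\me$ times this, the distinguished contribution $\pdiv(\eta R^{q+1}_{{}_1I})=\eta D^q_I+C_{\pdiv,\eta}R^{q+1}_{{}_1I}$ splits off, leaving $\eta D^q_I$ plus a remainder $R_I$. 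That remainder collects the commutator pieces $C_{\pdiv,\eta}R^{q+1}_{{}_1I}$ and $\pdiv(C_{\rot,\eta}D^q_{{}_2I})$, which are compactly supported and hence lie in $\Lzqvoxom\subset\Lzqsom$, and the genuinely decaying term $\pdiv\big((\mu^\me-\id)\eta R^{q+1}_{{}_1I}\big)$ (plus a further compactly supported piece). The main obstacle is to control this last term: by Remark \ref{intbarkeitTuerme} the tower-form $R^{q+1}_{{}_1I}$ has homogeneity degree $\homd(R^{q+1}_{{}_1I})=\homg{}{I}{}+1$, the perturbation $\mu^\me-\id$ gains a factor decaying like $r^{-\tau}$, and $\pdiv$ costs one power of $r$, so the term decays like $r^{\homg{}{I}{}-\tau}$. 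This belongs to $\Lzqsom$ precisely when $\tau>s+N/2+\homg{}{I}{}$, which is guaranteed by the hypothesis $\tau>s+N/2+\homg{}{\cI}{}$ since $\homg{}{I}{}\le\homg{}{\cI}{}$ for every $I\in\cI$. Thus $R_I\in\Lzqsom$.

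With this in hand I would define
$$f:=F-\sum_{I\in\cI}{\tt f}_I\cdot\pdiv\mu^\me\rot(\eta D^q_{{}_2I})\qquad,$$
so that the leading terms $\sum_I{\tt f}_I\,\eta D^q_I$ cancel against those of $F$ and $f=F_s-\sum_I{\tt f}_I R_I\in\Lzqsom$. I would then verify $f\in\bDqsom$: using Corollary \ref{gewstatikWertebzwei} it suffices to check $\pdiv f=0$ and $f\perp\bonqom$. The first holds because $\pdiv\pdiv=0$ annihilates all correction terms and $\pdiv F=0$; the second holds because every subtracted form is supported in $\supp\eta$, disjoint from the supports of $\bonqom$, while $F\perp\bonqom$ as $F\in\bDqs(\cI,\om)\subset\bDqlocom$. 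Since $f\in\bDqsom$ and ${}_{\pdiv}\Delta^q_{s-2}$ is surjective onto $\bDqsom$ by Lemma \ref{loestheozweiterOdivrot}, there is $e\in D({}_{\pdiv}\Delta^q_{s-2})$ with $\pdiv\mu^\me\rot e=f$. Then $E=e+\sum_I{\tt f}_I\,\eta D^q_{{}_2I}$ satisfies $\pdiv\mu^\me\rot E=f+(F-f)=F$, and decomposing $e=E_{s-2}+\tilde{E}$ along the direct sum defining $D({}_{\pdiv}\Delta^q_{s-2})$ gives the asserted representation with $E_{s-2}\in\X^q_{s-2}(\Omega)$ and $\tilde{E}\in\eta\calD^q(\pcIbq{\leq1}{s-2})\boxplus\eta\check{\calD}^{q,2}_{s-2}$, so that indeed $E\in\X^q_{s-2}(\Omega)\boxplus\eta\calD^q(\pcIbq{\leq1}{s-2}\cup\cIz)\boxplus\eta\check{\calD}^{q,2}_{s-2}$.
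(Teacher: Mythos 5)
Your proposal is correct and follows exactly the route the paper prescribes: the paper omits the detailed proof, stating only that these appendix lemmas follow ``using the same ideas and techniques'' as the first-order results, and your argument is precisely that adaptation --- the ansatz $E=e+\sum_I{\tt f}_I\,\eta D^q_{{}_2I}$ peeling off the tower part via $\pdiv\rot D^q_{{}_2I}=D^q_I$ (mirroring the ansatz \eqref{Hansatz}-style construction in Lemma \ref{verallstatikdiv}), with the perturbation term $\pdiv\big((\mu^\me-\id)\eta R^{q+1}_{{}_1I}\big)$ absorbed into $\Lzqsom$ by the hypothesis $\tau>s+N/2+\homg{}{\cI}{}$, and the residual datum $f$ placed in $\bDqsom$ via Corollary \ref{gewstatikWertebzwei} so that Lemma \ref{loestheozweiterOdivrot} applies. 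The support-disjointness argument for $f\perp\bonqom$ and the use of $\pdiv\pdiv=0$ for the solenoidality of $f$ are exactly the mechanisms the paper's first-order proofs rely on, so there is nothing to correct.
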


\begin{lem}\mylabel{keineAusnahmeformenG}
Let $q\neq0$\,, $s\in[1,\infty)\ohne\pI$ and $\cJ$ be a finite subset of $\cJqpe$ with maximal degree of homogeneity $\homg{}{\cJ}{}$\,, such that
$\eta\calR^{q+1}(\cJ)\cap\Lzqpesom=\{0\}$ holds. Furthermore, let $\tau>\max\{0,s-N/2\}$\,, $\tau\geq-s$
and $\tau>s+N/2+\homg{}{\cJ}{}$\,.
Then for every form $G\in\bRqpes(\cJ,\om)$ with
$$G=G_s+\sum_{J\in\cJ}{\tt g}_J\cdot\eta R^{q+1}_J\qqtext{,}G_s\in\Lzqpesom\qqtext{,}{\tt g}_J\in\cz$$
there exists a form
$$H\in\Y^{q+1}_{s-2}(\Omega)\boxplus\eta\calR^{q+1}(\pcJbqpe{\leq1}{s-2}\cup\cJz)\boxplus\eta\check{\calR}^{q+1,2}_{s-2}$$
solving $\rot\eps^\me\pdiv H=G$\,. Such a $H$ may be represented by
$$H=H_{s-2}+\tilde{H}+\sum_{J\in\cJ}{\tt g}_J\cdot\eta R^{q+1}_{{}_2J}\qquad,$$
where $H_{s-2}\in\Y^{q+1}_{s-2}(\Omega)$ and $\tilde{H}\in\eta\calR^{q+1}(\pcJbqpe{\leq1}{s-2})\boxplus\eta\check{\calR}^{q+1,2}_{s-2}$\,.
\end{lem}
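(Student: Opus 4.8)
The plan is to imitate exactly the reduction used for the first-order operators in Section~\ref{iteration}: peel off the tower-part of the data by an explicit ansatz and then invoke the second-order solution theory of Lemma~\ref{loestheozweiterOdivrot}. Since the hypothesis $\eta\calR^{q+1}(\cJ)\cap\Lzqpesom=\{0\}$ forces $\cJ_s=\cJ$, I write $G=G_s+\sum_{J\in\cJ}{\tt g}_J\cdot\eta R^{q+1}_J$ with $G_s\in\Lzqpesom$ and make the ansatz
$$H:=h+\sum_{J\in\cJ}{\tt g}_J\cdot\eta R^{q+1}_{{}_2J}\qquad,$$
so that the asserted representation will hold automatically once $h$ is produced in $D({}_{\rot}\Delta^{q+1}_{s-2})$.

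The first key step is the computation of $\rot\eps^\me\pdiv(\eta R^{q+1}_{{}_2J})$. Using the tower relations $\pdiv R^{q+1}_{{}_2J}=D^q_{{}_1J}$ and $\rot D^q_{{}_1J}=R^{q+1}_J$, the commutator identities of Remark~\ref{TurmBemFuenf}, and the splitting $\eps^\me=\id+(\eps^\me-\id)$ (recall that $\eps^\me$ is again $\tau$-$\pc{1}$-admissible), I obtain
$$\rot\eps^\me\pdiv(\eta R^{q+1}_{{}_2J})=\eta R^{q+1}_J+C_{\rot,\eta}D^q_{{}_1J}+\rot\big(\eps^\me C_{\pdiv,\eta}R^{q+1}_{{}_2J}\big)+\rot\big((\eps^\me-\id)\,\eta D^q_{{}_1J}\big)\qquad.$$
The two commutator contributions have compact support (they live where $\eta$ is non-constant), hence lie in $\Lzqpesom\cap\bqpeom^\bot$ since those supports avoid the supports of $\bqpeom$. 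The genuinely delicate term is the last one: $D^q_{{}_1J}$ is homogeneous of degree $\homg{}{J}{}+1$ and $\eps^\me-\id$ decays with order $\tau$, so after one differentiation this term decays with order $\homg{}{J}{}-\tau$, which lies in $\Lzqpesom$ exactly when $\tau>s+N/2+\homg{}{J}{}$ (the integrability criterion underlying Remark~\ref{intbarkeitTuerme}); the hypothesis $\tau>s+N/2+\homg{}{\cJ}{}$ guarantees this uniformly for all $J\in\cJ$.

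Consequently the genuine tower contributions $\eta R^{q+1}_J$ cancel exactly against the data, and
$$\tilde{G}:=G-\sum_{J\in\cJ}{\tt g}_J\cdot\rot\eps^\me\pdiv(\eta R^{q+1}_{{}_2J})\in\Lzqpesom\qquad.$$
As a $\rot$-image $\tilde{G}$ is irrotational, and since every subtracted term is either supported in $\supp\eta$ or equal to $G_s$, it is orthogonal to $\bqpeom$; here the restriction $q\neq0$ enters, ensuring that $\bqpeom$ (defined only for ranks $\neq1$) is available and that $\bRqpesom=\ronqpesnom\cap\bqpeom^\bot$ by Corollary~\ref{gewstatikWertebzwei}. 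Thus $\tilde{G}\in\bRqpesom$, and Lemma~\ref{loestheozweiterOdivrot} furnishes $h\in D({}_{\rot}\Delta^{q+1}_{s-2})=\big(\Y^{q+1}_{s-2}(\Omega)\boxplus\eta\calR^{q+1}(\pcJbqpe{\leq1}{s-2})\boxplus\eta\check{\calR}^{q+1,2}_{s-2}\big)\cap\ronqpelocnom$ with $\rot\eps^\me\pdiv h=\tilde{G}$. Writing $h=H_{s-2}+\tilde{H}$ accordingly and noting that $\cJz$ consists of heights $\geq2$ whereas $\pcJbqpe{\leq1}{s-2}$ has heights $\leq1$ (so the sum stays direct), the assembled $H$ solves $\rot\eps^\me\pdiv H=G$ and has precisely the stated form and membership. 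I expect the main obstacle to be the homogeneity bookkeeping for the perturbation term, namely verifying that the single decay condition $\tau>s+N/2+\homg{}{\cJ}{}$ is exactly what pushes all correction terms into $\Lzqpesom$; the remaining work — irrotationality and $\bqpeom$-orthogonality of $\tilde{G}$, and directness of the final decomposition — is routine once Lemma~\ref{loestheozweiterOdivrot} is available.
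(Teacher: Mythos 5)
Your proof is correct and coincides with the approach the paper intends: the paper omits the proof of this lemma, remarking only that it follows by ``the same ideas and techniques'' as the first-order theory of Sections \ref{statopweight} and \ref{iteration}, and your argument is precisely that---the second-order analogue of the proof of Lemma \ref{verallstatikdiv}, with the shifted ansatz $H=h+\sum_{J\in\cJ}{\tt g}_J\cdot\eta R^{q+1}_{{}_2J}$ and the surjectivity of ${}_{\rot}\Delta^{q+1}_{s-2}$ (Lemma \ref{loestheozweiterOdivrot}) playing the role that Theorem \ref{gewstatikinjektivrotdivspeziell} plays there. Your key verifications (the tower relations $\pdiv R^{q+1}_{{}_2J}=D^q_{{}_1J}$, $\rot D^q_{{}_1J}=R^{q+1}_J$; the decay hypothesis $\tau>s+N/2+\homg{}{\cJ}{}$ forcing $\rot\big((\eps^\me-\id)\,\eta D^q_{{}_1J}\big)\in\Lzqpesom$; and the identification $\bRqpesom=\ronqpesnom\cap\bqpeom^\bot$ from Corollary \ref{gewstatikWertebzwei}, which is exactly where $q\neq0$ enters) are the same points the paper's first-order proofs handle in the same way.
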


Now we can show easily that the special forms $\eta\check{D}^{1,1}_{s-1}$ and $\eta\check{R}^{N-1,1}_{s-1}$ do not appear
in Theorem \ref{gewstatikinjektivrotdivnull}, Lemma \ref{verallstatikdiv} or Lemma \ref{verallstatikrot} (in the cases
$q=1$ and $q=N-2$). Since these forms can only occur for weights $s\geq N/2$\,,
we can apply the latter two lemmas (for these $s$) getting some $\EHs$ and obtain the unique solutions $E$ of $\rot E=G$ and $H$ of $\pdiv H=F$ by
\begin{align*}
E&:=\eps^\me\pdiv\tilde{H}\\
&\,\in\,\Big(\big(\ronqom{s-1}\cap\eps^\me\dqom{s-1}\big)\boxplus\eta\calD^q(\pcIbq{0}{s-1}\cup\cJe)\Big)\cap\eps^\me\bDqlocom\qquad,\\
H&:=\mu^\me\rot\tilde{E}\\
&\,\in\,\Big(\big(\mu^\me\ronqpeom{s-1}\cap\dqpeom{s-1}\big)\boxplus\eta\calR^{q+1}(\pcJbqpe{0}{s-1}\cup\cIe)\Big)\cap\mu^\me\bRqpelocom\qquad.
\end{align*}

\begin{acknow}
This research was supported by the {\it Deutsche Forschungsgemeinschaft}
via the project {\sf `We 2394: Untersuchungen der Spektralschar verallgemeinerter
Maxwell-Operatoren in unbeschr\"ankten Gebieten'}.

The author is particularly indebted to his academic teachers Norbert Weck and Karl-Josef Witsch 
for introducing him to the field.
\end{acknow}

\end{document}